\crefname{hypothesis}{Hypothesis}{Hypotheses}
\title{From habitat decline to collapse: a spatially explicit approach connecting habitat degradation to destruction\thanks{
\funding{YS was partially supported by an NSERC Doctoral Fellowship and by Postdoctoral Fellowship (NSERC Grant PDF-578181-2023). ZS was partially supported by a start-up grant from the University of Alberta, NSERC RGPIN-2018-04371 and NSERC DGECR-2018-00353. HW was partially supported by an NSERC Individual Discovery Grant RGPIN-2020-03911 and an NSERC Discovery Accelerator Supplement
Award RGPAS-2020-00090 as well as a Tier 1 Canada Research Chair Award.}}}
\author{Yurij Salmaniw\thanks{Mathematical Institute, University of Oxford, Woodstock Road, Oxford, OX2 6GG, United Kingdom (\email{yurij.salmaniw@maths.ox.ac.uk})}
\and
Zhongwei Shen\thanks{Department of Mathematical and Statistical Sciences, University of Alberta, Edmonton, Canada, T6G 2G1 (\email{zhongwei@ualberta.ca})}
\and
Hao Wang\thanks{Department of Mathematical and Statistical Sciences, University of Alberta, Edmonton, Canada, T6G 2G1(\email{hao8@ualberta.ca})}
}
\newtheorem{thm}{Theorem}[section]
\newtheorem{lem}[thm]{Lemma}
\newtheorem{cor}[thm]{Corollary}
\newtheorem{prop}[thm]{Proposition}
\newtheorem{defn}{Definition}
\newtheorem{assum}{Assumption}
\def \a    {\alpha}
\def \D    {\Delta}
\def \e    {\varepsilon}
\def \i    {\infty}
\def \l    {\lambda}
\def \oo {\"{o}}
\def \O    {\Omega}
\def \p    {\partial}
\def \s    {\sigma}
\def \Ind {\mathbbm{1}}
\DeclareMathOperator*{\supp}{supp}
\newcommand{\as}[1]{\left\vert#1\right\vert}
\newcommand{\magg}[1]{\left\vert#1\right\vert ^{2}}
\newcommand{\norm}[1]{\left\Vert#1\right\Vert}
\newcommand{\normm}[1]{{\left\vert\kern-0.25ex\left\vert\kern-0.25ex\left\vert #1 
    \right\vert\kern-0.25ex\right\vert\kern-0.25ex\right\vert}}
\newcommand{\beginc}{\begin{center}}
\newcommand{\cend}{\end{center}}
\newcommand{\eq}[1]{\begin{align*}#1\end{align*}}
\newcommand{\eql}[2]{\begin{align}\label{#2}#1\end{align}}
\newcommand{\grad}{\nabla}
\newcommand{\nn}{\nonumber}
\begin{document}

\maketitle

\begin{abstract}
Habitat loss, driven primarily by anthropogenic activity, significantly threatens ecosystem sustainability. While it is well understood that habitat loss is the leading contributor to declines in biodiversity worldwide, the connection between habitat degradation, destruction, and different locomotion strategies remains unclear. We use a reaction-diffusion framework to analyze the effects of habitat loss on population persistence and abundance. We establish necessary and sufficient conditions for the existence of an extinction threshold, beyond which further degradation of the environment predicts deterministic extirpation. Our results offer a robust analytical connection between habitat degradation and destruction, providing a mechanistic understanding of species persistence under varying environmental conditions and differing locomotion strategies.
\end{abstract}

\begin{keywords}
habitat loss, habitat degradation, habitat destruction, extinction threshold, extinction debt, global dynamics, monotone dynamical systems, eigenvalue problems
\end{keywords}

\begin{MSCcodes}
35K57, 92D40, 35B40, 92B99, 49K20
\end{MSCcodes}

\section{Introduction}

Habitat loss, primarily driven by human activities such as agriculture, urbanization, and resource extraction, poses significant threats to biodiversity and ecosystem sustainability \cite{Heinrichs2016,Fischer2007,Pimm2014,Diaz2019}. These activities result in the degradation and destruction of natural habitats, leading to species extinction or displacement, facilitating the invasion of more aggressive species, and reducing recolonization abilities \cite{Diaz2019,Chase2020}. The societal impacts of habitat loss include declines in ecosystem services, climate regulation, and political stability \cite{myers1993ultimate}, as well as economic disruptions and increased refugee movements \cite{Balmford2002,myers1993environmental}.

While nature's intrinsic value is undeniable \cite{Ehrlich1981}, our dependence on ecosystem services \cite{Daily1997} underscores the importance of investigating various forms of habitat loss, including degradation, destruction, and fragmentation, to mitigate negative long term consequences. A primary goal of the present work is to establish a robust connection between habitat degradation and destruction in a mechanistic framework, and to understand the role species-specific traits play in population persistence. To compliment our primary goal, we are also interested in understanding the timescales at which the impacts of habitat loss are realized.

\subsection{Motivation \& key concepts}\label{sec:intro}
\hspace{1cm}\\
\noindent\textbf{Disentangling landscape changes:} As a general concept, habitat loss includes combined processes of habitat degradation, destruction, and fragmentation. Realistically, these components are closely intertwined, making it difficult to separate the relative impacts each hold \cite{Laurance1}. This has led to much debate on whether concepts such as fragmentation are ambiguous or even meaningless \cite{Laurance1, Fahrig2017}, whether fragmentation is beneficial or detrimental to biodiversity \cite{fletcher2018habitat, fahrig2019habitat}, and more recently over which habitat configurations (e.g., ``\textit{single large or several small" or ``\textit{single large AND several small}"}) might be optimal to maintain biodiversity \cite{Fahrig2017, SZANGOLIES2022}. Mechanistic modelling offers an approach that allows one to more easily isolate confounding factors, providing significantly more insight into when we should expect particular landscape changes to be more or less detrimental to the local population(s) considered. Here, we focus on two key questions: 
\begin{enumerate}[i.)]
    \item What is the connection between habitat \textit{degradation} and habitat \textit{destruction}?
    \item What role do species-specific traits play in population persistence, abundance, and biodiversity? 
\end{enumerate}
Connected to each of these is a question of the timescales over which impacts are realized. We seek to answer these questions in an ecologically meaningful way, while retaining the ability to isolate different aspects of habitat loss. To this end, we first review some key concepts to be used in the development of our model.

\noindent\textbf{The habitat question:} Habitat degradation refers to processes that decrease habitat quality, such as moderate pollution or selective logging \cite{Heinrichs2016}. In contrast, habitat destruction occurs when habitat alterations prevent species from sustaining themselves, such as heavy pollution or through clear-cut logging \cite{Laurance1}. Precise definitions of these differing but intimately related concepts are crucial for modeling these ecological processes accurately, and to appropriately interpret the results obtained \cite{Salmaniw2022,hall1997habitat,Fahrig2003,Fischer2007}. Importantly, we observe that the concept of habitat is implicit in our understanding of its loss.

In this work we define \textit{habitat} as the resources and conditions that support the occupancy, survival, and reproduction of a given organism. Habitat is organism-specific; it relates the presence of a species, population, or individual (animal or plant) to an area’s physical and biological characteristics \cite{hall1997habitat}. \textit{Habitat degradation} refers generally to any process that diminishes habitat quality \cite{Heinrichs2016, Laurance1}, while \textit{habitat destruction} occurs when such alterations render a habitat incapable of supporting its original species \cite{Laurance1}. Rather than viewing a habitat as degraded OR destroyed, we opt for understanding these concepts as belonging to a continuous spectrum with an intact habitat at one end, and a destroyed habitat at the other.
$$
\Large
\xrightarrow[\text{\large State of the habitat under increased degradation}]{\text{\normalsize intact} \hspace{3cm} \text{\normalsize degraded} \hspace{3cm} \text{\normalsize destroyed}}
$$

\noindent\textbf{Species-specific traits:} To fully understand habitat loss requires acknowledging that habitat is specific to the traits of each organism, making mechanistic models that incorporate species-specific traits essential. Therefore, habitat itself encompasses more than just vegetation type or structure; it represents ``the sum of the specific resources that are needed by organisms [to survive and reproduce]"\cite{hall1997habitat}. Identical landscapes can serve as habitats for some species but not for others. Meaningful investigation of habitat loss should consider species-specific mechanistic models alongside empirical data, which can provide more robust insights given the high cost and extrapolation limitations of data collection.

Motivated by such considerations, this work has two primary objectives: first, to establish a robust analytical connection between habitat degradation and destruction within a spatially explicit framework; second, to develop a model that addresses timescales and rates of convergence in time-dependent problems involving habitat alterations. Explicit in our model is a consideration of species-specific traits through local intrinsic growth rates and different locomotion strategies. Understanding habitat loss as a transition from degradation to destruction requires examining both spatial and temporal scales, and the relative impacts of different forms of habitat loss \cite{Heinrichs2016, Chase2020}. The notion of an extinction debt \cite{Tilman1994} further motivates this study by highlighting the delayed effects of habitat loss, which can lead to extinction generations after initial habitat alteration \cite{Pimm2000}. Thus, it is crucial to comprehend both the immediate and long-term effects of habitat loss and the timescales over which these processes operate \cite{Pimm2000}.

\subsection{Model formulation}\label{sec:intro_2}

We extend the framework of \cite{Salmaniw2022} by considering habitat destruction as a limiting case of habitat degradation.  This perspective will allow a precise and robust analytical connection between habitat degradation and destruction. We first consider a spatially heterogeneous intrinsic growth rate to describe population growth when a subregion of the habitat experiences some level of degradation. The landscape $\O \subset \mathbb{R}^{N}$ ($N\geq1$) is thus partitioned into two subregions, $B$ and $\Omega\setminus B$, where $B$ denotes the degraded region and $\Omega\setminus B$ the undisturbed region. Population growth in $\O\setminus B$ follows a logistic-type functional response $f(x, u)$, while in B the population declines at a constant rate $c \geq 0$. The functional response over $\Omega$ is succinctly written as $\Ind_{\O \setminus B}(x) f(x,u) - c \Ind_B (x) u$,  where $\Ind_K(x)$ is the indicator function of a set $K\subset\mathbb{R}^{N}$.

In \cite{Salmaniw2022}, the form $f(x,u) = u(1-u)$ was used as a prototypical growth term for a habitat degradation model with a zero-flux (homogeneous Neumann) boundary condition along $\p \O$. We generalize this degradation model as follows:
\begin{equation}\label{scalareqn-2}
\begin{cases}
u_t = d \D u + \Ind_{\O\setminus B} f(x,u) - c \Ind_{B} u, & \text{in}\quad \O\times (0,\i), \\
\frac{\p u}{\p \nu } = 0, &\text{on}\quad \p \O \times (0,\i),
\end{cases}
\end{equation}
where $\p / \p \nu$ denotes the outward facing unit normal vector, and $B \subset \O$ and $f$ are assumed to respectively satisfy Assumptions \ref{assumptionA}-\ref{assumptionf} (see Subsection \ref{subsec-assumption}). Different from \cite{Salmaniw2022}, this allows for the possibility of heterogeneity in the undisturbed region $\O \setminus B$. 

We then formulate a habitat \textit{destruction} problem as
\begin{equation}\label{scalareqn-1}
    \begin{cases}
u_t = d \D u + f(x,u), & \text{in}\quad \O \setminus \overline{B} \times (0,\infty), \\
\frac{\p u}{\p \nu } = 0, & \text{on}\quad  \p \O \times (0,\infty), \\
u = 0, & \text{on}\quad \p B \times (0,\infty).
\end{cases}
\end{equation}
Here, the habitat destruction problem is described by a reaction-diffusion equation with a homogeneous Neumann boundary condition on the outer boundary $\partial \Omega$ and a homogeneous Dirichlet boundary condition along $\partial B$, representing hostile regions within the undisturbed region $\Omega$. The solution to problem \eqref{scalareqn-1} serves as the limit candidate as $c \rightarrow +\infty$ in the habitat degradation problem \eqref{scalareqn-2}.


\subsection{Main results}

It is well-known (see, e.g., \cite{Hess1991, Zhao2017}) that the principal spectral theory of eigenvalue problems associated with the linearization of \eqref{scalareqn-2} and \eqref{scalareqn-1} about their respective trivial states play a crucial role in understanding their long-term dynamics. We introduce them here and refer the reader to the Supplementary Materials \ref{appendix-1} for a more general consideration of related concepts and results.

The eigenvalue problem associated with the linearization of \eqref{scalareqn-2} about the trivial state reads
\begin{equation}\label{MainEig2-2}
\begin{cases}
d \D \phi +  m_{c} \phi + \mu \phi = 0 ,& \text{in}\quad \O ,  \\
\frac{\p \phi}{\p \nu } = 0 , &\text{on}\quad \p \O,
\end{cases}
\end{equation}
where $m_{c} := \Ind_{\O \setminus B}f_u (\cdot,0) - c \Ind_{B}$. Since $m_{c}\in L^{\infty}(\Omega)$ (see Assumption \ref{assumptionf}), Proposition \ref{deglineig-app} applies to \eqref{MainEig2-2} for each $c$ fixed. We say that a problem has a \emph{principal eigenvalue} if it has a positive eigenfunction. Denote by $\mu_{1,c}$ the principal eigenvalue of \eqref{MainEig2-2}, and by $\phi_{1,c}$ its eigenfunction. 

The eigenvalue problem associated with the linearization of \eqref{scalareqn-1} about the trivial state reads
\begin{equation}\label{MainEig1-2}
\begin{cases}
d \D \phi +  f_u (\cdot,0)\phi + \mu \phi = 0 ,& \text{in}\quad \O \setminus \overline{B},  \\
\frac{\p \phi}{\p \nu } = 0 ,& \text{on}\quad \p \O , \\
\phi = 0 ,&  \text{on}\quad \p B. 
\end{cases}
\end{equation}
As $f_u (\cdot,0)\in L^{\infty}(\Omega\setminus\overline{B})$ (see Assumption \ref{assumptionf}), Proposition \ref{deglineig-1-app} applies to \eqref{MainEig1-2}. Denote by $\mu_{1,\infty}$ the principal eigenvalue of \eqref{MainEig1-2}, and by $\phi_{1,\infty}$ its eigenfunction.

Our first result connects the principal eigenpairs of \eqref{MainEig2-2} and \eqref{MainEig1-2} as $c\to\infty$.

\begin{thm}\label{convergence-thm-2}
The following hold.
\begin{enumerate}[\rm(1)]
\item The function $c\mapsto\mu_{1,c}$ is strictly increasing on $(0,\infty)$, and $\lim_{c \to \i}\mu_{1,c} =\mu_{1, \infty}$.

\item $\lim_{c \to \i}\phi_{1,c} = \phi_{1,\infty}$ in $H^1 (\O)$ under the normalization \\ $\|\phi_{1,c}\|_{L^{2}(\Omega)} = \|\phi_{1,\infty}\|_{L^{2}(\O\setminus \overline{B})} = 1$. 
\end{enumerate}
\end{thm}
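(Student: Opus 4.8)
The plan is to treat the degradation parameter $c$ as a penalization parameter that forces $\phi_{1,c}$ to vanish on $B$ in the limit, exploiting the variational characterization of both principal eigenvalues. Recall that $\mu_{1,c}$ admits the Rayleigh quotient representation
\begin{equation*}
\mu_{1,c} = \inf_{0\neq\varphi\in H^1(\Omega)} \frac{d\int_\Omega |\nabla\varphi|^2 + c\int_B \varphi^2 - \int_{\Omega\setminus B} f_u(\cdot,0)\varphi^2}{\int_\Omega \varphi^2},
\end{equation*}
while $\mu_{1,\infty}$ has the analogous characterization with the infimum taken over $\varphi\in H^1(\Omega\setminus\overline{B})$ with $\varphi=0$ on $\partial B$ (equivalently, over $\varphi\in H^1(\Omega)$ supported in $\overline{\Omega\setminus B}$ and vanishing on $\partial B$). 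For monotonicity in part (1), I would note that the numerator is pointwise strictly increasing in $c$ for any admissible $\varphi$ not identically zero on $B$; since the principal eigenfunction $\phi_{1,c}$ cannot vanish on the positive-measure set $B$ (it is strictly positive in $\Omega$ by the strong maximum principle / Harnack inequality for the equation it solves), evaluating the quotient for $\mu_{1,c'}$ at $\varphi=\phi_{1,c}$ with $c'>c$ gives $\mu_{1,c'}<$ that quotient $=\mu_{1,c}$ after the strict increase, yielding strict monotonicity.

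For the limit in part (1), monotonicity gives $\lim_{c\to\infty}\mu_{1,c}=:\mu_*\le\mu_{1,\infty}$, the upper bound because every competitor $\varphi$ admissible for $\mu_{1,\infty}$ (extended by zero) is admissible for $\mu_{1,c}$ with $\int_B\varphi^2=0$, so $\mu_{1,c}\le$ (the $\mu_{1,\infty}$-quotient of $\varphi$) for all $c$. For the reverse inequality I would use the normalized eigenfunctions: test \eqref{MainEig2-2} against $\phi_{1,c}$ to obtain $d\|\nabla\phi_{1,c}\|_{L^2(\Omega)}^2 + c\int_B\phi_{1,c}^2 = \mu_{1,c} + \int_{\Omega\setminus B}f_u(\cdot,0)\phi_{1,c}^2 \le C$ uniformly in $c$ (using $\|\phi_{1,c}\|_{L^2}=1$, boundedness of $f_u(\cdot,0)$, and the already-established upper bound on $\mu_{1,c}$). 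Hence $\{\phi_{1,c}\}$ is bounded in $H^1(\Omega)$ and $\int_B\phi_{1,c}^2\le C/c\to0$. Extracting a weakly convergent subsequence $\phi_{1,c}\rightharpoonup\phi_*$ in $H^1(\Omega)$, strongly in $L^2(\Omega)$ (Rellich), we get $\|\phi_*\|_{L^2(\Omega)}=1$, $\phi_*\ge0$, and $\int_B\phi_*^2=0$, so $\phi_*$ vanishes a.e.\ on $B$; since $\phi_*\in H^1(\Omega)$ and vanishes on $B$, its restriction lies in the admissible class for $\mu_{1,\infty}$ (here I would invoke the characterization of the Dirichlet space on $\Omega\setminus\overline{B}$, using Assumption \ref{assumptionA} on $\partial B$ to ensure $H^1$ functions vanishing on $B$ restrict to the correct Dirichlet-type space; this is the step I expect to require the most care). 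Weak lower semicontinuity of the gradient term then gives, passing to the limit in the Rayleigh quotient evaluated at $\phi_{1,c}$,
\begin{equation*}
\mu_{1,\infty} \le \frac{d\int_{\Omega\setminus B}|\nabla\phi_*|^2 - \int_{\Omega\setminus B}f_u(\cdot,0)\phi_*^2}{\int_{\Omega\setminus B}\phi_*^2} \le \liminf_{c\to\infty}\mu_{1,c} = \mu_*,
\end{equation*}
so $\mu_*=\mu_{1,\infty}$ and $\phi_*=\phi_{1,\infty}$ by uniqueness of the normalized principal eigenfunction; since every subsequence has a further subsequence with this same limit, the full net converges.

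For part (2), upgrading weak to strong $H^1$ convergence, I would show $\|\nabla\phi_{1,c}\|_{L^2(\Omega)}^2\to\|\nabla\phi_{1,\infty}\|_{L^2(\Omega\setminus\overline B)}^2$: from the energy identity above, $d\|\nabla\phi_{1,c}\|_{L^2}^2 = \mu_{1,c} + \int_{\Omega\setminus B}f_u(\cdot,0)\phi_{1,c}^2 - c\int_B\phi_{1,c}^2$; the first term converges to $\mu_{1,\infty}$, the second to $\int_{\Omega\setminus B}f_u(\cdot,0)\phi_{1,\infty}^2$ by strong $L^2$ convergence, and the last term is nonnegative and $o(1)$ — but to conclude I also need it to not lose mass, which follows because the limiting energy identity for $\phi_{1,\infty}$ reads $d\|\nabla\phi_{1,\infty}\|_{L^2(\Omega\setminus\overline B)}^2 = \mu_{1,\infty} + \int_{\Omega\setminus B}f_u(\cdot,0)\phi_{1,\infty}^2$, forcing $c\int_B\phi_{1,c}^2\to0$ and hence $\|\nabla\phi_{1,c}\|_{L^2}\to\|\nabla\phi_{1,\infty}\|_{L^2}$. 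Combined with weak convergence in $H^1$ and strong convergence in $L^2$, this gives norm convergence, hence strong convergence in $H^1(\Omega)$ (with $\phi_{1,\infty}$ extended by zero to $B$). The main obstacle throughout is the careful identification of the limit function's membership in the Dirichlet energy space associated with \eqref{MainEig1-2}, i.e.\ that an $H^1(\Omega)$ function vanishing a.e.\ on $B$ genuinely belongs to (the zero-extension of) $H^1_0$-type space on $\Omega\setminus\overline B$ with the mixed boundary condition — this is where the regularity hypotheses on $B$ in Assumption \ref{assumptionA} enter decisively.
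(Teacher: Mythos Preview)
Your proposal is correct and follows essentially the same route as the paper: variational characterization, uniform $H^1$ bounds and $\int_B\phi_{1,c}^2\to 0$ from the energy identity, weak/strong compactness, identification of the limit in $H^1_B(\Omega)$ (which the paper sets up in the preliminaries exactly to handle the point you flag), and norm convergence of gradients for the strong $H^1$ upgrade. One small slip to fix: in the monotonicity step you should test $R_c$ at $\phi_{1,c'}$ rather than $R_{c'}$ at $\phi_{1,c}$, so that $\mu_{1,c}\le R_c(\phi_{1,c'})<R_{c'}(\phi_{1,c'})=\mu_{1,c'}$; as written your inequality chain gives $\mu_{1,c'}\le R_{c'}(\phi_{1,c})>\mu_{1,c}$, which does not close.
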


In a biological setting $-\mu_{1,c}$ can be used to describe the average growth rate of the population for small population sizes \cite{CantrellCosner2003}, and so Theorem \ref{convergence-thm-2} suggests the intuitive insight that degrading the habitat affects the population growth rate in a monotonic way. In fact, we are able to prove the following.

\begin{thm}\label{convlem4}
Assume $\mu_{1,\i} < 0$. Then, \eqref{scalareqn-1} admits a unique positive steady state $u^*_{\infty}$ and \eqref{scalareqn-2} admits a unique positive steady state $u_c^*$ for all $c\gg1$. Moreover, 
$$
\lim_{c \to \i}u_c ^* = u^*_{\infty}\quad\text{in}\quad C( \overline{\O} ).
$$
\end{thm}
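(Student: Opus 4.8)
The plan is to combine standard logistic reaction--diffusion theory with a boundary-layer analysis at the interface $\p B$. First I would dispatch existence and uniqueness. By Theorem~\ref{convergence-thm-2}(1) the map $c\mapsto\mu_{1,c}$ is increasing with limit $\mu_{1,\i}<0$, so $\mu_{1,c}<0$ for all large $c$, and the nonlinearity $g_c(x,u):=\Ind_{\O\setminus B}f(x,u)-c\Ind_B u$ has the logistic structure of Assumption~\ref{assumptionf}: $g_c(x,0)=0$, its linearization at $0$ has negative principal eigenvalue $\mu_{1,c}$, $u\mapsto g_c(x,u)/u$ is strictly decreasing, and $g_c(x,u)\le 0$ for $u$ large. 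The classical sub--supersolution method together with a sliding argument then give the unique positive steady state $u^*_c$; the same applied to \eqref{scalareqn-1}, whose linearization at $0$ is \eqref{MainEig1-2} with $\mu_{1,\i}<0$, gives the unique positive $u^*_\i$. The strong maximum principle and the Hopf lemma yield $u^*_c>0$ on $\overline\O$, and $u^*_\i>0$ inside $\O\setminus\overline B$ with $|\p_\nu u^*_\i|>0$ on $\p B$ for a unit normal $\nu$.

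Next I would record the uniform estimates. Testing the steady equation for $u^*_c$ against the constant $M$ of Assumption~\ref{assumptionf} gives $0<u^*_c\le M$ on $\overline\O$ for all $c$; testing against $u^*_c$ gives $d\int_\O|\grad u^*_c|^2+c\int_B(u^*_c)^2\le C$ uniformly in $c$, so $\{u^*_c\}$ is bounded in $H^1(\O)$ and $\|u^*_c\|_{L^2(B)}=O(c^{-1/2})$. Since $g_c$ is nonincreasing in $c$, $u^*_{c_1}$ is a positive supersolution for the $c_2$-problem when $c_1<c_2$; comparing with the subsolution $\e\phi_{1,c_2}$ (admissible since $u^*_{c_1}>0$ on $\overline\O$) and using uniqueness shows $c\mapsto u^*_c$ is nonincreasing. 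For a lower bound I would extend $u^*_\i$ by zero to a function $v_\i\in C(\overline\O)$ and compute its distributional Laplacian across $\p B$: $d\D v_\i+\Ind_{\O\setminus B}f(\cdot,v_\i)-c\Ind_B v_\i=d\,|\p_\nu u^*_\i|\,\s_{\p B}\ge0$ in $\mathcal D'(\O)$, where $\s_{\p B}$ is surface measure on $\p B$; thus $v_\i$ is a weak subsolution of the $c$-problem for every $c$, and the sub--supersolution method (with supersolution $M$) plus uniqueness of $u^*_c$ force $u^*_c\ge v_\i$ for all $c$.

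The heart of the argument is an upper barrier capturing the boundary layer at $\p B$. Fixing $\eta>0$, let $w_\eta$ solve $d\D w+f(x,w)=0$ in $\O\setminus\overline B$, $\p_\nu w=0$ on $\p\O$, $w=\eta$ on $\p B$; then $u^*_\i\le w_\eta\le M$, and uniform $C^{1,\a}(\overline{\O\setminus\overline B})$ estimates together with uniqueness give $w_\eta\to u^*_\i$ uniformly as $\eta\to0$. Let $\z_{c,\eta}$ solve $d\D\z=c\z$ in $B$, $\z=\eta$ on $\p B$; then $0<\z_{c,\eta}\le\eta$, $\z_{c,\eta}\to0$ uniformly on compact subsets of $B$ (by comparison with $\eta e^{-\sqrt{c/d}\,\mathrm{dist}(\cdot,\p B)}$), and its normal derivative on $\p B$ is of order $\eta\sqrt{c/d}$. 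The glued function $\overline u_{c,\eta}:=w_\eta$ on $\O\setminus B$ and $\overline u_{c,\eta}:=\z_{c,\eta}$ on $\overline B$ is continuous, satisfies the respective equations off $\p B$, and---this is the step I expect to be the main obstacle---has a normal-derivative jump across $\p B$ whose sign, for $c$ large depending on $\eta$ (because the $B$-side slope $\sim\eta\sqrt{c/d}$ dominates the bounded $(\O\setminus B)$-side slope), makes $\overline u_{c,\eta}$ a weak supersolution of the $c$-problem. With $\e\phi_{1,c}$ ($\e$ small) as a subsolution below it, the sub--supersolution method and uniqueness of $u^*_c$ give $\e\phi_{1,c}\le u^*_c\le\overline u_{c,\eta}$ on $\overline\O$. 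Letting $c\to\i$ with $\eta$ fixed bounds $\limsup_{c\to\i}u^*_c$ by $w_\eta$ on $\O\setminus B$, by $0$ on compact subsets of $B$, and by $\eta$ on a thin collar of $\p B$; combining this with $u^*_c\ge v_\i$ and then sending $\eta\to0$ yields $u^*_c\to v_\i$ uniformly on $\overline\O$, i.e.\ $u^*_c\to u^*_\i$ in $C(\overline\O)$.

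The delicate part is thus concentrated entirely at the interface $\p B$, where the limit $c\to\i$ converts a Neumann-type degradation problem into a mixed Neumann--Dirichlet destruction problem and the discrepancy lives in an $O(c^{-1/2})$ boundary layer; verifying that the glued barrier $\overline u_{c,\eta}$ is genuinely a supersolution (the sign of the interface jump) is the step requiring real care, while the existence, uniqueness, a priori bounds, monotonicity in $c$, and sub--supersolution comparisons are routine for logistic reaction--diffusion equations.
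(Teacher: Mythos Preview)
Your proposal is correct, but it takes a genuinely different route from the paper's proof. The paper avoids the boundary-layer analysis entirely: after establishing (as you do) that $c\mapsto u_c^*$ is nonincreasing and bounded below by the zero extension of $u_\i^*$ (Lemma~\ref{monotonicallydecreasing-ss}), it simply lets $u^*:=\lim_{c\to\infty}u_c^*$ pointwise, uses the $H^1(\O)$ bound to pass to the limit in the weak formulation, shows $u^*=0$ a.e.\ in $B$ from $c\int_B(u_c^*)^2\le C$, and concludes $u^*\in H_B^1(\O)$ solves the destruction steady-state problem, hence $u^*=u_\i^*$ by uniqueness and elliptic regularity. Uniform convergence then drops out of Dini's theorem, since a monotone sequence of continuous functions converging pointwise to a continuous limit on a compact set does so uniformly.

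Your barrier construction $\overline u_{c,\eta}=w_\eta\cup\zeta_{c,\eta}$ is more hands-on: it bypasses weak compactness and Dini in favor of an explicit sandwich, and the interface-jump verification you flag as the main obstacle is indeed correct (the outward normal derivative of $\zeta_{c,\eta}$ on $\partial B$ scales like $\eta\sqrt{c/d}$ and eventually dominates the $c$-independent slope of $w_\eta$). The payoff is a more quantitative argument---the $O(c^{-1/2})$ boundary-layer width is visible, and one could in principle extract a convergence rate by tying $\eta$ to $c$---at the cost of substantially more work. The paper's approach is softer and shorter; yours is constructive. Note also that your lower bound $u_c^*\ge v_\i$ via the surface-measure distributional computation, while correct, is obtained in the paper by a direct comparison on $\O\setminus\overline B$ alone (since $u_\i^*=0<u_c^*$ on $\partial B$), which is simpler.
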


Note that $-\mu_{1,\infty}>0$ implies that the average population growth rate is positive for any level of degradation. We point out that in Theorem \ref{convlem4}, the case $\mu_{1,\infty}>0$ is of little interest as $0$ is the only steady state to \eqref{scalareqn-1} and therefore also to \eqref{scalareqn-2} for all $c\gg1$. 

\begin{thm}\label{finalscalarconvergence}
Assume that $\mu_{1, \i} \neq 0$. Let $u_c$ and $u_{\infty}$ be the unique solutions to problems \eqref{scalareqn-2} and \eqref{scalareqn-1}, respectively, with the initial data satisfying $0 \lneq u_{c}(\cdot,0)=u_{\infty}(\cdot,0) \in C_B ^1 (\overline{\O})$ and $\supp (u_\infty (\cdot,0)) \Subset \overline{\O} \setminus \overline{B}$. Then, 
$$
\lim_{c\to\infty}u_c = u_{\infty}\quad\text{uniformly in}\quad\overline{\O} \times [0,\i). 
$$
\end{thm}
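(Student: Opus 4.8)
The plan is: (i) show that $c\mapsto u_c$ is monotone and uniformly bounded, so it admits a pointwise monotone limit $\bar u$; (ii) identify $\bar u$ with the zero-extension of $u_\infty$ by a penalization/energy argument; and (iii) upgrade pointwise convergence to uniform convergence on $\overline{\O}\times[0,\i)$ by compactifying the time axis and invoking Dini's theorem. The genuinely non-routine point is step (iii): on the unbounded interval $[0,\i)$ no rate uniform in $c$ is available, and this is where the time-compactification trick does the work.

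For (i): since the initial datum is fixed and $f$ is of logistic type (Assumption~\ref{assumptionf}), the constant $M:=\max\{\|u_\infty(\cdot,0)\|_{L^\infty(\O)},K\}$, with $K$ such that $f(x,u)\le0$ for all $x$ and $u\ge K$, is a supersolution of \eqref{scalareqn-2} for every $c$, so $0\le u_c\le M$ on $\overline{\O}\times[0,\i)$ with $M$ independent of $c$. If $c'<c$, then $u_{c'}$ is a supersolution of the $u_c$-problem (the residual in the $u_c$-equation at $u_{c'}$ is $(c-c')\Ind_B u_{c'}\ge0$), so by the comparison principle $c\mapsto u_c$ is nonincreasing; hence $u_c\downarrow\bar u$ pointwise on $\overline{\O}\times[0,\i)$ with $0\le\bar u\le M$. (One may also check that the zero-extension of $u_\infty$ to $B$ is a subsolution of \eqref{scalareqn-2} — the jump of its normal derivative across $\partial B$ has the favourable sign — so it lies below every $u_c$, hence below $\bar u$; this is a useful check but is not needed for what follows.)

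For (ii): testing \eqref{scalareqn-2} with $u_c$ and integrating over $\O\times(0,T)$ yields, using the uniform $L^\infty$ bound,
\[
d\int_0^T\!\!\int_\O|\nabla u_c|^2\,dx\,dt \;+\; c\int_0^T\!\!\int_B u_c^2\,dx\,dt \;\le\; C_T
\]
with $C_T$ independent of $c$. Thus $\{u_c\}$ is bounded in $L^2(0,T;H^1(\O))$ and $\int_0^T\!\int_B u_c^2=O(1/c)\to0$, so $\bar u=0$ a.e.\ on $B\times(0,\i)$; since $\bar u(\cdot,t)\in H^1(\O)$ vanishes on $B$ for a.e.\ $t$, its restriction to $\O\setminus\overline{B}$ has zero trace on $\partial B$. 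Passing to the limit in the weak formulation of \eqref{scalareqn-2} tested against functions supported in $(\overline{\O}\setminus\overline{B})\times[0,T)$ — the penalization term vanishes identically, $u_c\to\bar u$ strongly in $L^2_{\mathrm{loc}}$ by dominated convergence, $\nabla u_c\rightharpoonup\nabla\bar u$, and $f(\cdot,u_c)\to f(\cdot,\bar u)$ by continuity and dominated convergence — shows that $\bar u$ restricted to $\O\setminus\overline{B}$ is a weak solution of \eqref{scalareqn-1} with the prescribed initial datum (the Neumann condition on $\partial\O$ being recovered by letting the test functions be nonzero near $\partial\O$). By uniqueness for \eqref{scalareqn-1}, $\bar u|_{\O\setminus\overline{B}}=u_\infty$; that is, $\bar u$ is exactly the zero-extension of $u_\infty$.

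Step (iii): with $s=t/(1+t)$, a homeomorphism of $[0,\i]$ onto $[0,1]$, set $v_c(x,s):=u_c(x,s/(1-s))$ for $s<1$ and $v_c(x,1):=u_c^\star(x)$, where, since $\mu_{1,\i}\neq0$ and $\mu_{1,c}\to\mu_{1,\i}$ (Theorem~\ref{convergence-thm-2}), for all $c$ large the global dynamics of \eqref{scalareqn-2} give $u_c(\cdot,t)\to u_c^\star$ in $C(\overline{\O})$ as $t\to\i$, with $u_c^\star=u_c^*$ when $\mu_{1,\i}<0$ and $u_c^\star=0$ when $\mu_{1,\i}>0$; similarly $u_\infty(\cdot,t)\to u_\infty^\star$. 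Then $v_c\in C(\overline{\O}\times[0,1])$, and likewise the zero-extension of $u_\infty$, written in the variable $s$, lies in $C(\overline{\O}\times[0,1])$ with value the zero-extension of $u_\infty^\star$ at $s=1$. By (i)--(ii), $v_c$ is nonincreasing in $c$ with pointwise limit $\bar u$ on $\overline{\O}\times[0,1)$; at $s=1$, Theorem~\ref{convlem4} together with monotonicity of $c\mapsto u_c^*$ (and trivially when $\mu_{1,\i}>0$) gives $u_c^\star\downarrow u_\infty^\star$ in $C(\overline{\O})$. Hence $v_c$ decreases pointwise to a \emph{continuous} limit on the \emph{compact} set $\overline{\O}\times[0,1]$, and Dini's theorem yields uniform convergence there; undoing the change of variables gives $u_c\to u_\infty$ uniformly on $\overline{\O}\times[0,\i)$.
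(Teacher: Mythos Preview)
Your steps (i)--(ii) are essentially the content of the paper's Lemma~\ref{convcor1} (monotonicity, energy bounds, identification of the pointwise limit with the zero-extension of $u_\infty$ on each finite slab, then Dini on $\overline{\O}\times[0,T]$); your sketch of (ii) skips the time-derivative estimate the paper uses to place $\bar u$ in $H^1(Q_T)$ for the uniqueness argument, but this is a routine gap you could fill either by that estimate or by citing uniqueness of bounded weak solutions.

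The genuine novelty is your step (iii), and it is correct. The paper handles the tail $(T,\infty)$ by proving a \emph{uniform-in-$c$ exponential rate} of convergence of $u_c$ to $u_c^*$ (Lemma~\ref{uniconvscalar}), which in turn requires showing that the principal eigenvalue of the linearization at $u_c^*$ stays bounded away from zero as $c\to\infty$ --- a nontrivial spectral continuity argument. Your time-compactification bypasses all of this: once you know (a) each $u_c(\cdot,t)\to u_c^\star$ in $C(\overline{\O})$ as $t\to\infty$ (Theorem~\ref{globaldegradation}), (b) $u_\infty(\cdot,t)\to u_\infty^\star$ in $C(\overline{\O}\setminus B)$ (Theorem~\ref{globaldestruction}), (c) $u_c^\star\downarrow u_\infty^\star$ (Theorem~\ref{convlem4} and Lemma~\ref{monotonicallydecreasing-ss}, or trivially when $\mu_{1,\infty}>0$), and (d) $c\mapsto u_c$ is nonincreasing (Lemma~\ref{monotonicallydecreasing}), the functions $v_c$ are continuous on the compact set $\overline{\O}\times[0,1]$, decrease to a continuous limit, and Dini finishes the job. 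What you lose is any quantitative information: the paper's route yields the explicit exponential decay rate appearing in Corollary~\ref{cor:exthresh}(3), whereas your argument is purely qualitative. What you gain is a considerably shorter proof of the theorem as stated, and one that would transfer more easily to settings where explicit rates are hard to obtain.
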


\begin{remark} Convergence in each of these results holds in the sense that solutions to the degradation problem converge uniformly in $\overline{\O} \setminus B$ to the solution of the associated destruction problem while converging uniformly to zero in the set $\overline{B}$. Therefore, we identify the solutions to the destruction problems with their (continuous) extension by zero in the set $B$. This convention is assumed throughout the remainder of this paper.
\end{remark}

In proving Theorem \ref{finalscalarconvergence}, one of the major difficulties is the uniform convergence on a large time interval $(T,\i)$, independent of $c$. This challenge is overcome proving first a uniform convergence result in an arbitrary time interval $(0,T]$, followed by a careful use of asymptotic stability results for the degradation and destruction problems (see Theorems \ref{globaldegradation} and \ref{globaldestruction}), which are shown to hold in a uniform sense with respect to parameter $c$ when $\mu_{1,\infty} \neq 0$. 

We note that the restriction on the initial data is a technical one. To achieve uniform results across the parameter $c$, we need control over the comparison between the initial data and the corresponding eigenfunction. While such restrictions are not required for any $c\geq0$ fixed, they become challenging to manage at the boundary of set $B$ for arbitrary $c$. 

\subsection{Applications}\label{sec:apps}

\noindent\textbf{General trends:} Theorems \ref{convlem4}-\ref{finalscalarconvergence} establish a direct connection between habitat degradation and destruction, providing a complete answer to question i.) introduced earlier. These theorems show that arbitrary levels of degradation do not necessarily displace local species—even in the limit as $c \to +\infty$, the population may persist. However, as the area of the degraded/destroyed region increases, the chances of survival decrease due to the monotonicity of the principal eigenvalue with respect to subregions of destroyed habitat (see Proposition \ref{existthm-app}(i) and Proposition \ref{deglineig-1-app}(iv)). Moreover, this connection is ordered in an intuitive way: given solutions $u_c$, $c>0$, with the same initial data chosen from $C^{+}(\overline{\Omega})\setminus\{0\}$, $c_2 > c_1$ implies that $u_{c_2}< u_{c_1}$ in $\overline{\O} \times (0,\i)$. (see Lemma \ref{monotonicallydecreasing}). This monotonic behavior is insightful in conjunction with empirical studies on habitat loss, whose results can sometimes be difficult to interpret, e.g., when it is sometimes observed that habitat modification is \textit{beneficial} to certain populations \cite{fletcher2018habitat, fahrig2019habitat}. Our results suggest that any positive effect a local species may experience in a region of altered habitat must come from another process, such as that of fragmentation or species-species interactions, and may be influenced more directly by more complicated factors not explicitly included here, such as an \textit{edge effect} \cite{Fahrig2013, Ewers2007}. 

\noindent\textbf{Tipping points:} A tipping point, typically used ``\textit{loosely as a metaphor for the phenomenon that, beyond a certain threshold, runaway change propels a system to a new state}"\cite{VANNES20169}, is an increasingly important concept in ecology. Of particular interest is the identification of so-called \textit{early warning signals} that precede such tipping points, though it has been acknowledged that truly generic warning signals are unlikely to exist \cite{Boettiger2013}. One possible remedy, particularly in the absence of good data and controlled replicates, is to study transitions specific to real systems, and to ``\textit{model the expected behaviour of a stable ecosystem}" \cite{Boettiger2012, Boettiger2013}. As a Corollary of our main results, we deduce necessary and sufficient conditions for the existence of an extinction threshold for problem \eqref{scalareqn-2} with respect to the parameter $c$ in the following sense. Let $u_c(x,t)$ denote the unique solution to problem \eqref{scalareqn-2} with initial data $u_c(\cdot, 0) \in C^+(\Omega) \setminus {0}$.
\begin{defn}\label{def-extinction-threshold}
    We call $c_0 \in (0, \infty)$ an \textup{extinction threshold} for problem \eqref{scalareqn-2} if there holds
    $$
\liminf_{t \to \infty} u_c(\cdot,t)\gneq  0, \quad\forall\,  0< c < c_0,
    $$
    while
    $$
\lim_{t \to \infty} u_c(\cdot,t) = 0, \quad \forall\, c > c_0.
    $$
\end{defn}
\noindent Put simply, an extinction threshold provides a single value beyond which deterministic extinction of the population is predicted. Therefore, assuming population persistence is the desired outcome, increasing the size of the extinction threshold is of particular interest. For model \eqref{scalareqn-2} we have the following result.
\begin{cor}\label{cor:exthresh}
Problem \eqref{scalareqn-2} admits an extinction threshold $c_{0}\in(0,\infty)$ if and only if $\mu_{1,\infty} > 0$. Moreover, in the case of $\mu_{1,\infty} > 0$, the following hold.
\begin{itemize}
    \item[\rm(1)] $c_0 > \frac{1}{\as{B}} \int_{\Omega \setminus B} f_u (x,0) {\rm d}x$.

    \item[\rm(2)] Denote by $u_{c}$ the unique solution to problem \eqref{scalareqn-2} with initial data $u_{c}(\cdot,0)\in C^{+}(\overline{\Omega})\setminus\{0\}$. Then,
    $$
\lim_{t\to\infty}u_{c}(\cdot,t)=\begin{cases}
    u_{c}^{*},&\text{if}\quad 0<c<c_0,\\
    0,&\text{if}\quad c>c_0
\end{cases}
\quad\text{in}\quad C(\overline{\Omega}).
$$

\item[\rm(3)] Suppose that $u_{c}(\cdot,0)$ is independent of $c$. Then, for any $\tilde c >c_0$, there exist $r = r(\tilde c)>0$ and $M>0$ (depending only on $\tilde{c}$ and the common initial data) such that
$$
 \sup_{c \geq \tilde c} \norm{u_c(\cdot,t)}_{C(\overline{\Omega})} \leq M e^{-r t},\quad\forall t\geq0.
$$
\end{itemize}
\end{cor}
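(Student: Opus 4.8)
The plan is to deduce the whole statement from two already-available ingredients: the monotone, continuous dependence of the principal eigenvalue $\mu_{1,c}$ on $c$ (Theorem~\ref{convergence-thm-2} together with the perturbation theory behind Proposition~\ref{deglineig-app}), and the asymptotic stability dichotomy for \eqref{scalareqn-2} governed by the sign of $\mu_{1,c}$ (Theorem~\ref{globaldegradation}). First I would record the facts about $c\mapsto\mu_{1,c}$ that are needed: it is continuous and strictly increasing, with $\lim_{c\to\infty}\mu_{1,c}=\mu_{1,\infty}$, and $\mu_{1,0}<0$. The last of these follows by inserting the constant function into the variational characterization $-\mu_{1,0}=\sup\{\int_\Omega m_0\phi^2-d\int_\Omega|\nabla\phi|^2:\ \|\phi\|_{L^2(\Omega)}=1\}$, which gives $-\mu_{1,0}\ge|\Omega|^{-1}\int_{\Omega\setminus B}f_u(x,0)\,{\rm d}x>0$ under Assumption~\ref{assumptionf}.

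With these in hand, the dichotomy and part (2) are immediate. If $\mu_{1,\infty}\le0$, strict monotonicity gives $\mu_{1,c}<\mu_{1,\infty}\le0$ for every $c>0$, so by Theorem~\ref{globaldegradation} one has $u_c(\cdot,t)\to u_c^*\gneq 0$ in $C(\overline\Omega)$ for every nontrivial nonnegative datum, and no $c_0\in(0,\infty)$ can fulfil Definition~\ref{def-extinction-threshold}; this is the ``only if'' direction. If $\mu_{1,\infty}>0$, then since $\mu_{1,0}<0<\mu_{1,\infty}$ and $c\mapsto\mu_{1,c}$ is continuous and strictly increasing, there is a unique $c_0\in(0,\infty)$ with $\mu_{1,c_0}=0$, and $\mu_{1,c}<0$ for $0<c<c_0$ while $\mu_{1,c}>0$ for $c>c_0$. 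Applying Theorem~\ref{globaldegradation} once more, for $0<c<c_0$ every such solution converges in $C(\overline\Omega)$ to $u_c^*\gneq0$, and for $c>c_0$ every such solution converges in $C(\overline\Omega)$ to $0$; this simultaneously shows that $c_0$ is an extinction threshold and proves part (2).

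For part (1), I would use $\mu_{1,c_0}=0$ in Rayleigh-quotient form: $\max_{\phi\in H^1(\Omega)\setminus\{0\}}\frac{\int_\Omega m_{c_0}\phi^2-d\int_\Omega|\nabla\phi|^2}{\int_\Omega\phi^2}=0$. Testing with $\phi\equiv1$ gives $\int_{\Omega\setminus B}f_u(x,0)\,{\rm d}x-c_0|B|\le0$, i.e. $c_0\ge|B|^{-1}\int_{\Omega\setminus B}f_u(x,0)\,{\rm d}x$. If equality held, $\phi\equiv1$ would be a maximizer of the Rayleigh quotient, hence a principal eigenfunction of \eqref{MainEig2-2} with $c=c_0$; substituting into the equation forces $m_{c_0}\equiv0$ a.e. in $\Omega$, contradicting $m_{c_0}=-c_0<0$ on $B$ (recall $c_0>0$ and $|B|>0$). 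Hence the inequality is strict.

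For part (3), fix $\tilde c>c_0$, so $\mu_{1,\tilde c}>0$, and let $\phi_{1,\tilde c}$ be the corresponding principal eigenfunction, strictly positive on $\overline\Omega$ (Proposition~\ref{deglineig-app}). I would construct one supersolution valid simultaneously for all $c\ge\tilde c$: set $r:=\mu_{1,\tilde c}$, $A:=\|u_c(\cdot,0)\|_{C(\overline\Omega)}/\min_{\overline\Omega}\phi_{1,\tilde c}$ (using that $u_c(\cdot,0)$ is independent of $c$), and $\overline u(x,t):=Ae^{-rt}\phi_{1,\tilde c}(x)$, so that $\overline u(\cdot,0)\ge u_c(\cdot,0)$ and $\partial\overline u/\partial\nu=0$ on $\partial\Omega$. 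Using the eigenvalue equation \eqref{MainEig2-2} for $\phi_{1,\tilde c}$ and the sublinearity $f(x,s)\le f_u(x,0)s$ for $s\ge0$ (Assumption~\ref{assumptionf}), a direct computation shows the supersolution residual of $\overline u$ is $(\mu_{1,\tilde c}-r)\overline u=0$ on $\Omega\setminus B$ and $(c-\tilde c+\mu_{1,\tilde c}-r)\overline u=(c-\tilde c)\overline u\ge0$ on $B$, so $\overline u$ is a supersolution of \eqref{scalareqn-2} for every $c\ge\tilde c$. The comparison principle then yields $0\le u_c(x,t)\le Ae^{-rt}\phi_{1,\tilde c}(x)$ on $\overline\Omega\times[0,\infty)$, whence $\sup_{c\ge\tilde c}\|u_c(\cdot,t)\|_{C(\overline\Omega)}\le Me^{-rt}$ with $M:=A\|\phi_{1,\tilde c}\|_{C(\overline\Omega)}$ depending only on $\tilde c$ and the common initial datum. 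The points demanding the most care are establishing $\mu_{1,0}<0$ and the continuity of $c\mapsto\mu_{1,c}$ (so that $c_0$ truly lands in $(0,\infty)$), and checking that the single supersolution above is valid uniformly over $c\ge\tilde c$; the remaining content reduces to the asymptotic stability theory already established.
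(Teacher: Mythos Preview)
Your proof is correct and follows essentially the same route as the paper: both derive the existence and characterization of $c_0$ from the strict monotonicity and limit of $c\mapsto\mu_{1,c}$ (Theorem~\ref{convergence-thm-2}) together with the global dynamics dichotomy of Theorem~\ref{globaldegradation}, obtain part~(1) by testing the Rayleigh quotient with the constant function (the paper packages this step as Proposition~\ref{deglineig-app}(iv)), and prove part~(3) via the supersolution $M e^{-\mu_{1,\tilde c}t}\phi_{1,\tilde c}$. The one minor difference is how uniformity in $c$ is obtained in part~(3): the paper shows this function is a supersolution only for $c=\tilde c$ and then appeals to Lemma~\ref{monotonicallydecreasing} to pass to all $c\ge\tilde c$, whereas you verify directly that the same $\overline u$ is a supersolution of \eqref{scalareqn-2} for every $c\ge\tilde c$ (the extra term $(c-\tilde c)\overline u\ge0$ on $B$); your version is slightly more self-contained but otherwise equivalent.
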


Corollary \eqref{cor:exthresh} produces several key insights. First, by (1), we can always increase the extinction threshold $c_0$ by decreasing the size of the degraded region $B$, or by improving the habitat quality in the undisturbed region $\Omega \setminus B$. Less intuitively, perhaps, is the role of the species-specific intrinsic growth rate encoded in the term $f_u (x,0)$: the practicality of particular conservation efforts should align with expected success of the population in the remaining habitat regions. Second, by (2), we can identify an expected baseline of population abundance, allowing for a more robust analysis of the current state of a given population. For example, it is possible to measure a declining population abundance over time, but this alone does not provide insight into whether the population will \textit{continue} to decrease over time, or whether the population is expected to settle down at a new, perhaps lower, total abundance without observing extirpation. Finally, from (3), we are able to understand the timescales over which extirpation outcomes are realized. For cases considered here, we observe that the population is expected to decline exponentially; however, even an exponential rate of convergence can appear to be slow-moving if the rate $0< r \ll 1$ is particularly small. This is expected to be the case \textit{near} the extinction threshold $c_0$, where $r(\tilde c)$ is small for values near $c_0$.

\noindent\textbf{Species-specific traits and habitat fragmentation:} Recently, the modelling formulation used here and in \cite{Salmaniw2022} has been utilized as a mechanistic approach to understand the impacts of habitat fragmentation in a model ecosystem \cite{Zhang2024a, Zhang2024b}. We measured the total abundance of a population of \textit{C. Elegans} under differing habitat arrangements. We consider two strains, identical in their reproductive capabilities, but differing in their locomotion speeds. Regions of blank agar (region $B$ here) and blank agar with food (region $\Omega \setminus B$ here) were stamped onto a petridish, with a focus on the impact of fragmenting the environment. In \cite{Zhang2024a} a simplified one-dimensional model was used; in the forthcoming followup paper \cite{Zhang2024b}, we explore the impact of corridors, requiring a two-dimensional version of the model. Even with a simplistic approach, we observed that 1. habitat amount alone is insufficient to predict total abundance; 2. habitat fragmentation \textit{per se} \cite{Fahrig2017} has a significant impact on total abundance; 3. locomotion rates significantly affect the total abundance measured. In future efforts, we hope to explore 1. the impacts of destroyed habitat by including regions of copper (known to be toxic to \textit{C. Elegans}), and 2. to explore the impacts of competition between two strains with different locomotion strategies.

\begin{figure}
\centering
\begin{subfigure}{.475\textwidth}
  \centering
  \includegraphics[width=1.0\linewidth]{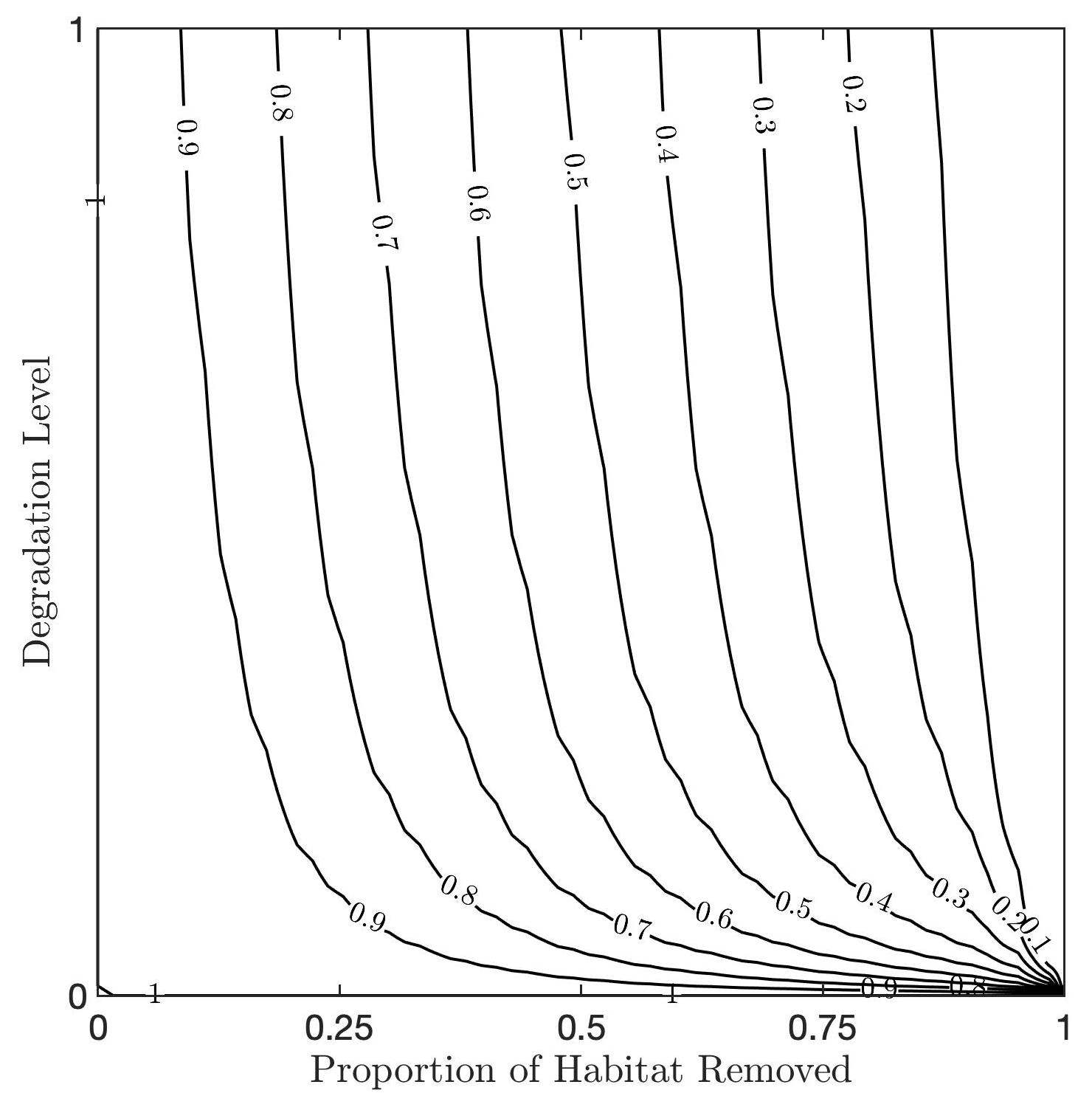}
  \caption{Diffusion fixed at $d = 1.0$.}
\end{subfigure}%
\begin{subfigure}{.475\textwidth}
  \centering
  \includegraphics[width=1.0\linewidth]{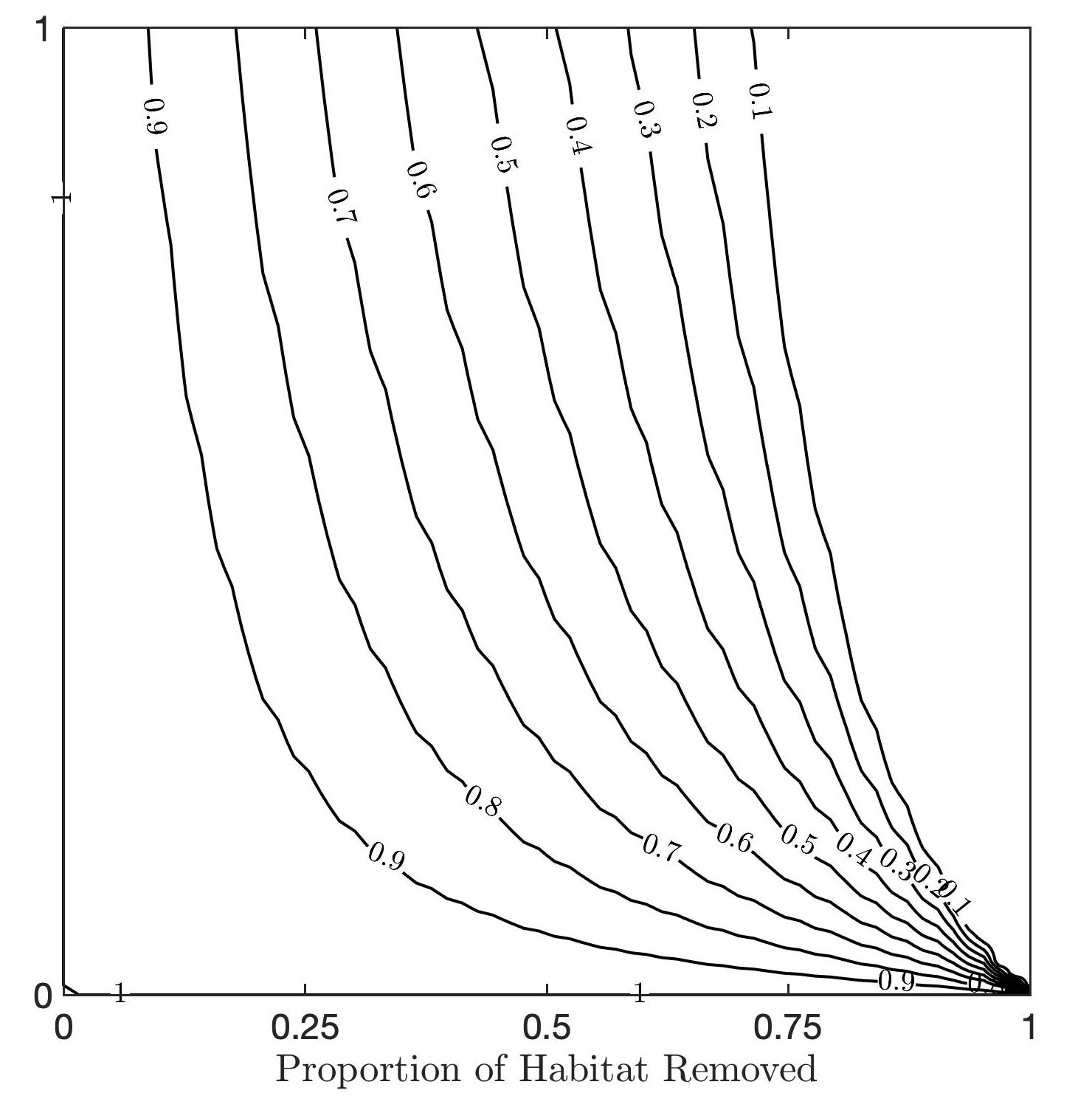}
  \caption{Diffusion fixed at $d=10.0$.}
\end{subfigure}\\
\begin{subfigure}{.475\textwidth}
  \centering
  \includegraphics[width=1.0\linewidth]{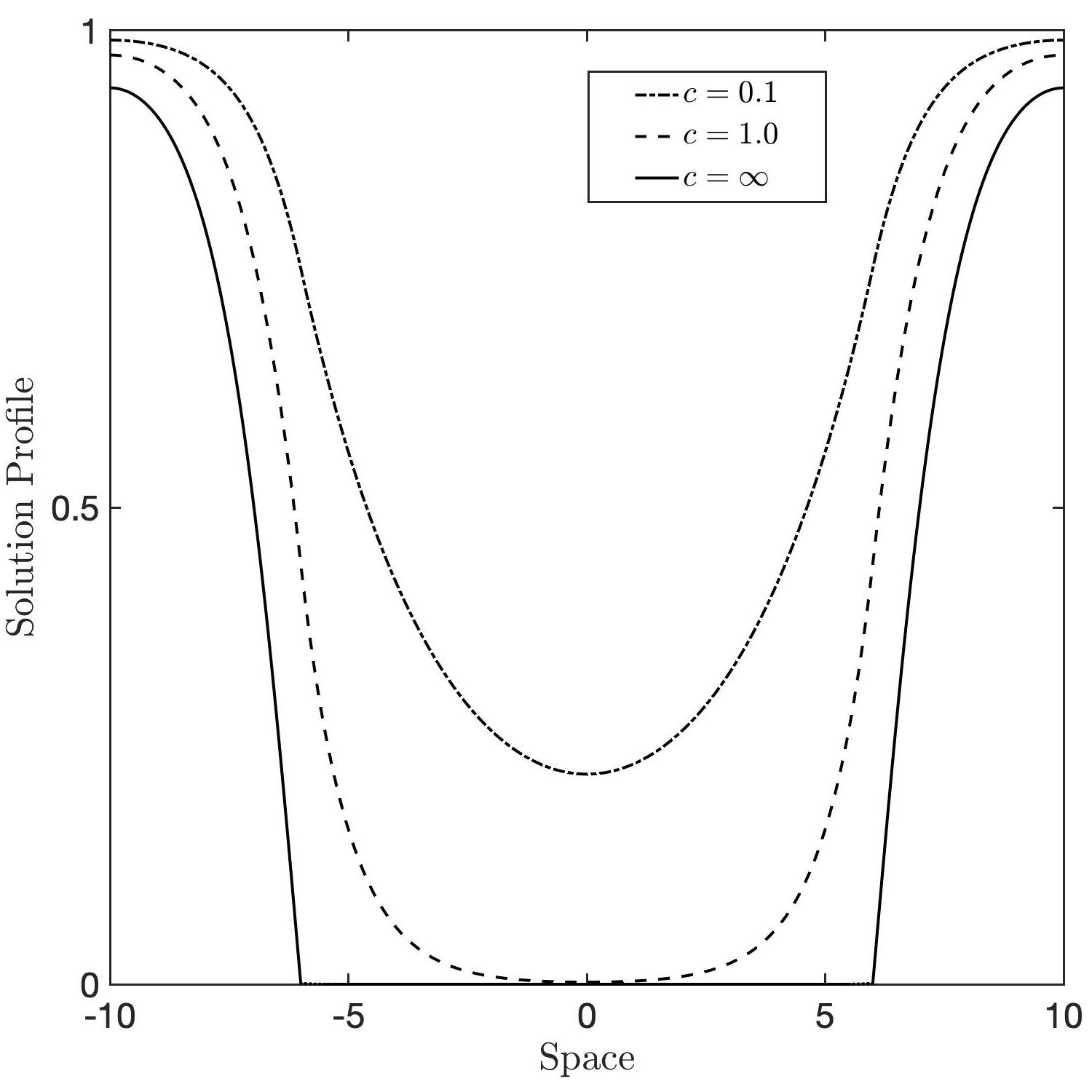}
  \caption{$60\%$ of habitat removed; no extinction threshold.}
\end{subfigure}%
\begin{subfigure}{.475\textwidth}
  \centering
  \includegraphics[width=1.0\linewidth]{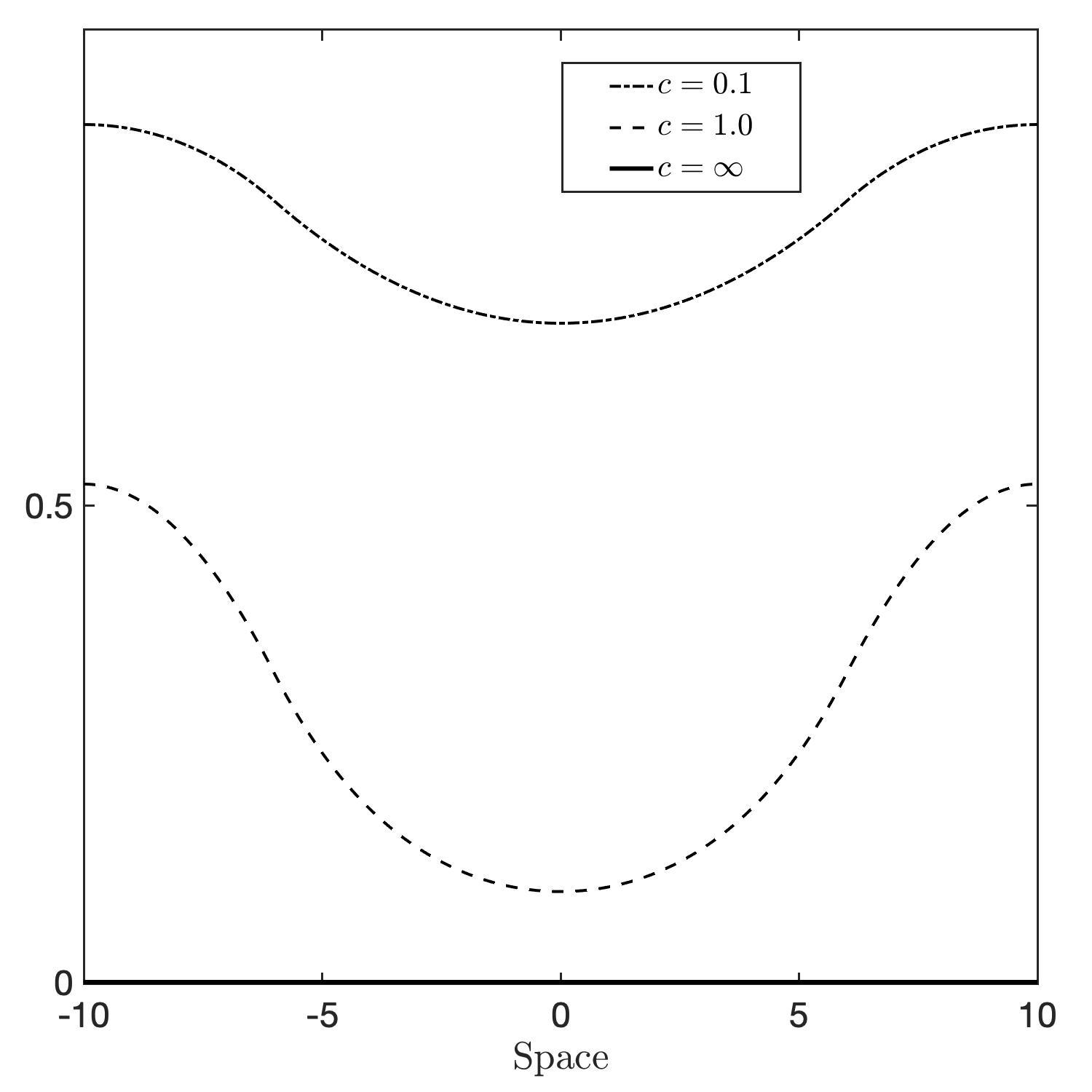}
  \caption{$60\%$ of habitat removed; extinction threshold $c_0 \approx 100$}
\end{subfigure}
\caption{Proportion of total population remaining at steady state (contour lines) after some proportion of the available habitat ($x$-axis) has been degraded at some level ($y$-axis). In this example, the domain is fixed to be $\Omega = (-10,10)$; growth in the undisturbed region is fixed at $1$. The degraded region is symmetric,  located at the centre of the domain.}
\label{fig:test}
\end{figure}

In Figure \ref{fig:test}, we illustrate our theoretical findings via numerical simulation. We consider a one dimensional landscape $\Omega = (-10,10)$ with a degraded interior $B = (-\delta,\delta)$ for $\delta \in (0,10)$. We assume logistic growth $f(x,u) = u(1-u)$ in the undisturbed region so that when $c=0$ (no environmental degradation), the population persists with density $1$ everywhere in $\Omega$. In panels (a)-(b), we plot the contour lines of the average population density at steady state (lying between $0$ and $1$) with respect to the proportion of habitat removed ($x$-axis) and the level of degradation ($y$-axis). Contour lines are drawn at levels where $10\%$ of the population has been lost. In panels (c)-(d), we plot the steady-state profile for $\delta = 6$ for different levels of degradation. In the left panels, we fix $d=1$; in the right panels we fix $d=10$, highlighting the influence of movement rates. First, we observe directly some expected monotonicity of the population levels (see Lemma \ref{monotonicallydecreasing-ss}). Panels (c)-(d) then demonstrate the existence versus non-existence of an extinction threshold. In panel (c), $\mu_{1,\infty} < 0$ and no extinction threshold is predicted; in pavel (d), $\mu_{1,\infty} > 0$ and there exists an extinction threshold $c_0 \approx 100$. It is interesting to observe the impact of movement rates of the population: when $d$ increases from panel (a) to panel (b), the contour lines are more concentrated in the centre. Therefore, the region for which the population can remain above $90\%$ (the left side) is larger for the faster population, and so the faster population is more resilient to introduction of moderately sized degradation regions. On the other hand, the regime of extinction is simultaneously increased (the right side), and so the faster population will experience extirpation sooner than the slower population should further habitat be removed.


\subsection{Preliminaries}\label{subsec-assumption}

We conclude the first section with the key assumptions used and some technical preparatory details. First we state the assumptions for $B$ and $f$.

\begin{assum}\label{assumptionA}
$B \Subset \O$ is an open subset with smooth boundary, comprised of finitely many disjoint components, each of which is simply connected.
\end{assum}

In practice, this corresponds to the ``cookie cutter" interpretation of habitat loss \cite{Pimm1995, Pimm2000}, suggesting that habitat loss is ``\textit{like a cookie cutter stamping out poorly mixed dough}". Geometrically, this ensures that the inner boundary $\p B$ does not touch the outer boundary $\p \O$, and prevents $B$ from breaking $\Omega$ into disjoint components in dimensions $N\geq2$.

\begin{assum}\label{assumptionf}
The function $f:(\overline{\Omega}\setminus B)\times[0,\infty)\to\mathbb{R}$ is assumed to satisfy the following conditions.
\begin{enumerate}[\rm(1)]
\item $f(\cdot,0) \equiv 0$, $f (\cdot, u)$ is H\oo lder continuous with exponent $\a \in (0,1)$ uniform with respect to $u$ in bounded sets;

\item $f(x,\cdot)\in C^{1}([0,\infty))$ for each $x \in \overline{\O} \setminus B$, $f_u(\cdot,0)$ is H\oo lder continuous with exponent $\a \in (0,1)$, and $f_u (x,0) > 0$ for some $x \in \O\setminus \overline{B}$;
\item For each $x\in \O \setminus \overline{B}$, $f(x,\cdot)$ is strictly concave down.
\end{enumerate}
\end{assum}

Items (1)-(2) ensure solutions are sufficiently smooth for our subsequent analysis. The positivity of $f_u (\cdot,0)$ somewhere in $\O \setminus\overline{B}$ is necessary to ensure that a nontrivial steady state may exist. Paired with the regularity of the domains $\O$ and $B$, we have regularity up to the boundary $\p \O$, and up to $\p B$ for problem \eqref{scalareqn-1}. Item (3) ensures uniqueness of the positive steady state (whenever it exists) and that the flow induced by the dynamical system is strongly monotone. This could be weakened to a subhomogeneity condition (see, e.g., \cite[Ch. 2.3]{Zhao2017}) when considering fixed values of $c$; we require concavity to obtain uniform convergence (in the variable $c$) from the time-dependent problem to the corresponding steady-state. A prototypical example satisfying Assumption \ref{assumptionf} is the heterogeneous logistic form $f(x,u) = u (m(x) - u)$ for some H\oo lder continuous function $m(x)$. For the remainder of the manuscript, Assumptions \ref{assumptionA} and \ref{assumptionf} are always assumed whenever $B$ and $f$ are involved.

We briefly introduce some important function spaces over the domain $\O \setminus \overline{B}$ and their relation to similar spaces over the entire domain $\O$. 
We denote by $C_B ^1 (\O)$ the collection of functions
$$
C_B^1 (\O) := \left\{ v \in C^1 (\overline{\O} \setminus B) : v \vert_{\p B} = 0 \right\},
$$
and consider $H_B^1 (\O)$ to be the closure of the space $C_B^1 (\O)$ with respect to the $H^1$-norm over $\O \setminus \overline{B}$. This way $H_B^1(\O)$ can be understood similar to the space $H_0 ^1(\O)$, with functions belonging to $H_B^1(\Omega)$ vanishing along $\p B$ in the trace sense. More precisely, it can be verified that
$$
H_{B}^{1}(\Omega)=\left\{u\in H^{1}(\Omega\setminus\overline{B}):\textbf{T}u=0\right\},
$$
where $\textbf{T}$ denotes the trace operator extending the notion of restricting a function on $\Omega\setminus B$ to $\partial B$.

For any $u\in C_{B}^{1}(\Omega)$ or $H_{B}^{1}(\Omega)$, we identify it with its zero extension into $B$. If $u\in H_{B}^{1}(\Omega)$, then $u \in H^{1}(\Omega)$. Conversely, if $u\in H^{1}(\Omega)$ with $u=0$ a.e. in $B$, then $u|_{\Omega}\in H_{B}^{1}(\Omega)$. Therefore, we identify $H_{B}^{1}(\Omega)$ with $\left\{u\in H^{1}(\Omega):u=0\,\,\text{a.e. in}\,\, B\right\}$, and simply write
\begin{equation*}
    \begin{split}
        H_{B}^{1}(\Omega)=\left\{u\in H^{1}(\Omega):u=0\,\,\text{a.e. in}\,\, B\right\}.
    \end{split}
\end{equation*}
We wish to emphasize that this identification rely crucially on Assumption \ref{assumptionA}; for less regular $B$, these two formulations of $H_B ^1(\Omega)$ need not agree \cite{Bonder2006,Bonder2007}.

We then have similar spaces with the temporal domain included. For $T>0$, we first set $Q_T: = \O \times (0,T)$ and $Q_{B,T}: = (\O \setminus \overline{B}) \times (0,T)$. We then define $H_B ^1 (Q_T)$ to be the closure of the set
$$
\left\{v\in C^{1}((\Omega\setminus B)\times[0,T]):v(\partial B\times[0,T])=0\right\}
$$ 
with respect to the $H^{1}$-norm over $(\Omega\setminus\overline{B})\times(0,T)$. By Assumption \ref{assumptionA} there holds
$$
H_{B}^{1}(Q_{T})=\left\{u\in H^{1}(Q_{B,T}):\textbf{T}u=0\right\}=\left\{u\in H^{1}(Q_{T}):u=0\,\,\text{a.e. in}\,\, B\times(0,T)\right\}.
$$

We organize the remainder of the paper as follows. In Section \ref{mainresults}, we study the connection between eigenvalue problems and prove Theorem \ref{convergence-thm-2}. Section \ref{apps} is devoted to investigating the connection between Cauchy problems \eqref{scalareqn-2} and \eqref{scalareqn-1}, and to proving Theorems \ref{convlem4} and \ref{finalscalarconvergence}. We discuss the implications of our results in Section \ref{sec-discussion}. Some proofs and well-known results concerning eigenvalue problems to be used throughout the paper are collected in the Supplementary Materials \ref{SM:proof_of_thm}-\ref{appendix-uniform-bounds}.


\section{Connection between eigenvalue problems}\label{mainresults}

In this section we study the connection between problems \eqref{MainEig2-2} and \eqref{MainEig1-2}, and prove Theorem \ref{convergence-thm-2}. We then study the related sign-indefinite weight problems, providing a robust and complete picture of the connections between these problems. We refer the reader to the Supplementary Materials for more general consideration of eigenvalue problems with sign-indefinite weight and their connections to typical eigenvalue problems like \eqref{MainEig2-2} and \eqref{MainEig1-2}. We begin with a proof of Theorem \ref{convergence-thm-2}.

\begin{proof}[Proof of Theorem \ref{convergence-thm-2}]
Recall that $\mu_{1,c}$ has the variational characterization (see Proposition \ref{deglineig-app}):
\begin{equation}\label{variational-characterization-2022-04-02}
\mu_{1,c} = \inf_{\phi \in H ^1 (\O)} \left\{ \int_\O \left( d \magg{\grad \phi} - m_{c} \phi^2 \right)  \ : \ \int_\O \phi^2  = 1  \right\}.
\end{equation}
By Proposition \ref{deglineig-app} (ii),  we see that $\mu_{1,c}$ is strictly increasing in $c$. Since $\phi_{1,\i} \in H_B ^1 (\O)$, $\phi_{1,\infty}\in H^1 (\O)$ by zero extension in $B$. It follows from \eqref{variational-characterization-2022-04-02} and the normalization $\int_{\O} \phi_{1,\infty} ^2 = 1$ that
\eq{
\mu_{1,c} \leq \int_{\O} \left( d \magg{\grad \phi_{1,\i}} - m_{c} \phi_{1,\i}^2 \right) =\int_{\O\setminus\overline{B}} \left( d \magg{\grad \phi_{1,\i}} - f_u (\cdot,0) \phi_{1,\i}^2 \right) = \mu_{1,\infty},
}
where the second equality is a result of the eigen-equation satisfied by $\mu_{1,\infty}$ and $\phi_{1,\infty}$. Thus, $\mu_{1,c}$ is strictly increasing and uniformly bounded by $\mu_{1,\infty}$. Hence, $\mu_\i:=\lim_{c\to\infty}\mu_{1,c}$ exists and is finite. Obviously, $\mu_{\infty}\leq\mu_{1,\infty}$.

From the eigen-equation satisfied by $\mu_{1,c}$ and $\phi_{1,c}$ (or \eqref{variational-characterization-2022-04-02} with the understanding that the infimum is attained at $\phi_{1,c}$), there holds
\eq{
d \int_{\O} \magg{\grad \phi_{1,c}} &= \mu_{1,c} + \int_\O m_c \phi_{1,c}^2 \leq \mu_\infty + \norm{f_u (\cdot,0)}_{L^\i (\O \setminus \overline{B})},
}
where we have thrown away the negative term and used the normalization $\int_{\O} \phi_{1,c} ^2 = 1$. Hence, $\{ \phi_{1,c}\}_{c>0}$ is bounded in $H^1 (\O)$. Consequently, there exists a subsequence (still denoted by $\phi_{1,c}$) and some $\phi_{\infty}\in H^{1}(\Omega)$ such that 
\begin{equation}\label{convergence-2022-04-01}
    \lim_{c\to\infty}\phi_{1,c} = \phi_\i\quad\text{weakly in}\,\,H^1 (\O)\quad\text{and}\quad\text{strongly in}\,\, L^2 (\O).
\end{equation}

Note that
\eq{
c \int_{B} \phi_{1,c}^2 = \mu_{1,c} + \int_{\O \setminus\overline{B}} f_u (\cdot,0) \phi_{1,c}^2 - d \int_\O \magg{\grad{\phi_{1,c}}} \leq \mu_{\infty} + \norm{f_u (\cdot,0)}_{L^\i (\O\setminus\overline{B})},
}
leading to $\int_{B} \phi_{1,c}^2\leq \frac{1}{c}\left(\mu_\infty + \norm{f_u (\cdot,0)}_{L^\i (\O\setminus\overline{B})}\right)\to0$ as $c\to\infty$. This together with the strong convergence in \eqref{convergence-2022-04-01} implies $\int_{B} \phi_{1,\infty}^2=0$. Hence, $\phi_\i = 0$ a.e. in $B$, and so, $\phi_\i \in H_B ^1 (\O)$. Furthermore, since $\int_\O \phi_{1,c}^2 = 1$, the strong convergence in \eqref{convergence-2022-04-01} implies that $\int_\O \phi_\i^2 = 1$. Hence, $\phi_\i$ is nonzero and is a valid test function in the variational characterization of $\mu_{1,\infty}$. 

We now show that $\mu_{1,\infty} \leq \mu_\i$. Note that $\mu_{1,\infty}$ has the variational characterization (see Proposition \ref{deglineig-1-app}):
$$
\mu_{1, \infty} = \inf_{\phi \in H_{B} ^1 (\O)} \left\{ \int_{\O\setminus\overline{B}} \left( d \magg{\grad \phi} - f_u (\cdot,0) \phi^2 \right)  \ : \ \int_{\O\setminus\overline{B}} \phi^2  = 1  \right\}.
$$
This together with the weak lower semicontinuity of the norm $\|\cdot\|_{L^{2}(\Omega)}$ and \eqref{convergence-2022-04-01} leads to
\eq{
\mu_{1,\infty} &\leq \int_{\O\setminus \overline{B}} d\magg{\grad \phi_\i} - \int_{\O\setminus \overline{B}} f_u (\cdot,0) \phi_\i ^2 \\
&= \int_{\O} d\magg{\grad \phi_\i} - \int_{\O} m_{c} \phi_\i ^2 \\
&\leq \liminf_{c \to \i} \int_{\O} d\magg{\grad \phi_{1,c}} - \lim_{c \to \i} \int_{\O} m_{c} \phi_{1,c}^2= \liminf_{c \to \i} \mu_{1,c} = \mu_\i . 
}
Hence, 
\begin{equation}\label{equality-2022-04-01}
    \mu_\i = \lim_{c \to \i} \mu_{1,c} = \mu_{1,\i}. 
\end{equation}
In particular, this implies that $\phi_\i$ solves the same eigenvalue problem as $\phi_{1,\i}$, and hence, $\phi_\i=\phi_{1,\i}$ by the uniqueness of the eigenfunction and the chosen normalization. 

It remains to show $\lim_{c\to\infty}\grad \phi_{1,c}=\grad \phi_\i$ in $L^{2}(\Omega)$ so that $\lim_{c\to\infty}\phi_{1,c}=\phi_{\infty}$ in $H^{1}(\Omega)$. Note that
\eq{
d  \int_\O \left( \magg{\grad \phi_{1,c}} - \magg{\grad \phi_\i} \right) &=  \mu_{1,c} - \mu_{1,\infty}  + \int_{\O \setminus \overline{B}} f_u (\cdot,0) ( \phi_{1,c}^2 - \phi_\i ^2 ) - c \int_B \phi_{1,c}^2 \\
&\leq  \mu_{1,c} - \mu_{1,\infty}  + \int_{\O \setminus \overline{B}} f_u (\cdot,0) ( \phi_{1,c}^2 - \phi_\i ^2 ).
}
Letting $c \to \i$ in the above inequality, we see from \eqref{equality-2022-04-01} and the strong convergence in \eqref{convergence-2022-04-01} that $\limsup_{c \to \i} \int_\O{\magg{\grad \phi_{1,c}}} \leq \int_\O \magg{\grad \phi_\i}$. As $\liminf_{c \to \i} \int_\O{\magg{\grad \phi_{1,c}}}\geq \int_\O \magg{\grad \phi_\i}$ due to the weak lower semicontinuity of the norm $\|\cdot\|_{L^{2}(\Omega)}$ and the weak convergence in \eqref{convergence-2022-04-01}, we find $\lim_{c \to \i} \int_\O{\magg{\grad \phi_{1,c}}}=\int_\O \magg{\grad \phi_\i}$, which together with the weak convergence in \eqref{convergence-2022-04-01} yields $\lim_{c\to\infty}\grad \phi_{1,c}=\grad \phi_{1,\i}$ in $L^{2}(\Omega)$.
\end{proof}

In the rest of this section, we explore further the eigenvalue problems with sign-indefinite weight associated with \eqref{MainEig2-2} and \eqref{MainEig1-2}, that is, 
\begin{equation}\label{MainEig2}
\begin{cases}
\D \psi + \l m_{c} \psi = 0,& \text{in}\quad \O ,  \\
\frac{\p \psi}{\p \nu } = 0, & \text{on} \quad\p \O,
\end{cases}
\end{equation}
and
\begin{equation}\label{MainEig1-1}
\begin{cases}
\D \psi +  \l f_u (\cdot,0)  \psi = 0 ,& \text{in} \quad\O \setminus \overline{B},  \\
\frac{\p \psi}{\p \nu } = 0 ,& \text{on}\quad\p \O , \\
\psi = 0 , &\text{on} \quad\p B.
\end{cases}
\end{equation}
While not directly related to the results obtained for the Cauchy problems in subsequent sections, it is necessary to also establish a connection between the principal eigenvalues to problems \eqref{MainEig2} and \eqref{MainEig1-1} in order to completely describe the relationship to problems \eqref{MainEig2-2} and \eqref{MainEig1-2}, particularly in the limiting case. We make this more precise following the statement of Theorem \ref{convergence-thm}.

It is easy to see that Assumption \ref{assumptionf} ensures that $m_{c}\in L^{\infty}(\Omega)$ is sign-changing and $f_u (\cdot,0) \in L^\i (\O \setminus \overline{B})$ is positive on a set of positive Lebesgue measure. Thus, Propositions \ref{MainEig2-thm-app} and \ref{existthm-app} apply to \eqref{MainEig2} and \eqref{MainEig1-1}, respectively. Set $c_*:= \frac{1}{\as{B}}\int_{\O \setminus B} f_u (\cdot,0)$. It is elementary to see that $\int_\O m_c < 0$ for all $c > c_{*}$. For each $c> c_{*}$, we denote by $\lambda_{1,c}$ the unique nonzero principal eigenvalue of \eqref{MainEig2}, and by $\psi_{1,c}$ the associated positive eigenfunction. Denote by $\lambda_{1,\infty}$ the unique positive principal eigenvalue of \eqref{MainEig1-1}, and by $\psi_{1,\infty}$ the associated positive eigenfunction.

We also have the following, connecting the principal eigenpairs of \eqref{MainEig2} and \eqref{MainEig1-1} as $c\to\infty$.

\begin{thm}\label{convergence-thm}
The following hold.
\begin{enumerate}[\rm(i)]
\item The function $c\mapsto \lambda_{1,c} $ is strictly increasing on $(c_{*},\infty)$, and $\lim_{c \to \i}\lambda_{1,c} =\lambda_{1,\infty}$.

\item $\lim_{c \to \i} \psi_{1,c} = \psi_{1,\infty}$ in $H ^1 (\O)$ under the normalization\\ $\int_\O m_c \psi_{1,c}^2 = \int_{\O \setminus B } f_u (\cdot,0) \psi_{1,\i} ^2 = 1$.
\end{enumerate}
\end{thm}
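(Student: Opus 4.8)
The plan is to follow the strategy used for Theorem~\ref{convergence-thm-2}, the essential new difficulty being that the eigenvalue now multiplies the weight, so the normalization $\int_{\O}m_{c}\psi_{1,c}^{2}=1$ does not by itself control $\|\psi_{1,c}\|_{L^{2}(\O)}$. For part~(i), the strict monotonicity of $c\mapsto\l_{1,c}$ on $(c_{*},\i)$ follows from the monotone dependence of the principal eigenvalue of \eqref{MainEig2} on its weight (Proposition~\ref{MainEig2-thm-app}), since $m_{c}$ is pointwise non-increasing in $c$ and strictly decreasing on $B$. For the upper bound, I extend $\psi_{1,\i}$ by zero into $B$; the extension lies in $H^{1}(\O)$ and satisfies $\int_{\O}m_{c}\psi_{1,\i}^{2}=\int_{\O\setminus B}f_{u}(\cdot,0)\psi_{1,\i}^{2}=1$, so it is admissible in the variational characterization of $\l_{1,c}$, giving $\l_{1,c}\le\int_{\O}\magg{\grad\psi_{1,\i}}=\int_{\O\setminus\overline{B}}\magg{\grad\psi_{1,\i}}=\l_{1,\i}$. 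Hence $\l_{\i}:=\lim_{c\to\i}\l_{1,c}$ exists and is finite, with $0<\l_{1,c}\le\l_{\i}\le\l_{1,\i}$ for every $c>c_{*}$ (positivity of $\l_{1,c}$ holding because $\int_{\O}m_{c}<0$); in particular $\l_{\i}>0$.

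Next I would establish compactness of $\{\psi_{1,c}\}_{c>c_{*}}$. Testing the eigen-equation with $\psi_{1,c}$ and using the normalization gives $\int_{\O}\magg{\grad\psi_{1,c}}=\l_{1,c}\le\l_{1,\i}$, so the gradients are bounded in $L^{2}(\O)$. The normalization reads $\int_{\O\setminus B}f_{u}(\cdot,0)\psi_{1,c}^{2}-c\int_{B}\psi_{1,c}^{2}=1$, hence $c\int_{B}\psi_{1,c}^{2}=\int_{\O\setminus B}f_{u}(\cdot,0)\psi_{1,c}^{2}-1\ge0$; this yields the lower bound $\|\psi_{1,c}\|_{L^{2}(\O\setminus B)}^{2}\ge\|f_{u}(\cdot,0)\|_{L^{\i}}^{-1}$, and it also forces $\|\psi_{1,c}\|_{L^{2}(\O)}$ to remain bounded, since otherwise the rescalings $\psi_{1,c}/\|\psi_{1,c}\|_{L^{2}(\O)}$ would (along a subsequence) converge in $H^{1}(\O)$ to a nonzero constant, contradicting $\int_{B}(\psi_{1,c}/\|\psi_{1,c}\|_{L^{2}(\O)})^{2}\le\|f_{u}(\cdot,0)\|_{L^{\i}}/c\to0$. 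Thus $\{\psi_{1,c}\}$ is bounded in $H^{1}(\O)$ and bounded away from $0$ in $L^{2}(\O)$, so along a subsequence $\psi_{1,c}\rightharpoonup\psi_{\i}$ weakly in $H^{1}(\O)$ and strongly in $L^{2}(\O)$, with $\psi_{\i}\gneq0$; and since $\int_{B}\psi_{1,c}^{2}\le\|f_{u}(\cdot,0)\|_{L^{\i}}/c\to0$ we get $\psi_{\i}=0$ a.e.\ in $B$, i.e.\ $\psi_{\i}\in H_{B}^{1}(\O)$.

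Passing to the limit in the weak form of \eqref{MainEig2} tested against $\varphi\in H_{B}^{1}(\O)$ (so the $B$-term drops) and using $\l_{1,c}\to\l_{\i}$, the weak $H^{1}$ convergence on the left, and the strong $L^{2}$ convergence on the right, I conclude that $\psi_{\i}$ is a nonnegative---hence, by elliptic regularity and the strong maximum principle, positive in $\O\setminus\overline{B}$---eigenfunction of \eqref{MainEig1-1} with eigenvalue $\l_{\i}$; by uniqueness of the positive principal eigenvalue of \eqref{MainEig1-1} (Proposition~\ref{existthm-app}) this gives $\l_{\i}=\l_{1,\i}$, completing~(i). Writing $\psi_{\i}=s\,\psi_{1,\i}$ with $s>0$, testing the limit equation with $\psi_{\i}$ gives $\int_{\O\setminus\overline{B}}\magg{\grad\psi_{\i}}=\l_{1,\i}\int_{\O\setminus B}f_{u}(\cdot,0)\psi_{\i}^{2}$, while weak lower semicontinuity of the Dirichlet energy gives $\int_{\O\setminus\overline{B}}\magg{\grad\psi_{\i}}\le\liminf_{c\to\i}\int_{\O}\magg{\grad\psi_{1,c}}=\l_{1,\i}$, so $\int_{\O\setminus B}f_{u}(\cdot,0)\psi_{\i}^{2}\le1$; on the other hand $\int_{\O\setminus B}f_{u}(\cdot,0)\psi_{1,c}^{2}=1+c\int_{B}\psi_{1,c}^{2}\to\int_{\O\setminus B}f_{u}(\cdot,0)\psi_{\i}^{2}$ with $c\int_{B}\psi_{1,c}^{2}\ge0$ gives $\int_{\O\setminus B}f_{u}(\cdot,0)\psi_{\i}^{2}\ge1$. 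Hence $\int_{\O\setminus B}f_{u}(\cdot,0)\psi_{\i}^{2}=1$, whence $s=1$, $\psi_{\i}=\psi_{1,\i}$, and $c\int_{B}\psi_{1,c}^{2}\to0$. Since the normalized limit is independent of the subsequence, the whole family converges; and $\int_{\O}\magg{\grad\psi_{1,c}}=\l_{1,c}\to\l_{1,\i}=\int_{\O}\magg{\grad\psi_{1,\i}}$ together with the weak $H^{1}$ convergence upgrades it to strong convergence in $H^{1}(\O)$, proving~(ii).

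I expect the main obstacle to be exactly this loss of a fixed $L^{2}$-normalization: one must show a priori that $\|\psi_{1,c}\|_{L^{2}(\O)}$ is bounded and bounded away from $0$, and then identify the scaling of the limiting eigenfunction. The device that makes this go through is the identity $c\int_{B}\psi_{1,c}^{2}=\int_{\O\setminus B}f_{u}(\cdot,0)\psi_{1,c}^{2}-1$: its nonnegativity supplies the ``$\ge1$'' half of the limiting normalization, weak lower semicontinuity of the Dirichlet energy supplies the ``$\le1$'' half, and reconciling the two simultaneously pins down $s=1$ and $c\int_{B}\psi_{1,c}^{2}\to0$.
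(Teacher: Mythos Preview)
Your proof is correct and follows the same overall strategy as the paper, which merely sketches that the argument of Theorem~\ref{convergence-thm-2} carries over. In fact your treatment goes beyond the paper's outline: you explicitly handle the $L^{2}(\Omega)$-boundedness of $\psi_{1,c}$ under the weighted normalization (via the contradiction argument with the rescaled sequence) and the identification of the scaling constant of the limit $\psi_{\infty}$ through the two-sided estimate on $\int_{\Omega\setminus B}f_{u}(\cdot,0)\psi_{\infty}^{2}$, both of which are genuine issues that the paper's sketch passes over in the phrase ``we find that $\psi_{1,c}$ is uniformly bounded in $H^{1}(\Omega)$''.
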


We can now make clear the important connection such a result endows. Proposition \ref{deglineig-app} (iii) provides a way of characterizing the sign of $\mu_{1,c}$ based on $\l_{1,c}$ and its relation to the size of $d$. Similarly, Proposition \ref{deglineig-1-app} (iv) gives an identical result for $\mu_{1,\i}$, whose sign is characterized by a relation between $\l_{1,\i}$ and $d$. Given that $\lim_{c\to\infty}\mu_{1,c}=\mu_{1,\i}$, it is expected that the quantity used to determine the sign of these eigenvalues should remain consistent. Theorem \ref{convergence-thm} confirms this is indeed the case, completing the analytical connection between these four problems. 

The biological insights gained by establishing this connection are also useful. Based on Propositions \ref{deglineig-app} and \ref{deglineig-1-app}, as is well known, a smaller rate of movement is favorable for a population's persistence in an environment with temporally static resources. How low movement must be to ensure persistence (or extinction, in the case of, e.g., undesirable pest populations) depends precisely on the size of the principal eigenvalues. With Theorem \ref{convergence-thm}, under increasing habitat degradation, the limiting case acts as a ``worst case scenario", providing a necessary and sufficient condition for the survival of species for any $c$. Alternatively, it provides insights into whether extinction is a possibility: if $d < \l_{1,\i}^{-1}$, extinction in the degradation case is impossible; otherwise, there exists $c$ large enough to guarantee extirpation.

\begin{proof}[Proof of Theorem \ref{convergence-thm}]
Since the proof is almost identical to the proof of Theorem \ref{convergence-thm-2}, we outline only the key steps. First, it is easy to deduce that $\lambda_{1,c}$ is strictly increasing and bounded above by $\lambda_{1,\infty}$. As a result, its limit exists and is given by the supremum, denoted by $\l_\i$. Then, we find that $\psi_{1,c}$ is uniformly bounded in $H^1 (\O)$ and thus has a convergent subsequence, weakly in $H^1 (\O)$ and strongly in $L^2(\O)$. Denote this by $\psi_\i$. Furthermore, $\psi_{1,c} \to 0$ a.e. in $B$, and so the candidate function $\psi_\i \in H_B ^1 (\O)$ as argued previously. One can show show that $\lambda_{1,\infty} \leq \l_\i$ by the weak lower semicontinuity of the norm. This implies that $\grad \psi_{1,c} \to \grad \psi_\i$ in norm, and hence the convergence is in fact strong. Uniqueness of the eigenfunction allows one to conclude that $\psi_\i = \psi_{1,\i}$.
\end{proof}


\section{Connection between Cauchy problems}\label{apps}

We now establish connection between the degradation problem \eqref{scalareqn-2} and the destruction problem \eqref{scalareqn-1} as given in Theorems \ref{convlem4} and \ref{finalscalarconvergence}. 


\subsection{Global dynamics of degradation and destruction problems}

We first present results about the global well-posedness of the problems \eqref{scalareqn-2} and \eqref{scalareqn-1}. For \eqref{scalareqn-1}, the existence and uniqueness of a global classical solution 
\begin{equation}\label{gwp-destruction}
\begin{split}
&u_{\infty} \in C^{2+\a, 1+\a/2} ((\Omega\setminus\overline{B})\times(0,\infty)) \cap C^{+}( (\overline{\Omega}\setminus B)\times[0,\infty))\\
&\text{with}\,\,u_{\infty}(0,\cdot)\in H_B ^1 (\O) \cap C^{+}( \overline{\O}\setminus{B})
\end{split}
\end{equation}
follows from the classical local well-posedness theory (see e.g. \cite[Chapter 2]{Pao1992}) and the comparison principle.

For \eqref{scalareqn-2}, the existence and uniqueness of a global strong solution 
\begin{equation}\label{gwp-degradation}
    \begin{split}
        &u_c \in C^{\a} ( [0,\i); C^+ ( \overline{\O})) \cap C^1 ( (0,\i); C^+ (\overline{\O}))\\
        &\text{with}\,\,u_{c}(0,\cdot)\in C^{+}( \overline{\O})
    \end{split}
\end{equation}
follows from the proof of \cite[Theorem 2.1]{Salmaniw2022} (with proper modifications to approximate $\Ind_{\O \setminus B}f(x,u)$) and the comparison principle. While the solution $u_c$ is not twice continuously differentiable in space due to the discontinuity in the right hand side of \eqref{scalareqn-2}, $u_c \in C^{1 + \a, (1+\a)/2} (Q_T)$ for any $\a \in (0,1)$ and any $T>0$ by the Sobolev embedding. It is important to point out that these regularity results hold for each $c>0$ fixed, but \textit{do not a priori hold independent of $c$}. 

In what follows, $u_{c}$ and $u_{\infty}$ are respectively understood in the sense of \eqref{gwp-degradation} and \eqref{gwp-destruction} unless otherwise specified. The global dynamics of the degradation problem \eqref{scalareqn-2} is given in the following.

\begin{thm}\label{globaldegradation}
The following hold for each $c\in(0,\infty)$.
\begin{enumerate}[\rm(i)]
\item If $\mu_{1,c} < 0$, then \eqref{scalareqn-2} admits a unique positive steady state $u_c^*\in W^{1,p}(\Omega)$ for any $p\geq1$, and $u_c(\cdot,t) \to u_c^*$ in $C( \overline{\O})$ as $t \to \i$ whenever $u_{c}(0,\cdot)\not\equiv 0$;

\item If $\mu_{1,c} \geq 0$, then $0$ is the only steady-state solution to \eqref{scalareqn-2} and $u_c(\cdot,t) \to 0$ in $C(\overline{\O})$ as $t \to \i$.
\end{enumerate}
Moreover, 
\begin{itemize}
\item if $\mu_{1,\i} \leq0$, then $\rm(i)$ holds for any $c \in (0,\i)$;
\item if $\mu_{1,\i} > 0$, then there exists $c^*>0$ such that $\rm(i)$ holds for all $c\in(0,c^{*})$ and $\rm(ii)$ holds for all $c \in (c^*,\i)$.
\end{itemize}
\end{thm}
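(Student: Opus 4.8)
The plan is to treat this as a standard application of monotone dynamical systems theory (cf. \cite{Hess1991, Zhao2017, CantrellCosner2003}), with the extra structural input coming from the strict concavity in Assumption \ref{assumptionf}(3). First I would fix $c \in (0,\infty)$ and recall that \eqref{scalareqn-2} generates a strongly monotone semiflow on $C^+(\overline{\Omega})$: this follows from the comparison principle together with the strong maximum principle, using the smoothing built into \eqref{gwp-degradation}. The sign of $\mu_{1,c}$ governs the local stability of the trivial steady state, since the linearization of \eqref{scalareqn-2} about $0$ is precisely \eqref{MainEig2-2}. For part (ii), when $\mu_{1,c} \geq 0$, I would first rule out nontrivial steady states: if $\bar u \gneq 0$ were a steady state, then testing the steady-state equation against $\phi_{1,c}$ (or against $\bar u$ itself) and invoking the strict concavity $f(x,u) < f_u(x,0)u$ for $u>0$ forces $\mu_{1,c} < 0$, a contradiction. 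Global attraction of $0$ then follows by a standard supersolution argument: for any initial datum there is $K>0$ with $u_c(\cdot,0) \leq K\phi_{1,c}$ (extending $\phi_{1,c}$ positively on $\overline{B}$ if needed, or more simply comparing with the solution of the ODE obtained from an $L^\infty$ bound on the reaction), and $K\phi_{1,c}e^{-\mu_{1,c}t}$ is a supersolution that either decays or, when $\mu_{1,c}=0$, can be handled by a perturbation/strict-concavity argument to still drive the solution to $0$.

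For part (i), when $\mu_{1,c} < 0$, existence of a positive steady state $u_c^*$ comes from the method of sub- and supersolutions: $\varepsilon\phi_{1,c}$ is a subsolution for small $\varepsilon>0$ (here the sign of $\mu_{1,c}$ and the concavity are used so that the higher-order terms are dominated), and any large constant $M$ exceeding the zeros of $f(x,\cdot)$ is a supersolution, so there is a steady state between them; $W^{1,p}$ regularity for all $p$ follows from elliptic $L^p$ theory applied to the equation with bounded, merely-$L^\infty$ reaction term. Uniqueness of the positive steady state is a consequence of strict concavity via the classical argument: if $u_1, u_2$ are two positive steady states, the quotient $u_1/u_2$ attains an interior extremum that, combined with $\lambda f(x,u) < f(x,\lambda u)$ for $\lambda \in (0,1)$, yields a contradiction unless $u_1 \equiv u_2$. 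Global convergence $u_c(\cdot,t)\to u_c^*$ for every $u_c(0,\cdot)\gneq 0$ then follows from the general theory of strongly monotone subhomogeneous (here, concave) semiflows with a unique positive fixed point: trap the orbit between $\varepsilon\phi_{1,c}$ (a subsolution, below the orbit after a short time by the strong maximum principle) and a large constant supersolution, and use monotone convergence of the corresponding solutions to the unique steady state, squeezing $u_c(\cdot,t)$.

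For the final ``Moreover'' dichotomy, the key is Theorem \ref{convergence-thm-2}(1): $c\mapsto\mu_{1,c}$ is continuous (indeed it inherits continuity from the variational characterization \eqref{variational-characterization-2022-04-02}, being a decreasing-in-$c$-penalty infimum — one checks Lipschitz-type continuity directly since $|\mu_{1,c} - \mu_{1,c'}| \leq |c-c'|\,\|\phi\|^2_{L^2(B)}$ for the relevant minimizers), strictly increasing, and $\mu_{1,c}\uparrow\mu_{1,\infty}$. If $\mu_{1,\infty}\leq 0$ then $\mu_{1,c} < \mu_{1,\infty}\leq 0$ for all $c$, so (i) holds throughout. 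If $\mu_{1,\infty}>0$, then by the intermediate value theorem there is a unique $c^*>0$ with $\mu_{1,c^*}=0$; strict monotonicity gives $\mu_{1,c}<0$ on $(0,c^*)$ and $\mu_{1,c}>0$ on $(c^*,\infty)$, so (i) holds on $(0,c^*)$ and (ii) on $(c^*,\infty)$. (One should note $c^*>c_*$, matching Corollary \ref{cor:exthresh}(1), which follows by evaluating the Rayleigh quotient at the constant function.)

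The main obstacle I anticipate is the boundary case $\mu_{1,c}=0$ inside part (ii): the naive supersolution $K\phi_{1,c}$ is merely stationary rather than decaying, so extinction must be extracted from the strict concavity — the standard fix is to observe that along the orbit the solution is strictly below $K\phi_{1,c}$ for $t>0$ by strong monotonicity, hence $u_c(\cdot,t_0)\leq (1-\delta)K\phi_{1,c}$ for some $\delta>0$ and some $t_0$, and then strict concavity ($f(x,(1-\delta)s) > (1-\delta)f(x,s)$ rearranged) makes $(1-\delta)K\phi_{1,c}$ a strict supersolution whose associated solution decays to $0$, squeezing the orbit. The other mildly delicate point is the reduced regularity of $u_c$ (no $C^2$ in space), but since all comparison and monotonicity arguments only need the strong solution framework of \eqref{gwp-degradation} and parabolic comparison for $L^\infty$ coefficients, this is routine.
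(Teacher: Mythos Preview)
Your proposal is correct and follows essentially the same route as the paper: both invoke the strong monotonicity of the semiflow, use the concavity of $f(x,\cdot)$ for uniqueness of the positive steady state, appeal to monotone dynamical systems theory for global convergence, and deduce the ``Moreover'' dichotomy from Theorem \ref{convergence-thm-2}(1). Your write-up is in fact more explicit than the paper's (which largely cites \cite{Ni2001} and \cite{Salmaniw2022}), particularly in handling the borderline case $\mu_{1,c}=0$ and in noting the continuity of $c\mapsto\mu_{1,c}$ needed for the intermediate value argument.
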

\begin{proof}
This result follows from essentially from classical results, see e.g. \cite[Ch. 4, Theorem 4.1]{Ni2001}, alongside some of the arguments made in \cite{Salmaniw2022} since the function $\Ind_{\O \setminus B}f(x,u) - c \Ind_{B} u$ may be discontinuous along $\p B$. More precisely, a strong monotonicity result holds for problem \eqref{scalareqn-2}, and the uniqueness of the positive steady state follows from the concavity of the function $f(x,u) - c \Ind_{B} u$ (see Proposition 2.2 and Theorem 2.1 in \cite{Salmaniw2022}). The result then follows from the theory of monotone flows. The ``Moreover" part follows directly from Theorem \ref{convergence-thm-2}(1).
\end{proof}

\begin{remark}
    Whenever it exists, the unique quantity $c^*$ is exactly the extinction threshold $c_0$ given in Corollary \ref{cor:exthresh}. This is the precise value at which any further habitat degradation (for a fixed configuration $B$) results in deterministic extirpation. Panels (c)-(d) of Figure \ref{fig:test} demonstrate this point: in panel (d) there exists an extinction threshold, while in panel (c) there does not.
\end{remark}

The global dynamics of the destruction problem \eqref{scalareqn-1} are summarized in the next result.

\begin{thm}\label{globaldestruction}
Assume $0 \lneq u_\infty(0,\cdot) \in C_B ^1 (\overline{\O})$. The following hold.
\begin{enumerate}[\rm(i)]
\item If $\mu_{1,\infty} < 0$, then \eqref{scalareqn-1} admits a unique positive steady state $u^*_{\infty}\in C^{2+\a} (\O \setminus \overline{B}) \cap C( \overline{\O} \setminus B)$, and $u_{\infty} \to u^*_{\infty}$ in $C( \overline{\O} \setminus B)$ as $t \to \i$; 

\item If $\mu_{1,\i} \geq 0$, then $0$ is the only steady state to \eqref{scalareqn-1}, and $u_{\infty} \to 0$ in $C(\overline{\O} \setminus B)$ as $t \to \i$.
\end{enumerate}
\end{thm}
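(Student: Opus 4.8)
The plan is to realise \eqref{scalareqn-1} as a strongly monotone, strictly sublinear semiflow on the positive cone of $C_B^1(\overline{\O})$ and to invoke the standard dichotomy for such systems, exactly as was done for the degradation problem in Theorem \ref{globaldegradation}; the only genuinely new features are the mixed Neumann--Dirichlet boundary condition (Neumann on $\p\O$, Dirichlet on $\p B$) and the associated function space. First I would collect the structural ingredients. By the comparison principle the solution map $u_\infty(0,\cdot)\mapsto u_\infty(\cdot,t)$, which I write $\Phi_t$, is order preserving; by the strong maximum principle in $\O\setminus\overline{B}$ together with the Hopf boundary lemma along $\p B$ (valid since $\p B$ is smooth by Assumption \ref{assumptionA}), it is strongly monotone on the positive cone after any positive time; by interior and boundary parabolic Schauder estimates together with an a priori $L^{\i}$ bound, every orbit is relatively compact in $C(\overline{\O}\setminus B)$; and, since $f(x,0)=0$ with $f(x,\cdot)$ strictly concave (Assumption \ref{assumptionf}), the map $u\mapsto f(x,u)/u$ is strictly decreasing on $(0,\i)$, so the semiflow is strictly sublinear. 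Moreover \eqref{scalareqn-1} is a gradient system for the energy $E[u]=\int_{\O\setminus\overline{B}}\big(\tfrac{d}{2}\magg{\grad u}-F(x,u)\big)$ with $F(x,u)=\int_{0}^{u}f(x,s)\,{\rm d}s$ (the boundary contributions in the time derivative of $E$ vanish because $u_t=0$ on $\p B$ and $\p_\nu u=0$ on $\p\O$), so every $\omega$-limit set consists of steady states.

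Next I would analyse the elliptic problem. Linearising about $0$ shows that the trivial state is stable when $\mu_{1,\i}>0$ and unstable when $\mu_{1,\i}<0$, with $(\mu_{1,\i},\phi_{1,\i})$ the principal eigenpair of \eqref{MainEig1-2}. If $\mu_{1,\i}\geq 0$, there is no positive steady state: multiplying the steady-state equation for a hypothetical positive $w$ by $\phi_{1,\i}$, the eigen-equation for $(\mu_{1,\i},\phi_{1,\i})$ by $w$, and subtracting gives $\int_{\O\setminus\overline{B}}\big(f_u(\cdot,0)w-f(\cdot,w)\big)\phi_{1,\i}=\mu_{1,\i}\int_{\O\setminus\overline{B}}w\,\phi_{1,\i}\geq 0$, which is impossible because strict concavity forces $f(x,w)<f_u(x,0)w$ wherever $w>0$. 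If $\mu_{1,\i}<0$, then $\underline{u}:=\e\phi_{1,\i}$ is a strict subsolution of the steady-state problem for all small $\e>0$, since $d\D\underline{u}+f(\cdot,\underline{u})=-\mu_{1,\i}\e\phi_{1,\i}+\big(f(\cdot,\e\phi_{1,\i})-f_u(\cdot,0)\e\phi_{1,\i}\big)=-\mu_{1,\i}\e\phi_{1,\i}+o(\e)>0$, while a large supersolution $\overline{u}$ is supplied by the a priori $L^{\i}$ bound; the monotone iteration between $\underline{u}$ and $\overline{u}$ then produces a positive steady state, strict sublinearity makes it unique (call it $u^*_\i$), and elliptic Schauder estimates give $u^*_\i\in C^{2+\a}(\O\setminus\overline{B})\cap C(\overline{\O}\setminus B)$.

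Finally I would assemble the long-time behaviour. If $\mu_{1,\i}\geq 0$ (part $\rm(ii)$), relative compactness of the orbit together with the gradient structure forces $\omega(u_\infty(0,\cdot))$ to be a nonempty set of steady states, hence $\{0\}$, so $u_\infty(\cdot,t)\to 0$ in $C(\overline{\O}\setminus B)$; when $\mu_{1,\i}>0$ one may alternatively dominate $u_\infty$ by the solution of $v_t=d\D v+f_u(\cdot,0)v$ (using $f(x,u)\leq f_u(x,0)u$), which decays at rate $e^{-\mu_{1,\i}t}$. If $\mu_{1,\i}<0$ (part $\rm(i)$), the trajectory from $\underline{u}$ is nondecreasing in $t$ and that from $\overline{u}$ nonincreasing, both converging in $C(\overline{\O}\setminus B)$ to the unique positive steady state $u^*_\i$; given $0\lneq u_\infty(0,\cdot)\in C_B^1(\overline{\O})$, strong monotonicity at some time $\d>0$ gives $\underline{u}\leq u_\infty(\cdot,\d)\leq\overline{u}$ for $\e$ small and $\overline{u}$ large (here the Hopf lemma matches the vanishing rates of $u_\infty(\cdot,\d)$ and $\phi_{1,\i}$ at $\p B$), so comparison yields $\Phi_t(\underline{u})\leq u_\infty(\cdot,\d+t)\leq\Phi_t(\overline{u})$ and letting $t\to\i$ squeezes $u_\infty(\cdot,t)\to u^*_\i$.

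The main obstacle I anticipate is the careful treatment of the inner boundary $\p B$: verifying that the strong maximum principle and the Hopf lemma apply there so that the semiflow is strongly monotone on the cone of $C_B^1(\overline{\O})$, and that an arbitrary positive solution is, after an arbitrarily short time, bounded below by $\e\phi_{1,\i}$ even though both vanish on $\p B$ — this is precisely where the interior-sphere condition and the smoothness of $\p B$ from Assumption \ref{assumptionA} are used. The remaining steps are a routine transcription of the classical scalar logistic theory to a domain with mixed boundary conditions, in the spirit of the references cited for Theorem \ref{globaldegradation}.
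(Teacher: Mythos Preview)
Your approach is essentially the same as the paper's: both work in $X=C_B^1(\overline{\O})$, use the strong maximum principle and Hopf lemma along $\p B$ to obtain a strongly monotone semiflow, build the positive steady state when $\mu_{1,\infty}<0$ from the sub/super pair $(\e\phi_{1,\infty},\,\text{large constant})$, and use concavity for uniqueness and global attractivity. Your addition of the gradient structure is correct but unnecessary here---the paper simply invokes the monotone-flow theory directly.

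One slip: in your nonexistence argument for $\mu_{1,\infty}\geq 0$, the identity you derive has the wrong sign. Multiplying the steady-state equation $d\D w+f(\cdot,w)=0$ by $\phi_{1,\infty}$, the eigen-equation by $w$, and subtracting (the boundary terms cancel since both functions vanish on $\p B$ and have zero normal derivative on $\p\O$) gives
\[
\int_{\O\setminus\overline{B}}\big(f(\cdot,w)-f_u(\cdot,0)w\big)\phi_{1,\infty}=\mu_{1,\infty}\int_{\O\setminus\overline{B}}w\,\phi_{1,\infty}.
\]
Strict concavity makes the left side strictly negative while the right side is nonnegative, which is the contradiction; as you wrote it, both sides are nonnegative and there is nothing impossible. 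The paper instead observes that $\e\phi_{1,\infty}$ is a strict supersolution for every $\e>0$ and deduces nonexistence from that. Either route works once the sign is corrected.
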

\begin{proof}
This result follows from similar arguments made in the proof of Theorem \ref{globaldegradation}, however there are some technical issues to address due to the Dirichlet boundary condition along the boundary $\p B$. To address this, we set $X := C_B ^1 (\overline{\O})$ and recall the strong partial order on $X$ generated by the cone $X^{+}=\left\{v\in X:v\geq0\right\}$ with interior
\eq{
X^{++} = \left\{ v \in X : v > 0 \text{ in } \overline{\O} \setminus \overline{B} \text{ and } \frac{\p v}{\p \nu } < 0 \text{ on } \p B \right\} .
}
The global existence, uniqueness and regularity of solutions to \eqref{scalareqn-1} with initial data in $X^{+}$, paired with the comparison principle, ensure that \eqref{scalareqn-1} generates a strongly monotone flow on $X^{+}$. 

When $\mu_{1,\i} < 0$, the existence of a positive steady state $u^* \in X^{++}$ follows from a sub/super solution argument, where any sufficiently large constant acts as a super solution due to Assumption \ref{assumptionf} (3), and $w = \e \tilde \psi_1$ acts as a sub solution since we may choose $\e$ sufficiently small but positive so that $f_u (\cdot,0) - \frac{f(\cdot,w)}{w} \leq - \mu_1(\i)$, again by Assumption \ref{assumptionf} (3). Furthermore, the concavity ensures that the steady state is unique (subhomogeneity is sufficient, see \cite[Ch. 2.3]{Zhao2017}). Since problem \eqref{scalareqn-1} generates a strongly monotone flow in $X^{+}$, we then conclude that $u^*$ is globally attractive for any initial data $u_0\in X^{+} \setminus \{ 0 \}$. 

When $\mu_{1,\infty}\leq0$, $0$ is the only steady state solving problem \eqref{scalareqn-1} since $w = \e \tilde\psi_1$ is a (strict) super solution for all $\e>0$ by Assumption \ref{assumptionf} (3), and $0$ is globally attractive.
\end{proof}


\subsection{From habitat degradation to habitat destruction: steady states}

This subsection is devoted to the convergence between the steady states $u_c^*$ and $u^*_{\infty}$ as $c \to \i$ as stated in Theorem \ref{convlem4}. We need the following lemma.

\begin{lem}\label{monotonicallydecreasing-ss}
Assume $\mu_{1,\infty}<0$. Then, for any $0 < \underline{c} < \overline{c}$, there holds $u_{\infty}^{*}\leq u_{\overline{c}}^* < u_{\underline{c}}^*$ in $\overline{\Omega}$.
\end{lem}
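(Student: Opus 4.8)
The plan is to establish the chain $u_{\infty}^{*}\leq u_{\overline{c}}^{*}<u_{\underline{c}}^{*}$ in two separate pieces, using sub/super solution comparisons together with the strong monotonicity and uniqueness results already available. For the middle inequality $u_{\overline{c}}^{*}<u_{\underline{c}}^{*}$, I would first observe that $u_{\overline{c}}^{*}$ satisfies
$$
d\D u_{\overline{c}}^{*}+\Ind_{\O\setminus B}f(x,u_{\overline{c}}^{*})-\underline{c}\,\Ind_{B}u_{\overline{c}}^{*}
=(\overline{c}-\underline{c})\Ind_{B}u_{\overline{c}}^{*}\geq0\quad\text{in }\O,
$$
since $\overline{c}>\underline{c}$ and $u_{\overline{c}}^{*}\geq0$; hence $u_{\overline{c}}^{*}$ is a (weak) subsolution of the steady-state problem for \eqref{scalareqn-2} with parameter $\underline{c}$. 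A large constant is a supersolution by Assumption \ref{assumptionf}(3), so the monotone iteration started from $u_{\overline{c}}^{*}$ produces a positive steady state sitting above $u_{\overline{c}}^{*}$; by the uniqueness of the positive steady state (Theorem \ref{globaldegradation}(i), valid here because $\mu_{1,\underline{c}}\leq\mu_{1,\overline{c}}<0$ by monotonicity of $c\mapsto\mu_{1,c}$ and because $\mu_{1,\infty}<0$ forces $\mu_{1,\overline{c}}<0$ via Theorem \ref{convergence-thm-2}) this steady state is exactly $u_{\underline{c}}^{*}$, giving $u_{\overline{c}}^{*}\leq u_{\underline{c}}^{*}$. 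Strictness then follows from the strong maximum principle: the difference $w:=u_{\underline{c}}^{*}-u_{\overline{c}}^{*}\geq0$ satisfies a linear elliptic inequality $d\D w+(\text{bounded})w\leq-(\overline{c}-\underline{c})\Ind_{B}u_{\overline{c}}^{*}\leq0$ with a nontrivial right-hand side (since $u_{\overline{c}}^{*}>0$ on $B$), so $w$ cannot vanish anywhere; thus $u_{\overline{c}}^{*}<u_{\underline{c}}^{*}$ in $\overline{\O}$.

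For the first inequality $u_{\infty}^{*}\leq u_{\overline{c}}^{*}$, the natural route is to fix an arbitrary $c>0$ and show $u_{\infty}^{*}\leq u_{c}^{*}$. Here I would use the zero-extension convention: $u_{\infty}^{*}$, extended by $0$ into $B$, lies in $H_{B}^{1}(\O)\subset H^{1}(\O)$ and satisfies $d\D u_{\infty}^{*}+f(x,u_{\infty}^{*})=0$ in $\O\setminus\overline{B}$ with $u_{\infty}^{*}=0$ on $\p B$. One checks that this extension is a weak subsolution of the degradation steady-state problem: on $\O\setminus\overline{B}$ it solves the equation exactly, on $B$ it is identically $0$ (so $d\D u_{\infty}^{*}-cu_{\infty}^{*}=0$ there), and across $\p B$ the outward normal derivative of $u_{\infty}^{*}$ from the $\O\setminus\overline{B}$ side is negative (since $u_{\infty}^{*}\in X^{++}$ by Theorem \ref{globaldestruction}(i)), which is the correct sign for the distributional Laplacian of the glued function to carry a nonnegative singular (surface) part — hence $d\D u_{\infty}^{*}+\Ind_{\O\setminus B}f(x,u_{\infty}^{*})-c\Ind_{B}u_{\infty}^{*}\geq0$ in $\mathcal{D}'(\O)$. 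Comparing this subsolution against a large-constant supersolution and invoking uniqueness of $u_{c}^{*}$ again yields $u_{\infty}^{*}\leq u_{c}^{*}$; applying this with $c=\overline{c}$ closes the chain. (Alternatively, and perhaps more cleanly, one can pass to the limit: by Theorem \ref{convlem4}, $u_{c}^{*}\to u_{\infty}^{*}$ as $c\to\infty$, and the already-established monotonicity $c\mapsto u_{c}^{*}$ decreasing shows $u_{\infty}^{*}=\inf_{c}u_{c}^{*}\leq u_{\overline{c}}^{*}$; but this risks a circularity if Lemma \ref{monotonicallydecreasing-ss} is used en route to Theorem \ref{convlem4}, so I would prefer the direct subsolution argument.)

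The main obstacle I anticipate is the subsolution property of the zero-extended $u_{\infty}^{*}$ across the interface $\p B$ — making rigorous, in the weak (distributional or $H^{1}$-variational) formulation, that gluing a function with a strictly negative inward-pointing normal derivative to the zero function produces a subsolution rather than a supersolution. This is a sign bookkeeping argument: integrating $d\grad u_{\infty}^{*}\cdot\grad\varphi$ against a nonnegative test function $\varphi\in C^{\infty}_{c}(\O)$ and integrating by parts separately on $B$ and on $\O\setminus\overline{B}$ produces a boundary term $\int_{\p B}d\,\frac{\p u_{\infty}^{*}}{\p\nu_{B}}\varphi$ with $\nu_{B}$ the outward normal of $\O\setminus\overline{B}$ (equivalently inward to $B$), and one must verify that this term has the sign consistent with $u_{\infty}^{*}$ being a subsolution of the $c$-problem. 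Once this is pinned down, everything else (large-constant supersolutions via Assumption \ref{assumptionf}(3), monotone iteration, uniqueness via Theorems \ref{globaldegradation} and \ref{globaldestruction}, and the strong maximum principle for strictness) is routine.
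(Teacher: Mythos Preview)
Your argument is correct, and for the strict inequality $u_{\overline c}^{*}<u_{\underline c}^{*}$ it matches the paper's proof (subsolution/strong maximum principle/uniqueness). For the inequality $u_{\infty}^{*}\leq u_{c}^{*}$, however, the paper takes a shorter route that completely avoids your ``main obstacle'': rather than extending $u_{\infty}^{*}$ by zero into $B$ and verifying the distributional subsolution property across $\partial B$, the paper works on $\O\setminus\overline B$ directly. There both $u_{\infty}^{*}$ and $u_{c}^{*}$ solve the \emph{same} equation $d\D u+f(x,u)=0$ with the same Neumann condition on $\partial\O$, and on the inner boundary $\partial B$ one has $u_{\infty}^{*}=0<u_{c}^{*}$. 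The comparison principle (driven by concavity/uniqueness, as in \cite[Proposition 2.2]{Salmaniw2022}) on $\O\setminus\overline B$ then yields $u_{\infty}^{*}\leq u_{c}^{*}$ there, and the inequality in $\overline B$ is trivial since $u_{\infty}^{*}\equiv0$ while $u_{c}^{*}>0$. Your interface computation would go through (the Hopf sign $\frac{\partial u_{\infty}^{*}}{\partial\nu}<0$ on $\partial B$ is exactly what makes the surface term nonnegative), but it is simply unnecessary once you realize the comparison can be done on the subdomain where both functions are classical solutions of the same PDE.
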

\begin{proof}
Since $\mu_{1,\i} < 0$, $u_{c}^{*}$ exists for all $c>0$. Then, it is easy to see that $u_{\overline{c}}^* \leq u_{\underline{c}}^*$ by the concavity of $f(x,\cdot)$ and the strong maximum principle for strong solutions, see the proof of e.g. \cite[Proposition 2.2]{Salmaniw2022}. The the strong maximum principle and Hopf's lemma implies that either $u_{\overline{c}}^* < u_{\underline{c}}^*$ or $u_{\overline{c}}^* \equiv u_{\underline{c}}^*$ in $\overline{\O}$. By the uniqueness of the steady state solution, the second cannot hold, and the strict inequality follows.

Let $c>0$. Note that both $u_{c}^{*}$ and $u_{\infty}^{*}$ satisfy 
\begin{equation*}
    \begin{cases}
0 = d \D u + f(x,u), & \text{in}\quad \O \setminus \overline{B}, \\
\frac{\p u}{\p \nu } = 0, & \text{on}\quad  \p \O.
\end{cases}
\end{equation*}
As $u_\i^* = 0 < u_c ^*$ on $\p B$, we apply the comparison principle to conclude $u_\i^* \leq u_c ^*$ in $\overline{\O} \setminus B$. Since $u_\i^*$ is identified with its extension by zero in $B$, we automatically have that $u_\i^{*} < u_c ^*$ in $\overline{B}$ by the positivity of $u_c^*$. Hence, $u_\i^*  \leq u_c ^*$ in $\overline{\O}$. 
\end{proof}

We now establish Theorem \ref{convlem4}. Its proof is instructive for the more difficult parabolic analog (i.e. Theorem \ref{finalscalarconvergence}) as we require fewer estimates to conclude our desired result.

\begin{proof}[Proof of Theorem \ref{convlem4}]

The existence and uniqueness of positive steady states follow from Theorems \ref{globaldegradation} and \ref{globaldestruction}. It remains to show the convergence result.

Lemma \ref{monotonicallydecreasing-ss} asserts that $\{u_c ^*\}_{c\gg1}$ is a decreasing sequence of functions bounded below by $u_\i^*$. Hence, the pointwise limit $u^{*}:=\lim_{c\to\infty}u_c ^*$ exists in $\overline{\Omega}$ and is nontrivial. This is our candidate solution to the limiting problem. 

Multiplying the equation satisfied by the steady state $u_c ^*$ by itself, integrating over $\O$ and integrating by parts, we obtain
\begin{equation}\label{eqn-2022-04-03}
    d \int_\O \magg{\grad u_c ^*} = \int_{\O \setminus B} f(\cdot,u_c^*)u_c^* - c \int_B (u_c^*)^2 \leq \norm{f_u (\cdot,0)}_{L^\i (\O\setminus\overline{B})} \norm{u_c^*}_{L^2 (\O)}^2,
\end{equation}
where used Assumption \ref{assumptionf} (3) in the inequality. Hence, $\{u_c^*\}_{c\gg1}$ is bounded in $H^1 (\O)$. Consequently, there exists a subsequence, still denoted by $u_{c}^{*}$, such that 
\begin{equation}\label{convergence-2022-04-03}
    \lim_{c\to\infty}u_c ^* = u^*\quad\text{strongly in}\,\, L^2 (\O)\quad\text{and}\quad \text{weakly in}\,\, H^1 (\O).
\end{equation}
In particular, for any $\phi \in H_B^1 (\O)$ (considered as an element in $H^{1}(\Omega)$ after  zero extension in $B$), we have $- c \int_{B} u_c ^* \phi = 0$ for all $c\gg1$, and
\eq{
\lim_{c\to\infty}d \int_\O \grad u_c ^* \cdot \grad \phi = d \int_\O \grad u^* \cdot \grad \phi , \quad\lim_{c\to\infty}\int_\O f(\cdot,u_c ^*) \phi = \int_\O f(\cdot, u^*) \phi.
}
Therefore, $u^*$ satisfies $-d \D u^*  = f(x,u^*)$ in $\O\setminus\overline{B}$ in the weak sense. 

We now show that $u^{*}\in H_{B}^{1}(\Omega)$. We see from the equality in \eqref{eqn-2022-04-03} that
\eq{
c \int_{B} (u_c ^*)^2 &\leq \norm{f_u (\cdot,0)}_{L^\i (\O\setminus\overline{B})} \int_\O (u_c ^*)^2.
}
As $\sup_{c\gg1}\int_\O (u_c ^*)^2<\infty$, we arrive at $\lim_{c\to\infty}\int_{B} (u_c ^*)^2=0$. It then follows from the convergence in \eqref{convergence-2022-04-03} or the monotone convergence theorem that $\int_{B}(u^{*})^{2}=0$, and hence, $u^*=0$ a.e. in $B$. In particular, $u^* \in H_B^1(\O)$. 

Combining these results, we conclude from the elliptic regularity theory that $u^*$ is a steady state to \eqref{scalareqn-1}, and therefore, $u^{*}=u_\i ^*$ by the uniqueness of positive steady states. As $u^{*}\in C(\overline{\Omega})$, we conclude the result from Dini's theorem.
\end{proof}

\begin{remark}
We cannot expect a stronger notion of convergence over the entire domain $\O$ in a classical sense than what was shown above. Informally, this can be made intuitive if one considers the fact that $\frac{\p u^*_{\infty}}{\p \nu }$ is negative along $\p B$ while $u^*_{\infty}$ is identically zero inside of $B$. Hence, we expect the classical derivative of $u^*_{\infty}$ to be discontinuous along $\p B$. However, stronger notions of convergence are readily established away from the boundary of $B$ through the usual arguments.
\end{remark}


\subsection{From habitat degradation to habitat destruction: general solutions}

This subsection is devoted to the proof of Theorem \ref{finalscalarconvergence}. We prove several lemmas before doing so. The following addresses the monotonicity of solutions in $c$.

\begin{lem}\label{monotonicallydecreasing}
Assume $0 < \underline{c} < \overline{c}$. If $u_{\underline{c}}(\cdot,0)=u_{\overline{c}}(\cdot,0)\in C^{+}(\overline{\Omega})\setminus\{0\}$, then $u_{\underline{c}}>u_{\overline{c}}>0$ in $\overline{\Omega}\times(0,\infty)$.
\end{lem}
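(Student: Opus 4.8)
The plan is a comparison argument exploiting that a larger decline rate on $B$ makes the solution a subsolution. Concretely, $u_{\underline{c}}$ should be a supersolution of problem \eqref{scalareqn-2} with the larger parameter $\overline{c}$; since the two solutions share the same initial datum and the same Neumann condition, the comparison principle will give $u_{\overline{c}}\leq u_{\underline{c}}$, after which the strong maximum principle upgrades this to a strict inequality for $t>0$. The only genuine obstacle is that $\Ind_{\O\setminus B}f(x,u)-c\Ind_{B}u$ is discontinuous across $\p B$, so $u_{\underline{c}}$ and $u_{\overline{c}}$ are merely strong solutions; accordingly the comparison principle, the strong maximum principle, and Hopf's lemma must be invoked in their $W^{2,1}_p/C^{1+\a,(1+\a)/2}$ form, exactly as already done in Section \ref{apps}. (Here and below I use that on each finite time interval $u_{\underline{c}},u_{\overline{c}}$ are bounded and that, by the concavity and regularity assumptions on $f$ together with $f(\cdot,0)\equiv0$, any mean-value linearization of $f$ along these bounded solutions produces an $L^\infty$ coefficient.)

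First I would record that both $u_{\underline{c}}$ and $u_{\overline{c}}$ are strictly positive on $\overline{\O}\times(0,\i)$. Writing $\Ind_{\O\setminus B}f(x,u_c)=\Ind_{\O\setminus B}\,g_c(x,t)\,u_c$ with $g_c(x,t):=\int_0^1 f_u\big(x,s\,u_c(x,t)\big)\,{\rm d}s\in L^\infty(\O\times(0,T))$ for every $T>0$, the function $u_c$ solves a linear parabolic equation with bounded coefficients; it is nonnegative because $0$ solves \eqref{scalareqn-2} and $u_c(\cdot,0)\gneq0$, and it is not identically zero, so the strong maximum principle together with Hopf's lemma (in the strong-solution form already used in the proofs of Theorem \ref{globaldestruction} and Lemma \ref{monotonicallydecreasing-ss}) gives $u_c>0$ on $\overline{\O}\times(0,\i)$.

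Next, since $u_{\underline{c}}$ solves \eqref{scalareqn-2} with parameter $\underline{c}$ and $\overline{c}>\underline{c}$, $u_{\underline{c}}\geq0$,
\begin{equation*}
\partial_t u_{\underline{c}}-d\D u_{\underline{c}}-\Ind_{\O\setminus B}f(x,u_{\underline{c}})+\overline{c}\,\Ind_{B}u_{\underline{c}}=(\overline{c}-\underline{c})\,\Ind_{B}u_{\underline{c}}\geq0\quad\text{in }\O\times(0,\i),
\end{equation*}
so $u_{\underline{c}}$ is a supersolution of \eqref{scalareqn-2} with parameter $\overline{c}$, carrying the same initial datum and Neumann condition as $u_{\overline{c}}$; the comparison principle then yields $u_{\overline{c}}\leq u_{\underline{c}}$ on $\overline{\O}\times[0,\i)$. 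To obtain strictness, set $w:=u_{\underline{c}}-u_{\overline{c}}\geq0$ and write $f(x,u_{\underline{c}})-f(x,u_{\overline{c}})=a(x,t)\,w$ with $a(x,t):=\int_0^1 f_u\big(x,s\,u_{\underline{c}}+(1-s)u_{\overline{c}}\big)\,{\rm d}s\in L^\infty$; regrouping the linear terms, $w$ solves
\begin{equation*}
\partial_t w-d\D w-\big(\Ind_{\O\setminus B}a-\underline{c}\,\Ind_{B}\big)w=(\overline{c}-\underline{c})\,\Ind_{B}\,u_{\overline{c}}\quad\text{in }\O\times(0,\i),
\end{equation*}
with $\frac{\p w}{\p\nu}=0$ on $\p\O$ and $w(\cdot,0)=0$, where by the positivity just established the right-hand side is nonnegative and strictly positive on $B\times(0,\i)$. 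If $w$ vanished at some interior point $(x_0,t_0)$ with $t_0>0$, the strong maximum principle (using connectedness of $\O$) would force $w\equiv0$ on $\O\times(0,t_0]$, contradicting positivity of the source on the nonempty set $B\times(0,t_0)$; if $w$ vanished at a point of $\p\O$ while being positive in $\O$, Hopf's lemma would give $\frac{\p w}{\p\nu}<0$ there, contradicting the Neumann condition. Hence $w>0$ on $\overline{\O}\times(0,\i)$, i.e. $u_{\underline{c}}>u_{\overline{c}}>0$, which is the assertion of the lemma. I expect the strictness step — in particular justifying the strong maximum principle and Hopf's lemma for the strong (non-$C^2$) solutions of the discontinuous problem — to be the only point requiring care; everything else is a routine comparison computation.
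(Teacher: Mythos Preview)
Your proof is correct and follows essentially the same route as the paper: a comparison argument to get $u_{\overline{c}}\leq u_{\underline{c}}$, followed by the strong maximum principle and Hopf's lemma (for strong solutions) to upgrade to strict inequality. The only cosmetic difference is that the paper derives the weak inequality by testing the equation for $w=u_{\overline{c}}-u_{\underline{c}}$ against $w^{+}$ and applying Gr{\"o}nwall (rather than invoking the comparison principle as a black box), and phrases the strictness contradiction as ``$w\equiv0$ violates uniqueness of solutions'' rather than ``$w\equiv0$ is incompatible with the positive source $(\overline{c}-\underline{c})\Ind_{B}u_{\overline{c}}$''---these amount to the same thing.
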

\begin{proof}
Set $w :=u_{\overline{c}} - u_{\underline{c}}$ and $w^+ := \max \{ 0, w\}$. Note that $u_{\overline{c}}$ and $u_{\underline{c}}$ are bounded. This together with the regularity assumption on $f$ implies the existence of some $K>0$ such that
\eq{
\frac{1}{2} \frac{d}{dt} \int_\O ( w^+) ^2 &= - d \int_\O \magg{\grad w^+} + \int_\O w^+ ( f(\cdot,u_{\overline{c}}) - f(\cdot,u_{\underline{c}}) )\leq K \int_\O (w^+)^2.
}
Gr{\"o}nwall's inequality implies that $w^+ = 0$ a.e. in $Q_T$ and hence $u_{\overline{c}} \leq u_{\underline{c}}$ holds in all of $Q_T$ by the smoothness of the solutions. Then, since $u_c \in C^1 ( (0,\i); C^+ (\overline{\O}))$, the strong maximum principle for strong solutions applies, see e.g. \cite{arena}. Indeed, if there exists a point $(x_0,t_0) \in Q_T$ such that $w = 0$, it follows that $w \equiv 0$ in $\O$ for all $t \in (0,t_0)$, a contradiction to the uniqueness of solutions. Finally, if there exists a point $x_0 \in \p \O$ such that $w(x_0, t_0) = 0$ for some $t_0 > 0$, Hopf's lemma implies that $\frac{\p w}{\p \nu } (x_0,t_0) > 0$, a contradiction to the homogeneous Neumann boundary condition satisfied by $w$ along $\p \O$. Hence, $w<0 \Rightarrow u_{\overline{c}} < u_{\underline{c}}$ in $\overline{\O}\times(0,\infty)$. The conclusion $u_{\overline{c}}>0$ in $\overline{\O}\times(0,\infty)$ follows from similar arguments.
\end{proof}

As an immediate consequence of Theorem \ref{globaldegradation} and Lemma \ref{monotonicallydecreasing}, we have the following result.

\begin{cor}\label{cor-uniform-converge-to-0}
Suppose $\mu_{1,\i} > 0$. Then, for any initial data $u_c (0,\cdot) \in C^+ (\Omega) \setminus \{ 0 \}$ independent of $c\gg1$, there holds
$$
\lim_{t \to \i} \sup_{c \gg 1} \norm{u_c (\cdot,t)}_{C(\overline{\O})} = 0.
$$
In fact, for any $\tilde c \in (c^*,\infty)$ there exist $r = r(\tilde c)>0$ and $M>0$ (depending only on $\tilde{c}$ and the common initial data) such that
$$
\sup_{c \geq \tilde c}\norm{u_c(\cdot,t)}_{C(\overline{\Omega})} \leq M e^{-r t}, \quad \forall\, t \geq0.
$$
\end{cor}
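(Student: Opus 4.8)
The statement asserts a uniform-in-$c$ exponential decay to $0$, for all $c \gg 1$, under the hypothesis $\mu_{1,\infty} > 0$ and with common initial data. The key structural facts available are: (a) Theorem \ref{globaldegradation}(ii), which gives $u_c(\cdot,t) \to 0$ in $C(\overline{\Omega})$ for each fixed $c > c^*$; (b) Lemma \ref{monotonicallydecreasing}, which gives $u_c(\cdot,t) \le u_{\tilde c}(\cdot,t)$ for all $c \ge \tilde c$ when the initial data agree; and (c) Theorem \ref{convergence-thm-2}(1), which gives $\mu_{1,c} \nearrow \mu_{1,\infty} > 0$. Fact (b) reduces everything to a single comparison solution: it suffices to prove the exponential bound for $u_{\tilde c}$ alone, since $\sup_{c \ge \tilde c}\|u_c(\cdot,t)\|_{C(\overline{\Omega})} = \|u_{\tilde c}(\cdot,t)\|_{C(\overline{\Omega})}$ by monotonicity (the supremum is attained at $c = \tilde c$, the solution being continuous and monotone decreasing in $c$).

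\textbf{First step: extinction rate for the single problem \eqref{scalareqn-2} with $c = \tilde c$.} Fix $\tilde c > c^*$, so $\mu_{1,\tilde c} > 0$ (using that $c \mapsto \mu_{1,c}$ is increasing with limit $\mu_{1,\infty} > 0$, and $c^*$ is by definition the value where the sign changes). The plan is a standard linear super-solution argument: set $r := \mu_{1,\tilde c} > 0$ and build a super-solution of the form $\overline{u}(x,t) = A e^{-rt}\phi_{1,\tilde c}(x)$, where $\phi_{1,\tilde c}$ is the principal eigenfunction of \eqref{MainEig2-2} with weight $m_{\tilde c}$. Since $f(x,u) \le f_u(x,0)u$ for $u \ge 0$ by the concavity Assumption \ref{assumptionf}(3) (and $f(x,0) = 0$), the nonlinear reaction term is dominated by its linearization, so $\Ind_{\Omega\setminus B}f(x,\overline{u}) - \tilde c\,\Ind_B \overline{u} \le m_{\tilde c}\overline{u}$; a direct computation then shows $\overline{u}_t - d\Delta\overline{u} - [\Ind_{\Omega\setminus B}f(x,\overline{u}) - \tilde c\,\Ind_B\overline{u}] \ge \overline{u}_t - d\Delta\overline{u} - m_{\tilde c}\overline{u} = 0$, and $\partial\overline{u}/\partial\nu = 0$ on $\partial\Omega$. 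Choosing $A$ large enough that $A\phi_{1,\tilde c} \ge u_{\tilde c}(\cdot,0)$ on $\overline{\Omega}$ (possible since $\phi_{1,\tilde c} > 0$ on $\overline{\Omega}$ and the common initial datum is bounded) and invoking the comparison principle for \eqref{scalareqn-2} yields $0 \le u_{\tilde c}(x,t) \le A e^{-rt}\phi_{1,\tilde c}(x) \le M e^{-rt}$ with $M := A\|\phi_{1,\tilde c}\|_{C(\overline{\Omega})}$, and $M, r$ depend only on $\tilde c$ and the common initial data.

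\textbf{Second step: pass to the supremum.} Combine the bound on $u_{\tilde c}$ with Lemma \ref{monotonicallydecreasing}: for every $c \ge \tilde c$ and every $t > 0$, $u_c(\cdot,t) \le u_{\tilde c}(\cdot,t)$ pointwise on $\overline{\Omega}$, hence $\|u_c(\cdot,t)\|_{C(\overline{\Omega})} \le \|u_{\tilde c}(\cdot,t)\|_{C(\overline{\Omega})} \le M e^{-rt}$; at $t = 0$ the common initial data give equality of norms and the bound is $\le M$ after enlarging $M$ if needed. Taking the supremum over $c \ge \tilde c$ gives the claimed inequality, and letting $t \to \infty$ gives the first display. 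I would also remark that the bound holds for the borderline parameters in $(c^*, \tilde c)$ is not needed — only $c \ge \tilde c$ is asserted — which is precisely why a single comparison solution suffices.

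\textbf{Main obstacle.} The only genuinely delicate point is the regularity/applicability of the comparison principle in the first step: because $m_{\tilde c}$ is discontinuous across $\partial B$, the solution $u_{\tilde c}$ is only a strong solution in $C^{1+\alpha,(1+\alpha)/2}$, not classical, and the super-solution $\overline{u}$ inherits the same limited regularity through $\phi_{1,\tilde c} \in W^{1,p}(\Omega)$ (cf. the regularity noted around \eqref{gwp-degradation} and in Theorem \ref{globaldegradation}(i)). One must therefore invoke the comparison principle in the strong (Lax–Milgram/$W^{2,p}$) setting rather than the classical one — this is the same framework already used for \eqref{scalareqn-2} in \cite{Salmaniw2022} and in Lemma \ref{monotonicallydecreasing}, so it is available, but the statement should cite it explicitly. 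Everything else is routine: the eigenfunction positivity up to the boundary follows from the Neumann principal eigenvalue theory (Proposition \ref{deglineig-app}), and the constants' dependence only on $\tilde c$ and the initial data is transparent from the construction.
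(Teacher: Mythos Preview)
Your proof is correct and follows essentially the same approach as the paper: construct the super-solution $W = M e^{-\mu_{1,\tilde c}t}\phi_{1,\tilde c}$ for the single problem at $c=\tilde c$ (using concavity to dominate the nonlinearity by its linearization), then invoke Lemma~\ref{monotonicallydecreasing} to pass the bound to all $c\ge\tilde c$. Your write-up is in fact more explicit than the paper's on the super-solution verification and the strong-solution comparison principle, but the argument is the same.
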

\begin{proof}
Let $c^{*}$ be as in Theorem \ref{globaldegradation}. Then, $\mu_{1,c} > 0$ for all $c > c^*$. Fix $\tilde c > c^*$ and set $W := M e^{-\mu_{1,\tilde c} t} \phi_{1,\tilde c}$, where $\phi_{1,\tilde c}$ is the positive eigenfunction to problem \eqref{MainEig2-2} for $m_{\tilde c}$. It is easy to see that $W$ is a super solution to $u_{\tilde c}$ whenever $M$ is chosen large enough that $u_{\tilde c}(\cdot,0) \leq M \phi_{1,\tilde{c}}$. One immediately has that $\lim_{t\to \i}\norm{u_{\tilde c} (\cdot,t)}_{C(\overline{\O})} \leq M \max_{\Omega}\phi_{\tilde c} \lim_{t \to \infty} e^{- \mu_{1,\tilde c} t} =  0$. The result then follows from Lemma \ref{monotonicallydecreasing}.
\end{proof}

The next result addresses the uniform convergence over finite time intervals. As the arguments are technical but somewhat standard, we include the proof in the Supplementary Materials \ref{SM:proof_of_thm}.

\begin{lem}\label{convcor1}
If $u_{c}(\cdot,0)=u_{\infty}(\cdot,0) \in H_B^1 (\O) \cap C^{+}(\overline{\O})$ for all $c\gg1$ and \\ $\supp(u_{\infty}(\cdot,0)) \Subset \overline{\O} \setminus \overline{B}$, then for each $T>0$, 
$$
\lim_{c \to \i}u_c = u_{\infty}\,\,\text{uniformly in}\,\, \overline{\O } \times [0,T].
$$
\end{lem}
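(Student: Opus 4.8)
The plan is to compare $u_c$ with $u_\infty$ on the finite cylinder $Q_T$ by an energy method, exploiting the fact that both solve the same PDE in $\Omega\setminus\overline B$ and that $u_\infty\equiv 0$ in $B$ by our zero-extension convention. First I would record uniform a priori bounds: since the common initial datum is bounded and $f(x,\cdot)$ is strictly concave down with $f(\cdot,0)\equiv 0$, there is a constant $K_0>0$, independent of $c$, such that $0\le u_c,\,u_\infty\le K_0$ on $\overline\Omega\times[0,\infty)$ (any sufficiently large constant is a supersolution for both problems, and $0$ is a subsolution; the supersolution property is uniform in $c$ because the $-c\Ind_B u$ term only helps). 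On the set where the reaction terms differ, namely $B$, we will show $u_c$ is small; off $B$ the reaction terms agree and a Grönwall argument closes the estimate.

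Next I would set $w := u_c - u_\infty$ (with $u_\infty$ extended by zero). Testing the weak formulations against $w$ and subtracting, one gets, for a.e.\ $t\in(0,T]$,
\begin{equation*}
\frac12\frac{d}{dt}\int_\Omega w^2 + d\int_\Omega|\grad w|^2
= \int_{\Omega\setminus B}\bigl(f(\cdot,u_c)-f(\cdot,u_\infty)\bigr)w - c\int_B u_c\,w + \int_B f(\cdot,u_\infty)\,\Ind_{\{\cdot\}}\,,
\end{equation*}
where the last term vanishes since $u_\infty=0$ in $B$, and in $B$ we have $w=u_c$ there, so $-c\int_B u_c w = -c\int_B u_c^2\le 0$. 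Using the local Lipschitz bound on $f$ coming from Assumption \ref{assumptionf} (valid on $[0,K_0]$), the first integral is bounded by $K\int_\Omega w^2$. Hence $\frac{d}{dt}\int_\Omega w^2 \le 2K\int_\Omega w^2$, and since $w(\cdot,0)=0$, Grönwall gives $\int_\Omega w(\cdot,t)^2 = 0$... which would be too strong; the resolution is that the testing identity is only an \emph{inequality} in the direction that drops $-c\int_B u_c^2$, so what one actually controls is $\int_\Omega w^2$ in terms of a quantity that does not obviously vanish. The correct route is instead to bound $c\int_0^T\!\int_B u_c^2$: multiply the equation for $u_c$ by $u_c$, integrate over $Q_T$, and use the uniform $L^\infty$ bound together with $f$'s concavity to get $c\int_0^T\!\int_B u_c^2 \le C(T)$ uniformly in $c$, whence $\int_0^T\!\int_B u_c^2 \le C(T)/c \to 0$.

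With that in hand, the argument is: extract a subsequence along which $u_c\rightharpoonup v$ weakly in $H^1(Q_T)$ (bounded by the energy estimate \eqref{eqn-2022-04-03}-type bound adapted to the parabolic setting) and strongly in $L^2(Q_T)$ and a.e.; the bound $\int_0^T\!\int_B u_c^2\to0$ forces $v=0$ a.e.\ in $B\times(0,T)$, so $v\in H^1_B(Q_T)$; passing to the limit in the weak formulation against test functions in $H^1_B(Q_T)$ (the $-c\int_B u_c\phi$ term drops since such $\phi$ vanish on $B$) shows $v$ is a weak, hence by parabolic regularity classical, solution of \eqref{scalareqn-1} with the given initial datum, so $v=u_\infty$ by uniqueness. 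Since the limit is independent of the subsequence, the whole family converges. Finally, to upgrade $L^2(Q_T)$ convergence to uniform convergence on $\overline\Omega\times[0,T]$, I would invoke parabolic $L^p$ and Schauder estimates that are uniform in $c$ on $Q_T$ (the right-hand sides $\Ind_{\Omega\setminus B}f(\cdot,u_c)-c\Ind_B u_c$ are uniformly bounded in $L^\infty(Q_T)$ once one has the uniform $L^\infty$ bound on $u_c$ and the bound $c\,u_c\Ind_B$ is controlled by testing against the solution — more carefully, $cu_c\Ind_B$ is bounded in $L^2(Q_T)$, giving uniform $W^{2,1}_2$ bounds and hence compactness in $C(\overline\Omega\times[0,T])$ via Aubin–Lions), so the $L^2$ limit is in fact a uniform limit.

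The main obstacle is the $c$-dependent reaction term $-c\Ind_B u_c$: naively it is unbounded as $c\to\infty$, so one cannot get uniform-in-$c$ Schauder estimates directly. The key is the observation that although $c$ is large, $u_c$ is correspondingly small on $B$ — quantitatively, $c\|u_c\|_{L^2(B\times(0,T))}^2\le C(T)$ — so the product $c\,u_c\Ind_B$ stays bounded in $L^2(Q_T)$ uniformly in $c$. Turning this $L^2$-in-spacetime control into the compactness needed for uniform convergence (rather than merely $L^2$ convergence) is the delicate point, and is where the restriction $\supp(u_\infty(\cdot,0))\Subset\overline\Omega\setminus\overline B$ is used: it guarantees the initial data are compatible with the $H^1_B$ structure uniformly in $c$, so no initial-layer obstruction arises at $\partial B$.
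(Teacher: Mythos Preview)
Your overall architecture --- energy bounds, extract a weak limit in $H^1(Q_T)$, identify it with $u_\infty$, then upgrade --- matches the paper's, and your observation that testing against $u_c$ gives $c\int_0^T\!\int_B u_c^2\le C(T)$ is in fact a cleaner route to $u_c\to 0$ in $L^2(B\times(0,T))$ than what the paper does. However, your upgrade to uniform convergence contains a genuine error. From $c\|u_c\|_{L^2(B\times(0,T))}^2\le C(T)$ one obtains $\|u_c\|_{L^2(B\times(0,T))}\le\sqrt{C/c}$, and hence $\|c\,u_c\,\Ind_B\|_{L^2(Q_T)}\le\sqrt{Cc}\to\infty$: only $\sqrt{c}\,u_c\,\Ind_B$, not $c\,u_c\,\Ind_B$, is bounded in $L^2$. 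So the right-hand side of the equation for $u_c$ is \emph{not} uniformly bounded in $L^2(Q_T)$, the uniform $W^{2,1}_2(Q_T)$ bound fails, and the Aubin--Lions compactness into $C(\overline\Omega\times[0,T])$ is unavailable.

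The paper bypasses this completely. After identifying the $L^2(Q_T)$ limit as $u_\infty$, it invokes the strict monotonicity $c_1<c_2\Rightarrow u_{c_2}<u_{c_1}$ from Lemma~\ref{monotonicallydecreasing}, so that $u_c$ decreases pointwise to its limit; since the limit $u_\infty$ (extended by zero in $B$) is continuous on $\overline\Omega\times[0,T]$, Dini's theorem upgrades the convergence to uniform with no appeal to higher-order regularity across $\partial B$. As a secondary point, the strong $L^2(Q_T)$ compactness you need requires a uniform bound on $\partial_t u_c$ in $L^2(Q_T)$, which does not follow from the $u_c$-test; the paper obtains it by mollifying the indicators and testing against $\partial_t u_c^\varepsilon$, and it is precisely here that the hypothesis $u_0\equiv 0$ in $B$ is used, to kill the term $\tfrac{c}{2}\int_B u_0^2$ that would otherwise blow up.
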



Next we treat semi-infinite time intervals $(T,\infty)$.

\begin{lem}\label{uniconvscalar}
Assume $\mu_{1,\i} < 0$. If $u_{c}(\cdot,0)=u_{\infty}(\cdot,0) \in H_B^1 (\O) \cap C^{+}(\overline{\O})$ for all $c\gg1$ and $\supp(u_{\infty}(\cdot,0)) \Subset \overline{\O} \setminus \overline{B}$, then there exist $r>0$ and $M=M(u_{\infty})>0$ such that
\eq{
\norm{u_c(\cdot,t) - u_c ^*}_{C( \overline{\O})}\leq M e^{- rt} + \norm{u^*_{\infty} - u_c ^*}_{C( \overline{\O})},\quad \forall t>0\,\,\text{and}\,\,c\gg1.
}
\end{lem}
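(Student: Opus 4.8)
The plan is to exploit the strongly monotone structure of the degradation semiflow together with the exponential attractivity of $u_c^*$, taking care that all constants be controlled uniformly in $c$. First I would fix $c \gg 1$ large enough that $\mu_{1,c} < 0$ and $u_c^*$ exists (possible since $\mu_{1,\infty} < 0$ forces $\mu_{1,c} < \mu_{1,\infty} < 0$ for all $c$ by Theorem \ref{convergence-thm-2}(1)), and compare the solution $u_c$ to sub- and super-solutions built from the principal eigenfunction $\phi_{1,c}$ of \eqref{MainEig2-2}. The key point is that, near the steady state, the linearization of \eqref{scalareqn-2} at $u_c^*$ has a strictly negative principal eigenvalue bounded away from $0$ \emph{uniformly in $c$}: indeed, by concavity (Assumption \ref{assumptionf}(3)) the weight $\Ind_{\O\setminus B} f_u(\cdot, u_c^*) - c\Ind_B$ lies strictly below $m_c$, and one can extract a uniform gap using that $u_c^* \geq u_\infty^* $ (Lemma \ref{monotonicallydecreasing-ss}) so the linearized operators are uniformly ``more stable'' than the one at $u_\infty^*$, whose principal eigenvalue is strictly negative. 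Call this uniform rate $-r < 0$.

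Next I would set up the comparison. Write $w_c := u_c - u_c^*$. By Lemma \ref{convcor1} (finite-time uniform convergence) applied at a suitable fixed time $T_0$ chosen via Theorem \ref{globaldestruction}(i) so that $u_\infty(\cdot, T_0)$ is close to $u_\infty^*$, together with Theorem \ref{convlem4} so that $u_c^*$ is close to $u_\infty^*$, I obtain that $\|u_c(\cdot, T_0) - u_c^*\|_{C(\overline\O)}$ is small, say bounded by a constant $\delta_0$ independent of $c \gg 1$, plus the error term $\|u_\infty^* - u_c^*\|_{C(\overline\O)}$. From $t = T_0$ onward I would trap $u_c$ between $u_c^* \pm \big(C e^{-r(t - T_0)} + \|u_\infty^* - u_c^*\|_{C(\overline\O)}\big)\phi_{1,c}$ (suitably normalized) using the sub/super-solution method: the super-solution inequality reduces, after cancelling the steady-state equation, to a differential inequality for the prefactor that closes precisely because of the uniform spectral gap $r$ and the Lipschitz-in-$u$ control of $f$ near $u_c^*$; the linear error contributed by $\|u_\infty^* - u_c^*\|$ is absorbed because $u_\infty^*$ is a genuine supersolution of the destruction problem hence, after zero-extension, a supersolution of \eqref{scalareqn-2} up to the degradation term, and $u_c^* \le u_\infty^*$ only on... — more cleanly, one observes directly that $u_c^* + \|u_\infty^* - u_c^*\|_\infty$ dominates $u_\infty^*$ and runs the comparison through $u_\infty^*$'s stability. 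Folding the fixed interval $[0, T_0]$ into the constant $M$ (it is compact, so $\|u_c - u_c^*\|$ is uniformly bounded there by global $L^\infty$ bounds, Supplementary Materials \ref{appendix-uniform-bounds}) yields the stated estimate for all $t > 0$.

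The main obstacle I anticipate is establishing the \emph{uniform} spectral gap $r > 0$, i.e. that the principal eigenvalue of the linearization of \eqref{scalareqn-2} about $u_c^*$ is bounded above by a negative constant independent of $c$. The naive bound degrades as $c \to \infty$ because the Dirichlet-type penalization $-c\Ind_B$ pushes mass out of $B$ in a singular way; the fix is to compare with the destruction problem's linearization about $u_\infty^*$ — whose principal eigenvalue is strictly negative by Theorem \ref{globaldestruction}(i) and concavity — and to show, via the variational characterization \eqref{variational-characterization-2022-04-02} and the $H^1$-convergence $\phi_{1,c} \to \phi_{1,\infty}$ from Theorem \ref{convergence-thm-2}(2), that the penalized eigenvalues converge to it from below. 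A secondary technical point is the sub-solution side, where one must check that $u_c^* - (\cdots)\phi_{1,c}$ stays nonnegative (or truncate at $0$, which is harmless since $0$ is a subsolution), and that the Lipschitz constant for $f$ near the (uniformly bounded, uniformly positive on $\overline\O\setminus B$) family $\{u_c^*\}$ is itself uniform in $c$ — this follows from Lemma \ref{monotonicallydecreasing-ss} and the regularity in Assumption \ref{assumptionf}.
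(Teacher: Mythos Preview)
Your upper-bound strategy is essentially the paper's Step~1: establish a uniform spectral gap for the linearization at $u_c^*$ (the paper shows $\liminf_{c\to\infty}\mu_1(d,\Ind_{\O\setminus B}f_u(\cdot,u_c^*)-c\Ind_B)>0$ by comparing with the destruction linearization at $u_\infty^*$), then dominate $u_c-u_c^*$ by $\tilde M_1 e^{-r_1 t}\hat\psi_c$. One correction: the eigenfunction you need is that of the linearization at $u_c^*$, not $\phi_{1,c}$ from \eqref{MainEig2-2}; the uniform pointwise bounds on this family (Lemmas~\ref{eig-lowerbound}--\ref{eig-upperbound}) are what let you choose $\tilde M_1$ independent of $c$.

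The lower bound, however, has a genuine gap. Your symmetric trap $u_c^*\pm(\cdots)\phi_{1,c}$ fails from below because concavity is one-sided: for $w=u_c^*-u_c$ one has $w_t-d\Delta w=\Ind_{\O\setminus B}(f(\cdot,u_c^*)-f(\cdot,u_c))-c\Ind_B w\leq(\Ind_{\O\setminus B}f_u(\cdot,u_c)-c\Ind_B)w$, and the coefficient $f_u(\cdot,u_c)$ is time-dependent and, where $u_c$ is small, close to $f_u(\cdot,0)$---whose principal eigenvalue $\mu_{1,c}<0$ gives growth, not decay. Getting close at some $T_0$ via Lemma~\ref{convcor1} does not by itself repair this, since you would still need a uniform-in-$c$ lower barrier on $u_c$ for all $t\geq T_0$ to freeze the coefficient, and that is exactly what you are trying to prove. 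Truncating at $0$ is no help: $0$ is a subsolution, but it is not exponentially attracted to $u_c^*$.

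The paper sidesteps this entirely by decoupling the lower bound from the $c$-family. Using Lemma~\ref{monotonicallydecreasing} (comparing $u_c$ with $u_\infty$ for the same initial data) one has $u_\infty\leq u_c$, hence
\[
u_c^*-u_c\;\leq\;u_c^*-u_\infty\;\leq\;\|u_c^*-u_\infty^*\|_{C(\overline\O)}+(u_\infty^*-u_\infty).
\]
This is where the additive error $\|u_\infty^*-u_c^*\|_{C(\overline\O)}$ in the statement originates. The remaining task is the exponential decay of $u_\infty^*-u_\infty$ for the \emph{single} destruction problem \eqref{scalareqn-1}, which the paper handles by first waiting until $u_\infty(\cdot,t)\geq v\gneqq0$ on $\O\setminus\overline B$ (Theorem~\ref{globaldestruction}), then using the frozen secant coefficient $F(v)=\frac{f(\cdot,u_\infty^*)-f(\cdot,v)}{u_\infty^*-v}$ whose principal eigenvalue $\mu_1(d,F(v),B)>0$ by strict concavity. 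Your parenthetical ``run the comparison through $u_\infty^*$'s stability'' gestures toward this, but the crucial inequality $u_c\geq u_\infty$ is never invoked, and without it there is no bridge from the destruction problem back to the degradation family.
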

\begin{proof}
The conclusion of the lemma follows from the following two steps. Denote by $u_{0} := u_c (\cdot, 0)=u_{\infty}$ the common initial data. 

\paragraph{\bf Step 1} We show the existence of $r_{1}>0$ and $M_{1}=M_{1}(u_{0})>0$ such that 
$$
u_c(\cdot,t)-u^*_c \leq  M_1e^{-r_{1}t},\quad\forall t\geq0\,\,\text{and}\,\, c\gg1.
$$

By Theorem \ref{globaldegradation}, $u_c^*$ exists for all $c>0$.  Denote by $\mu_1 (d,\Ind_{\O \setminus B} f_u (\cdot,u_c^*) - c \Ind_{B})$ the principal eigenvalue of \eqref{MainEig2-2-app} with $h = \Ind_{\O \setminus B} f_u (\cdot,u_c^*) - c \Ind_{B}$, and by $\hat{\psi}_{c}$ the associated positive eigenfunction satisfying the normalization $\int_{\Omega}\hat{\psi}_{c}^{2}$=1. Notice that $\mu_1 (d,\Ind_{\O \setminus B} f_u (\cdot,u_c^*) - c \Ind_{B}) > 0$ for any $c \in (0,\i)$ due to the concavity of $f(x,\cdot)$. We claim that 
\begin{equation}\label{claim-July-05-2022}
    \liminf_{c\to\infty}\mu_1 (d,\Ind_{\O \setminus B} f_u (\cdot,u_c^*) - c \Ind_{B}) > 0.
\end{equation}

Denote by $\mu_1 (\Ind_{\O \setminus B} f_u (\cdot,u_\i ^*) - c \Ind_B)$ the principal eigenvalue of \eqref{MainEig2-2-app} with $h = \Ind_{\O \setminus B} f_u (\cdot,u_\i ^*) - c \Ind_B$. By a minor modification of the proof of Theorem \ref{convergence-thm-2}, it is not difficult to find that 
\begin{equation}\label{a-limit-July-5-2022}
\lim_{c \to \i}\mu_1 (d,\Ind_{\O \setminus B} f_u (\cdot,u_\i ^*) - c \Ind_B) = \mu_1 (d,\Ind_{\O \setminus \overline{B}}f_u (\cdot,u_\i ^*), B),
\end{equation}
where $\mu_1 (d,\Ind_{\O \setminus \overline{B}}f_u (\cdot,u_\i ^*), B)$ is the principal eigenvalue of \eqref{MainEig1-2-app} with\\ $m=\Ind_{\O \setminus \overline{B}} f_u (\cdot,u_\i ^*)$.

By the variational characterization of $\mu_1 (d,\Ind_{\O \setminus B} f_u (\cdot,u_\i ^*)- c \Ind_B)$, we find
\eq{
\mu_1 &(d,\Ind_{\O \setminus B} f_u (\cdot,u_\i ^*) - c \Ind_B) = \inf_{\phi \in H^1 (\O)} {\scriptstyle \left\{ d \int_\O \magg{\grad \phi} - \int_\O \phi^2 (\Ind_{\O \setminus B} f_u (\cdot,u_\i ^*) - c \Ind_B) \ : \ \int_\O \phi^2 = 1  \right\} } \\
&\leq d \int_\O \magg{\grad \hat\psi_c} - \int_\O \hat\psi_c ^2 ( \Ind_{\O \setminus B} f_u (\cdot,u_\i ^*) - c \Ind_B ) \\
&= \mu_1 (d,\Ind_{\O \setminus B} f_u (\cdot,u_c ^*) - c \Ind_B) + \int_{\O\setminus B} \hat\psi_c ^2 ( f_u (\cdot,u_c ^*) - f_u (\cdot,u_\i^*) ).
}
Theorem \ref{convlem4} and the normalization $\int_{\Omega}\hat{\psi}_{c}^{2}$=1 imply that $\lim_{c\to\infty}\int_{\O\setminus B} \hat\psi_c ^2 ( f_u (\cdot,u_c ^*) - f_u (\cdot,u_\i^*) )=0$. It then follows from \eqref{a-limit-July-5-2022} that
\eq{
 \mu_1 (d,\Ind_{\O \setminus \overline{B}}f_u (\cdot,u_\i ^*),B)&=\liminf_{c \to \i}\mu_1 (d,\Ind_{\O \setminus B} f_u (\cdot,u_\i ^*) - c \Ind_B) \nonumber \\
 &\leq \liminf_{c \to \i} \mu_1 (d,\Ind_{\O \setminus B} f_u (\cdot,u_c^*) - c \Ind_B).
}
Since $\mu_1 (d,\Ind_{\O \setminus \overline{B}}f_u (\cdot,u_\i^*),B) > 0$, the claim \eqref{claim-July-05-2022} follows.


Since $u_{0}$ is continuous and compactly supported in $\overline{\Omega}\setminus\overline{B}$, and $\{\hat{\psi}_{c}\}_{c\gg1}$ is locally uniformly positive in $\overline{\Omega}\setminus\overline{B}$ by Lemma \ref{eig-lowerbound}, there exists $\tilde{M}_{1}>0$ such that $u_{0}\leq \tilde{M}_{1}\hat{\psi}_{c}$ for all $c\gg1$. Set
$$
W_{c} := \tilde{M}_1 e^{- \mu_1 (\Ind_{\O \setminus B} f_u (\cdot,u_c^*) - c \Ind_{B}) t} \hat{\psi}_{c}.
$$
It is straightforward to check that $W_{c}$ satisfies
\eq{
(W_{c})_t - d \D W_{c} &=  ( \Ind_{\O \setminus B} f_u (\cdot,u_c^*) - c \Ind_{B} )W_{c} \quad\text{in}\quad\Omega\times(0,\infty).
}
Note that in $\Omega\times(0,\infty)$, $w_{c}:= u_c - u^*_c$ obeys
\eq{
(w_{c})_t - d \D w = \Ind_{\O \setminus B} ( f(\cdot,u_c) - f(\cdot, u_c^*) ) - c \Ind_{B} w_{c} \leq  ( \Ind_{\O \setminus B} f_u (\cdot,u_c^*) - c \Ind _B)w_{c},
}
where we used the concavity of $f$ in the inequality. Obviously, both $W_{c}$ and $w_{c}$ satisfy the homogeneous Neumann boundary condition on $\partial\Omega$. Since $w_{c}(\cdot,0) = u_{0} - u_c ^* \leq \tilde{M}_1 \hat\psi_c = W_{c}(\cdot,0)$, we apply the comparison principle to arrive at $w_{c} \leq W_{c}$. Note that Lemma \ref{eig-upperbound} yields $\sup_{c\gg1}\sup_{\Omega}\hat{\psi}_{c}<\infty$. Hence, setting $r_{1}: = \liminf_{c \to\infty} \{ \mu_1 (f_u (\cdot,u_c^*) - c \Ind_{B} \} -\delta> 0$ for some fixed $0<\delta\ll1$ and $M_{1}:=\tilde{M}_{1}\sup_{c\gg1}\sup_{\Omega}\hat{\psi}_{c}+1$, we find
$u_c(\cdot,t) - u_c ^* \leq M_1 e^{- r_{1}t}$ for all $t\geq0$ and $c\gg1$.

\medskip

\paragraph{\bf Step 2} We show the existence of $r_2 > 0$ and $M_2 = M_2(u_{0})>0$ such that 
$$ 
u_c^* - u_c(\cdot,t) \leq M_2e^{- r_{2} t} + \norm{u_\i^* - u^*_c}_{C(\overline{\O})}, \quad \forall t \geq 0 \text{ and } c \gg 1.
$$

As we are treating the lower bound for $u_{c}$, we may assume without loss of generality that $u_0 \leq u_ \i ^*$. Note that Lemma \ref{monotonicallydecreasing} ensures that $u_\i \leq u_c$ for all $c \gg 1$, leading to
\eql{
u_c ^* - u_c(\cdot,t) \leq \norm{u^*_\i - u_c ^*}_{C(\overline{\Omega})} + u ^*_\i - u_\i(\cdot,t), \quad \forall t \geq 0 \text{ and } c \gg 1,
}{ineq1.9999}
where $u_\i$ solves \eqref{scalareqn-1} with initial data $u_0$. Hence, it suffices to derive an exponential-in-time upper bound for  $u ^*_\i - u_\i(\cdot,t)$.

We claim that there exist $t_{0}\gg1$ and $v\in L^\i(\O)$ such that
\begin{equation}\label{claim-2022-09-17}
0\lneqq v\leq u_{\infty}(\cdot,t)\quad\text{in}\quad \Omega\setminus\overline{B},\quad\forall t\geq t_{0}.    
\end{equation}
Indeed, since Theorem \ref{globaldestruction} ensures that $u_\i(\cdot,t) \to u_\i ^*$ uniformly in $\overline{\O}\setminus B$ as $t \to \i$, for some fixed $V \Subset \O \setminus \overline{B}$ there is $t_{0}\gg1$ such that $\inf_{V\times(t_{0},\infty)} u_\i>0$. The claim follows readily.

Set $F(u):=\frac{f(\cdot,u_\i ^*) - f(\cdot,u)}{u_\i ^* - u}$. We show $r_{2}:=\mu_1 (d, F(v), B))>0$. Indeed, since $u_{\infty}(\cdot,t)\leq u_{\infty}^{*}$ for all $t\geq0$ by the choice of the initial data $u_{0}$, we find $v\leq u_{\infty}^{*}$ from \eqref{claim-2022-09-17}. It follows from the concavity of $f$ that $F(v)\leq F(0)$. Noticing that $\mu_{1}(d,F(0),B)=0$ (as $u_{\infty}^{*}$ is exactly the associated eigenfunction), we deduce from Lemma \ref{deglineig-1-app} (ii) and $v\gneqq0$ (by \eqref{claim-2022-09-17}) that $r_{2}=\mu_1 (d, F(v),B) > \mu_1 (d, F(0),B ) = 0$.

Set $W_{\infty}:=M_{2}e^{-r_{2}(t-t_{0})}\hat{\psi}_{\infty}$ in $\Omega\setminus\overline{B}\times[t_{0},\infty)$, where $\hat{\psi}_{\infty}$ is the positive eigenfunction of \eqref{MainEig1-2-app} with $m = F(v)$ associated with the principal eigenvalue $r_{2}=\mu_1 (d, F(v),B)$, and $M_{2}>0$ is such that $u_{\infty}^{*}\leq M_{2}\hat{\psi}_{\infty}$. Such a $M_{2}$ exists due to the positivity of $u_{\infty}^{*}$ and $\hat{\psi}_{\infty}$ in $\overline{\Omega}\setminus\overline{B}$ and the negativity of the outer normal derivative of $u_{\infty}^{*}$ and $\hat{\psi}_{\infty}$ along $\partial B$. It is straightforward to check that $W_\i$ satisfies
$$
(W_\i)_t - d \D W_\i = F(v) W_\i \quad\text{in}\quad (\Omega\setminus\overline{B})\times(t_{0},\infty),
$$
while $w: = u_\i ^* - u_\i$ satisfies
\eq{
w_t - d \D w = \left( \frac{f(x,u_\i ^*) - f(x,u_\i)}{u_\i ^* - u_\i} \right) w \leq F(v) w \quad\text{in}\quad (\Omega\setminus\overline{B})\times(t_{0},\infty),
}
where the inequality follows from \eqref{claim-2022-09-17} and the concavity of $f$. Obviously, both $W_{\infty}$ and $w$ satisfy the homogeneous Neumann boundary condition on $\partial\Omega$ and homogeneous Dirichlet boundary condition on $\partial B$. Since $w(\cdot,t_{0})\leq u_{\infty}^{*}\leq M_{2}\hat{\psi}_{\infty}= W_{\infty}(\cdot,t_{0})$, we apply the comparison principle to find $w (\cdot,t) \leq W_\i(\cdot,t)$ for $t\geq t_{0}$. This can be readily extended to hold for all $t\geq0$ by making $M_{2}$ larger if necessary. The conclusion in this step then follows from \eqref{ineq1.9999}.
\end{proof}

We are ready to prove Theorem \ref{finalscalarconvergence}.

\begin{proof}[Proof of Theorem \ref{finalscalarconvergence}]
Clearly, for any $T>0$,
\begin{equation}\label{4.10.1}
\begin{split}
    A_c &:= \sup_{t \in (0,\i)}\norm{u_c(\cdot,t) - u_{\infty}(\cdot,t)}_{C(\overline{\Omega})} \\
&\leq \sup_{t \in (0,T]} \norm{u_{c}(\cdot,t) - u_{\infty}(\cdot,t)}_{C(\overline{\Omega})} + \sup_{t \in (T,\i)} \norm{ u_{c}(\cdot,t) - u_{\infty}(\cdot,t) }_{C(\overline{\O})} \\
&=: A_c ^1 (T) + A_c ^2 (T).
\end{split}
\end{equation}
By Lemma \ref{convcor1}, 
\begin{equation}\label{over-finite-interval-2022-09-19}
\lim_{c \to \i} A_c ^1 (T) = 0,\quad\forall T>0.    
\end{equation}
To treat $A_c ^2 (T)$, we consider two cases. Denote again by $u_0$ the common initial data. 

\paragraph{\bf Case 1: $\mu_{1,\i} > 0$} It follows from Corollary \ref{cor-uniform-converge-to-0} and Theorem \ref{globaldestruction} (ii) that $\lim_{T\to\infty}\lim_{c\to\infty}A_{c}^{2}(T)=0$, which together with \eqref{4.10.1} and \eqref{over-finite-interval-2022-09-19} yields $\lim_{c\to\infty}A_{c}=0$. 

\medskip

\paragraph{\bf Case 2: $\mu_{1,\i} < 0$} Obviously, for any $T>0$,
\eq{
A_c^2 (T) &\leq \sup_{t \in (T,\i)}\norm{ u_c(\cdot,t)-u_c^*}_{C(\overline{\O})} + \norm{ u_c^*-u_\i ^* }_{C(\overline{\O})}+ \sup_{t \in (T,\i)}\norm{ u_\i ^*-u_{\infty}(\cdot,t)}_{C(\overline{\O})} \\
&=: \tilde A_c^1 (T) + \tilde A_c^2 + \tilde A ^{3} (T).
}
By Lemma \ref{uniconvscalar}, there exist $r>0$ and $M=M(u_{0})>0$ such that $\tilde A_c^1 (T) \leq M e^{- r T} + \tilde A_c ^2$ for all $T>0$ and $c\gg1$. 
Since $\lim_{c \to \i} \tilde A_c ^2 = 0$ by Theorem \ref{convlem4} and $\lim_{T\to\infty}\tilde A^3 (T)=0$ by Theorem \ref{globaldestruction} (1), we find $\lim_{T\to\infty}\lim_{c\to\infty}A_{c}^{2}(T)=0$, which together with \eqref{4.10.1} and \eqref{over-finite-interval-2022-09-19} yields $\lim_{c\to\infty}A_{c}=0$.
\end{proof}

\begin{proof}[Proof of Corollary \ref{cor:exthresh}]
    It is an immediate consequence of Theorems \ref{convergence-thm-2}(1) and \ref{globaldegradation} and Corollary \ref{cor-uniform-converge-to-0}. When $\mu_{1,\infty}>0$, the extinction threshold $c_{0}$ is given by $c^{*}$ from Theorem \ref{globaldegradation}. The lower bound on $c_0$ is obtained by noting that when $c = \frac{1}{\as{B}} \int_{\Omega \setminus B} f_u(x,0) {\rm d}x$, $\mu_{1,c} < 0$ by Proposition \ref{deglineig-app}[(iv)]. 
\end{proof}


\section{Discussion}\label{sec-discussion}

Anthropogenic habitat loss significantly impacts local species, biodiversity, and societies, often through indirect, long-term consequences \cite{Diaz2019, Pimm2014, Pimm2000, jacobson2019global}. Understanding the mechanisms behind species extirpation is crucial.

To this end, we have explored a mathematical framework using reaction-diffusion equations to study the impacts of habitat degradation and destruction. Building on the degradation model from \cite{Salmaniw2022}, we generalized these results to establish a rigorous connection between degradation and destruction. This framework allows for exploring complex relationships between movement strategies, impacts within degraded regions, population growth, and the geometry of degraded/destroyed areas. Empirical data alone cannot fully explain these complex relationships, making mechanistic modeling a valuable complementary strategy. Our uniform convergence results between degradation and destruction problems appear to be the first in the context of spatially explicit models of habitat loss. Some of the convergence results between the eigenvalue problems may be of independent mathematical interest, particularly in handling an unbounded (negative) right hand side.

In addition to the necessary and sufficient conditions for the existence of an extinction threshold, an exponential convergence rate to extirpation beyond the extinction threshold is particularly relevant for practical scenarios of habitat loss. The concept of \textit{extinction debt} \cite{Tilman1994, hanski2002extinction} suggests that consequences of habitat loss may not fully manifest immediately, especially near extinction thresholds. This debt is characterized by a time lag, where the population appears stable over short times but is actually declining. This phenomenon can be understood from our theoretical approach. For $c^*>c_0$ the rate of convergence $r$ found in Theorem \ref{convergence-thm-2} can be identified as precisely $\mu_{1,c^*}$. For any $\varepsilon>0$ we may then compute directly $\mu_{1,c_0 + \varepsilon} = \mu_{1,c_0} - \varepsilon = 0 - \varepsilon = - \varepsilon$. Hence, the rate of population decline can be arbitrarily slow near the extinction threshold $c_0$. While we have shown for a single-species model only, we conjecture similar behavior for multi-species competition models. This leaves open an interesting exploration of habitat loss in the context of biodiversity management.

While habitat fragmentation is beyond the scope of the present work, we acknowledge the importance of studying such mechanistic models with a focus on fragmentation. For empirical studies, we have already applied this model to two experimental designs using the nematode \textit{C. Elegans}. Our spatially explicit approach allows us to examine different configurations of the degraded/destroyed region $B$. While some argue that the total amount of conserved habitat is most critical \cite{Fahrig2013b}, this may not always be the case; in fact, preliminary findings of the forthcoming \cite{Zhang2024b} suggest that locomotion and arrangement plays a key role in the success of a population. As proposed in the introduction, our model invites a detailed analysis of habitat fragmentation effects while keeping the total available habitat fixed, merely rearranging the size and location of patches. While specific to the particular experimental design, these results indicate the promising nature of our modelling approach, at least as applied to species adopting approximately Brownian motion of varying rates as a movement strategy. While this assumption is a limitation of the specific conclusions drawn here, the modelling framework is readily generalized to consider other, possibly more complex, forms of locomotion, or even temporally dynamic landscapes \cite{CantrellCosner2003, Hess1996}. For practical applications, the uniform convergence result between related eigenvalue problems in Theorems \eqref{convergence-thm-2} and \eqref{convergence-thm} allows one to use one model as an approximation of the other; for simulations, problem \eqref{MainEig2-2} more sensible, while problem \eqref{MainEig1-2} may be more appropriate for development of analytical insights. Understanding the convergence rate from $\mu_{1,c}$ to $\mu_{1,\infty}$ as $c$ increases would be of interest.

The mathematical formulation presented here provides a unique opportunity to investigate the consequences of habitat loss on species and biodiversity, including the extinction debt \cite{Tilman1994, Loehle1996, hanski2002extinction}, the habitat-amount hypothesis \cite{Fahrig2013b}, the species-area relationship \cite{Fahrig1996, Fahrig2013b}, and the impact of habitat configuration on survival outcomes \cite{Fahrig2003, Betts1}. We hope to expand on the application of this approach to such questions in future work.


\pagebreak
\begin{center}
\textbf{\large Supplemental Materials: From Habitat Decline to Collapse}
\end{center}
\setcounter{equation}{0}
\setcounter{figure}{0}
\setcounter{table}{0}
\setcounter{section}{0}
\makeatletter
\renewcommand{\thesection}{S\arabic{section}}
\renewcommand{\theequation}{S\arabic{section}.\arabic{equation}}
\renewcommand{\thefigure}{S\arabic{figure}}

\section{Proof of Lemma \ref{convcor1}}\label{SM:proof_of_thm}

\begin{proof}

Fix $T>0$ and denote by $u_{0}$ the common initial data. The proof is done in four steps.

\paragraph{\bf Step 1} 

We show the existence of some constant $M = M(T)>0$ such that
\eql{
\iint_{Q_T} \left( u_c ^2 + \magg{\grad u_c} + \magg{\frac{\p u_c}{\p t}} \right) \leq M,\quad\forall c\gg1.
}{mainconv1.1}

Due to the lack of smoothness of the solution $u_c$, we first mollify the indicator functions on the right hand side of \eqref{scalareqn-2} so that the approximate solution belongs to $H^1 (Q_T)$. To this end, we set $\e_0: = \frac{1}{2}\text{dist} (\p \O, \p B)$ and define for each $\varepsilon\in(0,\varepsilon_{0})$ the sets:
\eq{
B^\e = \left\{ x \in \overline{\O} : \text{dist} (x, B) < \e \right\}, \quad B_\e = \left\{ x \in B : \text{dist} (x,\p B) > \e\right\}.
}
Note that $B_\e \Subset B \Subset B^\e \Subset \O$. We regularize $\Ind_{\O \setminus B} (x)$ such that
\eq{
&\Ind_{\O \setminus B} ^\e = 1 \text{ in } \O \setminus B^\e, \quad\quad \Ind_{\O \setminus B} ^\e = 0 \text{ in } B, \nonumber \\
&0 \leq \Ind_{\O \setminus B}^\e \leq 1 \text{ in } B^\e \setminus B\quad\text{and} \lim_{\varepsilon\to0}\Ind_{\O \setminus B} ^\e = \Ind_{\O \setminus B}\text{ in } L^2(\O).
}
Similarly, we regularize $\Ind_B(x)$ such that
\eq{
&\Ind_B ^\e = 1 \text{ in } B_\e, \quad \Ind_B ^\e = 0 \text{ in } \O \setminus B, \nonumber \\
& 0 \leq \Ind_B ^\e \leq 1 \text{ in } B \setminus B_\e\quad\text{and}\quad\lim_{\varepsilon\to0}\Ind_B ^\e = \Ind_B \text{ in } L^2(\O).
}
Consider \eqref{scalareqn-2} with $\Ind_{\Omega\setminus B}$ and $\Ind_{B}$ replaced by $\Ind_{\Omega\setminus B}^{\varepsilon}$ and $\Ind_{B}^{\varepsilon}$, respectively, that is
\begin{equation}\label{scalareqn-2-approximate}
\begin{cases}
u_t = d \D u + \Ind_{\O\setminus B}^{\varepsilon} f(x,u) - c \Ind_{B}^{\varepsilon} u, & \text{in}\quad \O\times (0,\i), \\
\frac{\p u}{\p \nu } = 0, &\text{on}\quad \p \O \times (0,\i).
\end{cases}
\end{equation}
Denote by $u_c^\e$ the unique solution of \eqref{scalareqn-2-approximate}
satisfying the initial data $u_{c}^{\varepsilon}(\cdot,0)=u_{0}$. Note that the standard $L^2$-theory of parabolic equations ensures that 
\begin{equation}\label{limit-2022-04-11}
    \lim_{\varepsilon\to0}u_c^\e = u_c\quad\text{in}\quad W^{2,1}_2 (Q_T), 
\end{equation}
and the standard regularity theory ensures that $\frac{\p u^\e_c}{\p t} \in H^1 (Q_T)$. 

We establish some uniform-in-$\varepsilon$ estimates of $u_c^\e$. First, we differentiate\\ $\norm{u_c ^\e (\cdot,t)}_{L^2 (\O)} ^2$ with respect to time and integrate by parts to obtain:
\eq{
\frac{d}{dt} \norm{u_c ^\e(\cdot,t)}_{L^2 (\O)}^2 &= 2 \int_\O u_c ^\e \left(d \D u_c ^\e + \Ind_{G \setminus B} ^\e f(\cdot,u_c ^\e) - c \Ind_B ^\e \right) \\
&\leq 2 \int_\O \as{f(\cdot,u_c ^\e) u_c ^\e} \leq 2 \norm{f_u (\cdot,0)}_{L^\i (\O \setminus\overline{B})} \int_\O (u_c ^\e)^2 .
}
Gr{\"o}nwall's inequality implies that $\norm{u_c ^\e(\cdot,t)}_{L^2 (\O)}^2\leq e^{M_0 T} + \norm{u_0}_{L^2 (\O)}^2$, where $M_0:=2 \norm{f_u (\cdot,0)}_{L^\i (\O \setminus B)}$. Integrating with respect to time yields
\eql{
\norm{u_c ^\e}_{L^2 (Q_T)}^2 &\leq T (e^{M_0 T} + \norm{u_0}_{L^2 (\O)}^2).
}{eqEPSbound}

We now estimate higher order terms. Clearly, $u_{c}^{\varepsilon}$ satisfies
\eql{
\iint_{Q_T} \left( \frac{\p u_c^\e}{\p t} v + d \grad u_c^\e\cdot \grad v \right) = \iint_{Q_T} \left( \Ind_{\O \setminus B}^{\varepsilon} f(\cdot,u_c^\e) - c \Ind_B^{\varepsilon} u_c^\e \right) v ,\quad\forall v \in H^1 (Q_T).
}{weaksoln2}
Setting $v = u_c^\e$ in \eqref{weaksoln2} we have
\eq{
\int_0 ^T \frac{d}{dt} \norm{u_c^\e (\cdot,t)}_{L^2 (\O)}^2 + d \norm{\grad u_c^\e}_{L^2 (Q_T)}^2 &= \iint_{Q_{B,T}} f(\cdot,u_c^\e) u_c^\e - c \iint_{B \times (0,T)} (u_c^\e) ^2 \\
&\leq \norm{f_u (\cdot,0)}_{L^\i (\O\setminus\overline{B})} \norm{u_c^\e}_{L^2 (Q_T)}^2.
}
Note that $\sup_{\varepsilon\in(0,\varepsilon_{0})}\sup_{c\gg1}\norm{u_c^\e}_{L^2 (Q_T)}<\infty$ by \eqref{eqEPSbound}. Setting
$$
M_{1}=M_{1}(T):= \norm{f_u (\cdot,0)}_{L^\i (\O\setminus\overline{B})}\sup_{\varepsilon\in(0,\varepsilon_{0})}\sup_{c\gg1}\norm{u_c^\e}^{2}_{L^2 (Q_T)} + \norm{u_0}_{L^2(\O)}^2, 
$$
we find the uniform bounds
\begin{equation}\label{uniform-bound-1}
    \sup_{0 \leq t \leq T} \norm{u_c^\e(\cdot,t)}_{L^2 (\O)}^2 \leq M_{1}, \quad
\norm{\grad u_c^\e}_{L^2 (Q_T)}^2 \leq \frac{M_{1}}{d},\quad\forall c\gg1\,\,\text{and}\,\,\varepsilon\in(0,\varepsilon_{0}),
\end{equation}

We now obtain estimates on $\frac{\p u_c^\e}{\p t}$. Setting $v = \frac{\p u_c^\e}{\p t}$ in \eqref{weaksoln2}, valid due to the mollification procedure, we deduce
\eq{
\iint_{Q_T} \magg{\frac{\p u_c^\e}{\p t}} &= - \frac{d}{2} \iint_{Q_T} \frac{\p}{\p t} \magg{\grad u_c^\e}  + \iint_{Q_{T}} \Ind_{\O \setminus B}^\e f(\cdot,u_c^\e) \frac{\p u_c^\e}{\p t} - \frac{c}{2} \iint_{Q_T} \Ind_{B}^\e \frac{\p}{\p t}  (u_c^\e) ^2  \\
&\leq -\frac{d}{2} \left( \norm{\grad u_c^\e (\cdot, T)}^2_{L^2 (\O)} - \norm{\grad u_0}^2_{L^2 (\O)} \right) \nonumber \\
&\quad +\frac{1}{2} \iint_{Q_T} \magg{\frac{\p u_c^\e}{\p t}} + \frac{1}{2} \norm{f(\cdot,u_c^\e)}_{L^2 (Q_{B,T})}^2  \\
&\quad - \frac{c}{2} \int_{B} \Ind_B ^\e \left( (u_c^\e) ^2(\cdot,T) - u_0 ^2 \right)  \\
&\leq \frac{d}{2} \norm{\grad u_0}^2_{L^2 (\O)}  + \frac{1}{2} \iint_{Q_T} \magg{\frac{\p u_c^\e}{\p t}} + \frac{1}{2} \norm{f_u (\cdot,0)}^{2}_{L^\i (\O\setminus\overline{B})}\norm{u_c^\e}^{2}_{L^2 (Q_T)},
}
where we have applied Young's inequality, thrown away the negative terms, used the fact that $u_0 \equiv 0$ in $B$. Setting
$$
M_{2}=M_{2}(T):=d\norm{\grad u_0}^2_{L^2 (\O)}  +\norm{f_u (\cdot,0)}^{2}_{L^\i (\O\setminus\overline{B})}\sup_{\varepsilon\in(0,\varepsilon_{0})}\sup_{c\gg1}\norm{u_c^\e}^{2}_{L^2 (Q_T)},
$$
we find
\begin{equation}\label{uniform-bound-2}
\iint_{Q_T} \magg{\frac{\p u_c^\e}{\p t}} \leq M_{2},\quad\forall c\gg1\,\,\text{and}\,\,\varepsilon\in(0,\varepsilon_{0}).
\end{equation}
Passing to the limit $\e \to 0$ in \eqref{uniform-bound-1} and \eqref{uniform-bound-2}, we conclude \eqref{mainconv1.1} from \eqref{limit-2022-04-11}.


\paragraph{\bf Step 2}

By \eqref{mainconv1.1}, there is a subsequence, still denoted by $u_c$, and a function $U \in H^1 (Q_{T})$ such that 
\begin{equation}\label{convergence-2022-04-05}
    \begin{split}
        &\lim_{c\to\infty}u_c = U\quad\text{in}\quad L^2 (Q_T),\\
        &\lim_{c\to\infty}\frac{\p u_c}{\p t} = \frac{\p U}{\p t}\quad\text{weakly in}\quad L^2 (Q_T),\\
        &\lim_{c\to\infty}\grad u_c = \grad U\quad\text{weakly in}\quad L^2 (Q_T).
    \end{split}
\end{equation}
Note that in light of Lemma \ref{monotonicallydecreasing}, $U$ must be the pointwise and monotone limit of $u_{c}$ as $c\to\infty$. We show $U=0$ a.e. in $B\times(0,T)$ so that $U\in H_B ^1 (Q_T)$.

Recall that $\phi_{1,c}$ is a positive eigenfunction of \eqref{MainEig2-2} associated with the principal eigenvalue $\mu_{1,c}$. The normalization $\|\phi_{1,c}\|_{L^{2}(\Omega)}=1$ is fixed. Set $w_c: = M e^{-\mu_{1,c} t} \phi_{1,c}$ for some $M>0$ to be determined. Direct computations yield
\eq{
(w_c)_t - d \D w_c = (\Ind_{\O \setminus B} f_u (\cdot,0) - c \Ind_{B}) w_c \geq \Ind_{\O \setminus B}f(\cdot,w_c) - c \Ind_{B} w_c ,
}
where we used in the inequality the fact that $f(\cdot,u) \leq f^\prime (\cdot,0) u$ for any $u\geq0$ due to Assumption \ref{assumptionf} (3). Obviously, $\frac{\partial w_{c}}{\partial\nu }=0$ on $\partial\Omega$.

Theorem \ref{convergence-thm-2} (2) says $\lim_{c \to \i}\phi_{1,c} = \phi_{1,\infty}$ in $H^1 (\O)$, where $\phi_{1,\infty}$ is the positive eigenfunction of \eqref{MainEig1-2} associated with the principal eigenvalue $\mu_{1,\infty}$ and satisfies the normalization $\|\phi_{1,\infty}\|_{L^{2}(\O\setminus \overline{B})} = 1$. This together with $\frac{\partial\phi_{1,\infty}}{\partial \nu }<0$ on $\partial B$ and the conditions on $u_{0}$ ensures the existence of $M\gg1$ such that $u_{0} \leq M \phi_{1,c}$ for all $c\gg1$ by Lemma \ref{eig-lowerbound}. For such a $M$, we apply the comparison principle to conclude that $u_c \leq w_c$ in $\overline{\Omega}\times[0,\infty)$ for all $c\gg1$. This together with Theorem \ref{convergence-thm-2} and the fact $\phi_{1,\infty}\in H_{B}^{1}(\Omega)$ yields
\eq{
\limsup_{c \to \i} \int_0 ^T  \int_B u_c^2 \leq \limsup_{c \to \i}M^2 \int_0 ^T e^{-2 \mu_{1,c} t}dt \int_B \phi_{1,c}^2 = 0.
}
We then conclude from the monotone convergence theorem or the convergence in \eqref{convergence-2022-04-05} that $U=0$ a.e. in $B \times (0,T)$, and hence, $U\in H_B ^1 (Q_T)$. 


\paragraph{\bf Step 3}

We show $U=u_{\infty}$ on $\overline{\Omega}\times[0,T]$. Note that $u_{c}$ satisfies
\eql{
\iint_{Q_T} \left( \frac{\p u_c^\e}{\p t} v + d \grad u_c^\e\cdot \grad v \right) = \iint_{Q_T} \left( \Ind_{\O \setminus B}^{\varepsilon} f(\cdot,u_c^\e) - c \Ind_B^{\varepsilon} u_c^\e \right) v ,\quad\forall v \in H^1 (Q_T).
}{weaksoln2-u-c}
For $v \in C^1 ([0,T]; H_B ^1 (\O))$ with $v(T) = 0$, we see from \eqref{weaksoln2-u-c} that
\eql{
- \iint_{Q_{B,T}} \frac{\p v}{\p t} u_c + \iint_{Q_{B,T}} \grad u_c\cdot \grad v = \iint_{Q_{B,T}} f(\cdot,u_c) v + \int_{\O \setminus B} u_0 v(\cdot,0) .
}{init1.1} 

Note from \eqref{weaksoln2-u-c} and \eqref{convergence-2022-04-05} that $U$ satisfies
\eql{
\iint_{Q_{B,T}} \left( \frac{\p U}{\p t} v + d \grad U\cdot \grad v \right) = \iint_{Q_{B,T}} f(\cdot,U) v,\quad\forall v \in H_B^1 (Q_T).
}{weaksoln1}
In particular, for $v \in C^1 ([0,T]; H_B ^1 (\O))$ with $v(T) = 0$,
\eql{
- \iint_{Q_{B,T}} \frac{\p v}{\p t} U + \iint_{Q_{B,T}} \grad U\cdot \grad v = \iint_{Q_{B,T}} f(\cdot,u_\i) v + \int_{\O \setminus B} U (\cdot,0) v(\cdot,0) .
}{init1.2}
Comparing \eqref{init1.1} and \eqref{init1.2} and taking $c \to \i$, we find that indeed $U (\cdot,0) = u_0$ by the arbitrariness of $v(\cdot,0)$.

Consequently, we have shown that $U$ satisfies \eqref{weaksoln1} and $U (\cdot,0) = u_0$. This is actually a weak formulation of \eqref{scalareqn-1}. Moreover, as the pointwise and monotone limit of $u_{c}$ as $c\to\infty$, $U$ must be bounded. We show that the weak formulation admits at most one bounded solution, and then, $U = u_\i$ on $\overline{\Omega}\times[0,T]$.

To this end, we make note of the following fact (see e.g. \cite[Lemma 3.1.2]{Wu2006}): given a function $w \in H_B ^1 (Q_T)$ such that $w(\cdot,0) = 0$, there holds
\eql{
\int_{\O \setminus \overline{B}} w^2 (\cdot, t) = 2 \int_0 ^t \int_{\O \setminus \overline{B}} w \frac{\p u}{\p t} \quad \text{a.e.}\quad t \in (0,T).
}{weakcalc}

Suppose now that there are two bounded solutions $u_1,u_2\in H_B^1 (Q_T)$ satisfying the weak formulation \eqref{weaksoln1} and the same initial data belonging to $H_B^1 (\O) \cap C^{+}(\overline{\O})$, which is assumed to hold in the trace sense. Set $w := u_1 - u_2$ and note that $w(\cdot,0) = 0$ in $\overline{\O} \setminus B$. Then, $w$ satisfies
\eq{
\iint_{Q_{B,T}} \left( \frac{\p w}{\p t} v + d \grad w \cdot \grad v \right)  = \iint_{Q_{B,T}} ( f(\cdot,u_1) - f(\cdot, u_2) ) v,\quad\forall v \in H_B^1 (Q_T).
}
Take $v = w^+ \in H_B ^1 (Q_T)$ and apply \eqref{weakcalc} with the Lipschitz continuity of $f(x,\cdot)$ to obtain
\eq{
\frac{1}{2} \int_{\O \setminus\overline{B}} ( w^+)^2 (\cdot,t) &\leq M \iint_{Q_{B,T}} (w^+) ^2.
}
Gr{\"o}nwall's inequality implies that $\int_{\O \setminus B} (w^+) ^2 = 0 $ for a.e. $t \in (0,T)$. Repeating the procedure for $v = w^-$, we conclude that $w = 0$ a.e. and the uniqueness follows. 

\paragraph{\bf Step 4} As $U$ is the monotone limit of $u_c $ as $c\to\infty$ and $U = u_\i$ is continuous in $\overline{\O}\times[0,T]$ when extended by zero in $B$, we conclude from Dini's theorem that $u_c \to u$ uniformly in $\overline{\O} \times [0,T]$ as $c\to\infty$.
\end{proof}

\section{Eigenvalue problems with sign-indefinite weight}

\ Here we collect some classical results about eigenvalue problems. We are mainly interested in their \emph{principal eigenvalues}, namely, eigenvalues admitting positive eigenfunctions.

We consider the following eigenvalue problem with sign indefinite weight (see e.g. \cite{Brown1980,Ni2001,CantrellCosner2003}):
\begin{equation}\label{MainEig2-app}
\begin{cases}
\D \psi + \l h \psi = 0, & \text{in} \quad \O ,  \\
\frac{\p \psi}{\p \nu } = 0 , & \text{on}\quad \p \O,
\end{cases}
\end{equation}
We point out that \eqref{MainEig2-app} could admit multiple principal eigenvalues, and $0$ is always a principal eigenvalue of \eqref{MainEig2-app}. 

The following result is now standard. Further discussion can be found in \cite{Ni2001,CantrellCosner2003}, for example, with the main result originally obtained in \cite[Theorem 3.13]{Brown1980}. 

\begin{prop}\label{MainEig2-thm-app}
Suppose $h\in L^{\infty}(\O)$ is sign-changing. The following hold.
\begin{enumerate}[\rm(i)]
\item If $\int_\O h \geq 0$, then $0$ is the only non-negative principal eigenvalue of \eqref{MainEig2-app}.

\item If $\int_\O h < 0$, then \eqref{MainEig2-app} admits a unique nonzero principal eigenvalue $\lambda_{1}(h)$, which is simple and given by
\eq{
\l_1 (h) = \inf_{\psi\in H^1 (\O) } \left\{ \frac{\int_\O \magg{\grad \psi}}{\int_\O h \psi^2} :  \int_\O h \psi^2 > 0 \right\}.
}
Moreover, if $h_{1},h_{2}\in L^{\infty}(\Omega)$ are sign-changing and satisfy $h_1 \leq h_2$, then $\l_1 (h_1) \geq \l_1 (h_2)$ with strict inequality whenever $h_1 \not\equiv h_2$.
\end{enumerate}
\end{prop}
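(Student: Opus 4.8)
The plan is to dispatch (i) with a short substitution argument and to handle (ii) --- existence, simplicity, uniqueness, and monotonicity of the nonzero principal eigenvalue --- by the direct method applied to the associated Rayleigh quotient, together with Picone's identity. Throughout I will use three standing facts: $0$ is always a principal eigenvalue of \eqref{MainEig2-app} (realized by constant eigenfunctions); any principal eigenfunction $\psi$ belongs to $W^{2,p}(\O)$ for every $p\in[1,\i)$ because $h\in L^{\i}(\O)$; and $\psi>0$ on $\overline{\O}$ by the strong maximum principle and Hopf's lemma, so in particular $\inf_{\overline{\O}}\psi>0$ and $1/\psi\in H^{1}(\O)$.

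For (i), suppose $\l>0$ were a principal eigenvalue with positive eigenfunction $\psi$. Testing the weak form of \eqref{MainEig2-app} against $1/\psi$ gives the identity $-\int_{\O}\magg{\grad\psi}/\psi^{2}=\l\int_{\O}h$. The left-hand side is $\le0$, so $\int_{\O}h\le0$; and if $\int_{\O}h=0$ then $\grad\psi\equiv0$, forcing $\psi$ constant and hence $h\equiv0$, which contradicts that $h$ is sign-changing. Therefore $\int_{\O}h\ge0$ excludes any positive principal eigenvalue, which is exactly (i). Read in the regime $\int_{\O}h<0$, the same identity also shows that every nonzero principal eigenvalue there must be positive (since $\l\int_\O h<0$ is impossible for $\l<0$).

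For (ii) assume $\int_{\O}h<0$ and apply the direct method to $\l_{1}(h):=\inf\bigl\{\int_{\O}\magg{\grad\psi}:\psi\in H^{1}(\O),\ \int_{\O}h\psi^{2}=1\bigr\}$, whose admissible set is nonempty since $h$ is sign-changing and which, by homogeneity, equals the ratio in the statement. The delicate step is coercivity: writing $\psi=a+\phi$ with $a=\frac{1}{\as{\O}}\int_{\O}\psi$ and $\int_{\O}\phi=0$, the constraint becomes $1=a^{2}\int_{\O}h+2a\int_{\O}h\phi+\int_{\O}h\phi^{2}$, and since $\int_{\O}h<0$ the Poincar\'e inequality $\norm{\phi}_{L^{2}(\O)}^{2}\le C\int_{\O}\magg{\grad\phi}=C\int_{\O}\magg{\grad\psi}$ combined with Young's inequality produces a bound of the form $1+c_{0}a^{2}\le C'\int_{\O}\magg{\grad\psi}$ with $c_{0}>0$. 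This simultaneously yields $\l_{1}(h)>0$ and shows every minimizing sequence is bounded in $H^{1}(\O)$; the compact embedding $H^{1}(\O)\hookrightarrow L^{2}(\O)$ and weak lower semicontinuity then produce a minimizer $\psi_{1}$, which may be taken nonnegative (replace by $\as{\psi_{1}}$), satisfies \eqref{MainEig2-app} with $\l=\l_{1}(h)$ by the Lagrange multiplier rule, and is positive by the strong maximum principle; hence $\l_{1}(h)>0$ is a nonzero principal eigenvalue. For uniqueness, if $\l,\mu>0$ are both nonzero principal eigenvalues with positive eigenfunctions $\psi_{\l},\psi_{\mu}$, integrating Picone's identity and substituting the two eigen-equations (the test function $\psi_{\l}^{2}/\psi_{\mu}\in H^{1}(\O)$ is admissible because $\inf\psi_{\mu}>0$) yields $(\l-\mu)\int_{\O}h\psi_{\l}^{2}\ge0$; since $\int_{\O}h\psi_{\l}^{2}=\l^{-1}\int_{\O}\magg{\grad\psi_{\l}}>0$, this gives $\l\ge\mu$, and symmetry gives $\l=\mu$. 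Simplicity follows by noting that any $\l_{1}(h)$-eigenfunction $\psi$ has $\as{\psi}$ attaining the minimal Rayleigh quotient, hence $\as{\psi}$ solves \eqref{MainEig2-app} and is positive, so $\psi$ has constant sign; then for two positive eigenfunctions $\psi_{1},\psi_{2}$ the eigenfunction $\psi_{1}-c\psi_{2}$ with $c=\psi_{1}(x_{0})/\psi_{2}(x_{0})$ vanishes at an interior point yet cannot change sign, so is identically zero by the strong maximum principle. Finally, monotonicity is immediate from the ratio formula: if $h_{1}\le h_{2}$ (both with negative integral, so that $\l_{1}(h_{i})$ are defined) then $\int_{\O}h_{2}\psi^{2}\ge\int_{\O}h_{1}\psi^{2}>0$ for every $\psi$ admissible for $h_{1}$, whence $\l_{1}(h_{2})\le\l_{1}(h_{1})$; evaluating at the positive minimizer for $h_{1}$ makes this strict whenever $h_{1}\not\equiv h_{2}$.

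I expect the main obstacle to be the coercivity estimate in (ii): a bound on $\int_{\O}\magg{\grad\psi}$ together with the quadratic constraint does not control $\norm{\psi}_{L^{2}(\O)}$ in general, and it is precisely the sign condition $\int_{\O}h<0$ that --- through the mean/mean-zero splitting, Poincar\'e, and Young --- simultaneously forces $\l_{1}(h)>0$ and supplies the compactness the direct method needs; for $\int_{\O}h\ge0$ the constant functions leak into the minimization and no positive principal eigenvalue survives. Once this is in place, the remaining ingredients (the $1/\psi$ test-function identity, Picone's identity, and the strong maximum principle) are routine.
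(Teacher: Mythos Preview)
The paper does not actually prove this proposition: it is stated as a standard result and attributed to \cite[Theorem 3.13]{Brown1980}, with further discussion deferred to \cite{Ni2001,CantrellCosner2003}. Your proposal therefore goes well beyond the paper by supplying a complete, self-contained argument.

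Your approach is correct. The $1/\psi$ test-function identity cleanly disposes of (i) and simultaneously rules out negative principal eigenvalues in the regime $\int_{\O}h<0$ (your parenthetical ``since $\l\int_{\O}h<0$ is impossible for $\l<0$'' is slightly garbled --- what you mean is that $\l\int_{\O}h>0$ would contradict the identity --- but the logic is sound). For (ii), the mean/mean-zero splitting combined with Poincar\'e and Young is exactly the right mechanism to extract coercivity from the sign condition $\int_{\O}h<0$; this is the substantive step and you have identified it correctly. The remaining pieces (Lagrange multiplier, strong maximum principle to upgrade $|\psi_{1}|\ge0$ to $>0$, Picone for uniqueness, and the Rayleigh-quotient comparison for monotonicity) are routine and correctly assembled. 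One small point worth making explicit in the simplicity argument: any real eigenfunction $\psi$ for $\l_{1}(h)>0$ automatically satisfies $\int_{\O}h\psi^{2}>0$ (else $\int_{\O}|\nabla\psi|^{2}\le0$ forces $\psi$ constant, a contradiction), so $|\psi|$ is indeed admissible in the variational characterization.

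Compared to the cited literature, your route is essentially the classical one (direct method plus Picone), repackaged efficiently; what it buys over the paper's treatment is simply that the reader does not have to chase references.
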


This problem, paired with problem \eqref{MainEig2-2-app}, have been applied to a wide variety of biological phenomena, including scalar parabolic problems \cite{Cantrell1991}, the selection of phenotypes based on differing rates of diffusion \cite{Hutson2002}, competition systems in heterogeneous environments with identical resources \cite{He2015} or with equal total resources \cite{He2016,He2017}, and even temporally varying heterogeneous environments, see \cite{Hutson2001} and more recently \cite{Bai2020}.

We formulate the associated \textit{destruction} eigenvalue problem as follows:
\begin{equation}\label{MainEig1-1-app}
\begin{cases}
\D \psi +  \l m  \psi = 0,& \text{in}\quad \O \setminus \overline{B},  \\
\frac{\p \psi}{\p \nu } = 0,& \text{on}\quad\p \O , \\
\psi = 0,& \text{on}\quad \p B , 
\end{cases}
\end{equation}
where $m \in L^\i (\O \setminus{B})$. Note that $0$ is NOT a principal eigenvalue of \eqref{MainEig1-1-app} due to the zero Dirichlet boundary condition on $\partial B$, which actually causes some essential differences between \eqref{MainEig2-app} and \eqref{MainEig1-1-app}.

\begin{prop}\label{existthm-app}
Suppose $m \in L^\i (\O \setminus \overline{B})$ is positive on a set of positive Lebesgue measure. Then, \eqref{MainEig1-1-app} admits a unique positive principal eigenvalue $\l_1 (m,B)$, which is simple and given by
\eq{
\l_1 (m,B) = \inf_{\psi\in H_B ^1 (\O)} \left\{ \frac{\int_{\O\setminus\overline{B}} \magg{\grad \psi}}{\int_{\O\setminus\overline{B}} m \psi^2} : \int_{\O\setminus\overline{B}} m \psi^2 > 0 \right\}.
}
Moreover, $\l_1 (m,B)$ is monotone in the following sense:
\begin{enumerate}[\rm(i)]
\item for any $B_1, B_2$ such that $B_1 \subset B_2$, $\l_1 (m, B_1) \leq \l_1 (m,B_2)$ with strict inequality whenever $B_2 \setminus \overline{B_1}$ has positive measure;
\item for any $m_1,m_2 \in L^\i (\O \setminus B)$ satisfying $m_1 \leq m_2$, $\l_1 (m_1, B) \geq \l_1 (m_2,B)$ with strict inequality whenever $m_1 \not\equiv m_2$.
\end{enumerate}
\end{prop}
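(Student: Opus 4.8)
The plan is to run the direct method of the calculus of variations on the Rayleigh quotient in the statement, using the homogeneous Dirichlet condition on $\partial B$ to make the Dirichlet form coercive. \textbf{Setup.} By Assumption \ref{assumptionA}, $\Omega\setminus\overline{B}$ is a bounded smooth domain and every $\psi\in H_{B}^{1}(\Omega)$ obeys a Poincaré inequality $\|\psi\|_{L^{2}(\Omega\setminus\overline{B})}\le C_{P}\|\nabla\psi\|_{L^{2}(\Omega\setminus\overline{B})}$ thanks to its vanishing trace on $\partial B$; hence $\|\nabla\cdot\|_{L^{2}(\Omega\setminus\overline{B})}$ is an equivalent norm on $H_{B}^{1}(\Omega)$, and $H_{B}^{1}(\Omega)\hookrightarrow L^{2}(\Omega\setminus\overline{B})$ compactly by Rellich--Kondrachov. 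Since $m>1/k$ on a set of positive measure for some $k$ (a subset of $\Omega\setminus\overline{B}$), a mollified indicator of a small ball around a density-one point of that set is an admissible test function, so the infimum $\lambda_{1}(m,B)$ is finite; and $\int_{\Omega\setminus\overline{B}}m\psi^{2}\le\|m\|_{L^{\infty}}\|\psi\|_{L^{2}}^{2}\le C_{P}\|m\|_{L^{\infty}}\|\nabla\psi\|_{L^{2}}^{2}$ forces $\lambda_{1}(m,B)\ge(C_{P}\|m\|_{L^{\infty}})^{-1}>0$.

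\textbf{Existence of the eigenpair.} Along a minimizing sequence normalized by $\int m\psi_{n}^{2}=1$ the Dirichlet energies are bounded, so a subsequence converges weakly in $H_{B}^{1}(\Omega)$ and strongly in $L^{2}(\Omega\setminus\overline{B})$ to some $\psi$; the constraint passes to the limit because $m\in L^{\infty}$, and weak lower semicontinuity of the Dirichlet energy shows $\psi$ is a minimizer. Its Euler--Lagrange equation is exactly the weak form of \eqref{MainEig1-1-app} with $\lambda=\lambda_{1}(m,B)$, and elliptic regularity up to $\partial\Omega$ and $\partial B$ promotes $\psi$ to a strong/classical solution. Since $|\nabla|\psi||=|\nabla\psi|$ a.e. and the $m$-constraint is unchanged under $\psi\mapsto|\psi|$, the function $|\psi|$ is also a minimizer, hence a nonnegative eigenfunction; the strong maximum principle (with $\Omega\setminus\overline{B}$ connected, by Assumption \ref{assumptionA}) then gives $|\psi|>0$ in $\Omega\setminus\overline{B}$. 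Write $\psi_{1}$ for this positive eigenfunction. Thus $\lambda_{1}(m,B)>0$ is a positive principal eigenvalue realized by the variational formula.

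\textbf{Simplicity and uniqueness.} These I would extract from a Picone-type identity: for a strictly positive classical eigenfunction $\psi_{\lambda}$ of any eigenvalue $\lambda>0$ and any $v\in C_{B}^{1}(\Omega)$,
\[
\int_{\Omega\setminus\overline{B}}|\nabla v|^{2}-\lambda\int_{\Omega\setminus\overline{B}}mv^{2}=\int_{\Omega\setminus\overline{B}}\Bigl|\nabla v-\tfrac{v}{\psi_{\lambda}}\nabla\psi_{\lambda}\Bigr|^{2}\ge0,
\]
where the boundary term on $\partial\Omega$ vanishes by the Neumann condition and the one on $\partial B$ vanishes because $v^{2}/\psi_{\lambda}\to0$ there (both $v$ and $\psi_{\lambda}$ vanish linearly, with $\partial_{\nu}\psi_{\lambda}<0$ by Hopf's lemma, so $v^{2}/\psi_{\lambda}\in H_{B}^{1}$). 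Minimizing over admissible $v$ gives $\lambda\le\lambda_{1}(m,B)$, while $\psi_{\lambda}$ is itself admissible with Rayleigh quotient $\lambda$, so $\lambda\ge\lambda_{1}(m,B)$; hence $\lambda=\lambda_{1}(m,B)$, which is the asserted uniqueness. Taking $v$ to be any eigenfunction of $\lambda_{1}(m,B)$ with $\int mv^{2}>0$ forces the right-hand side (with $\psi_{\lambda}=\psi_{1}$) to vanish, so $\nabla(v/\psi_{1})\equiv0$ and $v$ is a scalar multiple of $\psi_{1}$; an eigenfunction with $\int mv^{2}\le0$ would satisfy $\int|\nabla v|^{2}=\lambda_{1}(m,B)\int mv^{2}\le0$, forcing $v$ constant and then $v\equiv0$ by the Dirichlet condition. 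This proves simplicity.

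\textbf{Monotonicity.} For (ii), if $m_{1}\le m_{2}$ then every $v$ admissible for $m_{1}$ is admissible for $m_{2}$ with smaller-or-equal Rayleigh quotient, whence $\lambda_{1}(m_{2},B)\le\lambda_{1}(m_{1},B)$; equality would force $\int_{\Omega\setminus\overline{B}}(m_{2}-m_{1})\psi_{m_{1}}^{2}=0$ with $\psi_{m_{1}}>0$, impossible unless $m_{1}\equiv m_{2}$. For (i), $B_{1}\subset B_{2}$ gives $H_{B_{2}}^{1}(\Omega)\subset H_{B_{1}}^{1}(\Omega)$, and the two Rayleigh quotients agree on $H_{B_{2}}^{1}(\Omega)$ (such functions, extended by zero, vanish on all of $B_{2}$), so the infimum over the smaller class is no smaller: $\lambda_{1}(m,B_{1})\le\lambda_{1}(m,B_{2})$; equality would make the zero-extended $\psi_{B_{2}}$ a nonnegative, nontrivial eigenfunction for $\lambda_{1}(m,B_{1})$ on $\Omega\setminus\overline{B_{1}}$ vanishing on the nonempty open set $B_{2}\setminus\overline{B_{1}}$, contradicting the strong maximum principle / unique continuation together with Assumption \ref{assumptionA}. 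I expect the Picone step to be the main obstacle, since the sign-indefiniteness of $m$ rules out the usual Krein--Rutman/positive-resolvent argument and the identity has to be pushed up to $\partial B$, where the eigenfunction vanishes but with a nonzero normal derivative.
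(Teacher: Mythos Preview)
Your argument is correct and essentially self-contained; the paper, by contrast, does not prove this proposition at all but defers to the abstract variational framework in \cite[Chapter 3]{Weinberger1974} and the Dirichlet analogue in \cite{Brown1980}. What you have written is effectively an unpacking of those references adapted to the mixed Neumann/Dirichlet boundary conditions: the direct method on the Rayleigh quotient, positivity via $\psi\mapsto|\psi|$ and the strong maximum principle, and the Picone identity for simplicity and uniqueness of the positive principal eigenvalue. This is precisely the route those sources take, so your proof and the paper's citation point to the same mathematics; you simply supply the details.

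Two small remarks. First, the Poincar\'e bound should carry a $C_P^2$ rather than $C_P$, so the explicit lower bound on $\lambda_1$ is $(C_P^2\|m\|_{L^\infty})^{-1}$; this is cosmetic. Second, your positivity and strict-monotonicity arguments in (i) invoke the strong maximum principle on $\Omega\setminus\overline{B_1}$, which requires that domain to be connected. The paper notes (just after stating Assumption~\ref{assumptionA}) that this holds for $N\ge2$; in one space dimension $\Omega\setminus\overline{B}$ is typically disconnected and the principal eigenfunction may vanish on entire components, so simplicity and the strict inequality in (i) need a separate (easy) argument component by component. The paper glosses over this edge case as well by deferring to references, so your treatment is no less complete than the paper's.
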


Proposition \ref{existthm-app} follows from the more general abstract framework found in \cite[Chapter 3]{Weinberger1974}. See also \cite{Brown1980}, which covers in detail a Dirichlet case on the outer boundary $\p \O$, which is most similar to our problem here.


\section{Eigenvalue problems associated with linearization}\label{appendix-1}
Consider
\begin{equation}\label{MainEig2-2-app}
\begin{cases}
d \D \phi +  h \phi + \mu \phi = 0 ,& \text{in}\quad \O ,  \\
\frac{\p \phi}{\p \nu } = 0 , &\text{on}\quad \p \O,
\end{cases}
\end{equation}
where $h \in L^\i (\O)$. The following result is well-known (see e.g. \cite{Ni2001, CantrellCosner2003}).

\begin{prop}\label{deglineig-app}
The problem \eqref{MainEig2-2-app} admits a unique principal eigenvalue $\mu_1 (d,h)$, which is simple and given by
$$
\mu_1 (d,h) = \inf_{\phi \in H ^1 (\O)} \left\{ \int_\O \left( d \magg{\grad \phi} - h \phi^2 \right)  \ : \ \int_\O \phi^2  = 1  \right\}.
$$
Moreover, $\mu_1 (d,h)$ enjoys the following properties:
\begin{enumerate}[\rm(i)]
\item $d\mapsto\mu_1(d,h)$ is strictly increasing on $(0,\infty)$; 
\item $\mu_1 (d,h_2) < \mu_1 (d,h_1)$ if $ h_2 \gneqq h_1$;
\item $\int_\O h < 0 \Rightarrow$
$\begin{cases}
\mu_1 (d,h) < 0 , &\text{if}\quad d < \frac{1}{\l_1 (h)}, \\
\mu_1 (d,h) = 0 , &\text{if}\quad d = \frac{1}{\l_1 (h)}, \\
\mu_1 (d,h) > 0 , &\text{if}\quad d > \frac{1}{\l_1 (h)} .
\end{cases}$
\item $\int_\O h \geq 0 \Rightarrow \mu_1 (d,h) < 0$ for all $d>0$.
\end{enumerate}
\end{prop}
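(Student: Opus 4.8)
The plan is to first establish the variational formula for $\mu_1(d,h)$ together with existence, simplicity and strict positivity of its eigenfunction, and then to read off (i)--(iv) as comparisons within that formula, invoking Proposition \ref{MainEig2-thm-app} for the link with the sign-indefinite weight problem \eqref{MainEig2-app}. For the core statement I would argue by the direct method: since $h\in L^{\infty}(\O)$, the Rayleigh functional $J(\phi):=\int_\O\bigl(d\magg{\grad\phi}-h\phi^{2}\bigr)$ is bounded below on the $L^{2}$-unit sphere of $H^{1}(\O)$, a minimizing sequence is bounded in $H^1(\O)$, and the compact embedding $H^1(\O)\hookrightarrow L^2(\O)$ together with the weak lower semicontinuity of $\phi\mapsto\int_\O\magg{\grad\phi}$ produces a minimizer. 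Replacing it by its absolute value, which leaves $J$ unchanged, gives a nonnegative minimizer $\phi_1$; its Euler--Lagrange equation exhibits $\phi_1$ as a nonnegative (hence, by elliptic regularity, classical) solution of \eqref{MainEig2-2-app} with $\mu=\mu_1(d,h)$, and the strong maximum principle promotes $\phi_1\geq0$ to $\phi_1>0$ in $\overline\O$. Simplicity, and the fact that no other eigenvalue admits a positive eigenfunction, follow from the standard comparison arguments (integration by parts, using the Neumann conditions, forces the eigenvalues of two positive eigenfunctions to coincide, and a Harnack/sliding argument forces proportionality); equivalently one may apply the Krein--Rutman theorem to the compact, strongly positive resolvent $(-d\Delta-h+K)^{-1}$ on $C(\overline\O)$ for $K\gg1$.

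Properties (i) and (ii) are then comparisons of the formula against a fixed test function. For (ii), if $h_2\gneqq h_1$ then $J$ with weight $h_2$ is pointwise no larger than $J$ with weight $h_1$ on every admissible $\phi$, whence $\mu_1(d,h_2)\leq\mu_1(d,h_1)$; evaluating at the strictly positive minimizer $\phi_2$ for $h_2$ turns $\int_\O h_2\phi_2^{2}>\int_\O h_1\phi_2^{2}$ into a strict gain, so $\mu_1(d,h_2)<\mu_1(d,h_1)$. For (i), fixing $d_1<d_2$ and testing the $d_1$-functional with the minimizer $\phi_2$ for $d_2$ gives $\mu_1(d_1,h)\leq\int_\O\bigl(d_1\magg{\grad\phi_2}-h\phi_2^{2}\bigr)<\int_\O\bigl(d_2\magg{\grad\phi_2}-h\phi_2^{2}\bigr)=\mu_1(d_2,h)$, where the strict inequality uses $\grad\phi_2\not\equiv0$; this fails only if $\phi_2$ is a nonzero constant, in which case the eigenvalue equation forces $h$ to be constant, a degenerate situation that does not arise in any application of the proposition.

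For (iii) and (iv) I would use that $\mu_1(d,h)=0$ holds exactly when the positive $\phi_1$ solves $d\Delta\phi_1+h\phi_1=0$, i.e. when $1/d$ is a principal eigenvalue of \eqref{MainEig2-app}. When $\int_\O h<0$, Proposition \ref{MainEig2-thm-app}(ii) identifies $\lambda_1(h)>0$ as the unique nonzero principal eigenvalue of \eqref{MainEig2-app}, so $\mu_1(d,h)=0$ precisely at $d=1/\lambda_1(h)$; combined with the strict monotonicity in $d$ from (i) this yields the trichotomy in (iii). For (iv), testing $J$ with the constant $\phi\equiv\as{\O}^{-1/2}$ gives $\mu_1(d,h)\leq-\as{\O}^{-1}\int_\O h\leq0$; if $\int_\O h>0$ this is already strict, and if $\int_\O h=0$ then $\mu_1(d,h)\le0$ and, were it equal to $0$, the constant would be a minimizer of $J$ and hence a positive eigenfunction for the eigenvalue $0$, forcing $h\equiv0$ — excluded in the applications (for instance $h=f_u(\cdot,0)$ is positive somewhere by Assumption \ref{assumptionf}) — so in fact $\mu_1(d,h)<0$ for all $d>0$.

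I expect the first step to be the main obstacle: making the existence, simplicity and strict positivity of the principal eigenpair fully rigorous — the minimizer-plus-strong-maximum-principle argument, or equivalently Krein--Rutman — in the Neumann setting with a merely bounded, possibly sign-changing, weight. Once this is in hand, (i)--(iv) are short manipulations of the variational characterization, the only care needed being the degenerate constant/trivial-weight cases flagged above, which never occur in the models considered here.
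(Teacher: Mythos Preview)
The paper does not prove this proposition at all: it is stated in the Supplementary Materials with the remark ``The following result is well-known (see e.g.\ \cite{Ni2001, CantrellCosner2003})'' and no argument is given. Your proposal supplies the standard proof one would find in those references --- direct method for the variational formula, strong maximum principle for positivity, Krein--Rutman or an integration-by-parts comparison for simplicity, and then monotonicity comparisons for (i)--(iv) --- and it is correct. You also correctly flag the two degenerate cases ($h$ constant for the strictness in (i), and $h\equiv 0$ for the strictness in (iv) when $\int_\Omega h=0$) that the proposition as stated does not exclude; these never arise in the paper's applications, where $h=m_c$ or $h=\Ind_{\Omega\setminus B}f_u(\cdot,0)-c\Ind_B$ is genuinely sign-changing.
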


Similarly, we formulate the associated destruction eigenvalue problem as
\begin{equation}\label{MainEig1-2-app}
\begin{cases}
d \D \phi +  m \phi + \mu \phi = 0 ,& \text{in}\quad \O \setminus \overline{B},  \\
\frac{\p \phi}{\p \nu } = 0 ,& \text{on}\quad \p \O , \\
\phi = 0 ,&  \text{on}\quad \p B, 
\end{cases}
\end{equation}
where $m \in L^\i (\O \setminus \overline{B})$. 

\begin{prop}\label{deglineig-1-app}
The problem \eqref{MainEig1-2-app} admits a unique principal eigenvalue $\mu_1 (d,m,B)$, which is simple and given by
$$
\mu_1 (d,m,B) = \inf_{\phi \in H_{B} ^1 (\O)} \left\{ \int_{\O\setminus\overline{B}} \left( d \magg{\grad \phi} - m \phi^2 \right)  \ : \ \int_{\O\setminus \overline{B}} \phi^2  = 1  \right\}.
$$
Moreover, $\mu_1 (d,m,B)$ enjoys the following properties:
\begin{enumerate}[\rm(i)]
\item $d\mapsto\mu_1 (d,m,B)$ is strictly increasing on $(0,\infty)$; 
\item $\mu_1 (d,m_2,B) < \mu_1 (d,m_1,B)$ if $ m_2 \gneqq m_1$;
\item $m \leq 0 \Rightarrow \mu_1 (d,m,B) > 0$ for all $d > 0$;
\item $m > 0$ on some nontrivial subset $\Rightarrow$
$\begin{cases}
\mu_1 (d,m,B) < 0, &\text{if}\quad d < \frac{1}{\l_1 (m,B)}, \\
\mu_1 (d,m,B) = 0, &\text{if}\quad d = \frac{1}{\l_1 (m,B)}, \\
\mu_1 (d,m,B) > 0, &\text{if}\quad d > \frac{1}{\l_1 (m,B)}.
\end{cases}$
\item If $m_n \to m$ in $C(\overline{\O} \setminus B)$, then $\mu_1 (d,m_n,B) \to \mu_1 (d,m,B)$ as $n \to \i$.
\end{enumerate}
\end{prop}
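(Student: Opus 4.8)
The proposal is to realize \eqref{MainEig1-2-app} as the eigenvalue problem of the self-adjoint, bounded-below operator $-d\Delta - m$ acting on $L^2(\Omega\setminus\overline{B})$ with form domain $H^1_B(\Omega)$, and to produce the principal eigenpair by the direct method of the calculus of variations. Concretely, I would first show that the Rayleigh functional $J(\phi):=\int_{\Omega\setminus\overline{B}}(d\,|\nabla\phi|^2 - m\phi^2)$ is bounded below on the constraint set $\mathcal{S}:=\{\phi\in H^1_B(\Omega):\|\phi\|_{L^2(\Omega\setminus\overline{B})}=1\}$ (immediate from $m\in L^\infty$) and coercive on $H^1_B(\Omega)$ modulo that constraint. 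A minimizing sequence is then bounded in $H^1$; by the compact embedding $H^1_B(\Omega)\hookrightarrow L^2(\Omega\setminus\overline{B})$, valid under Assumption \ref{assumptionA} through the identification $H^1_B(\Omega)=\{u\in H^1(\Omega):u=0\text{ a.e. in }B\}$ and Rellich--Kondrachov, one extracts a subsequence converging weakly in $H^1$ and strongly in $L^2$; weak lower semicontinuity of the Dirichlet energy together with strong $L^2$ convergence of the zeroth-order term pass $J$ and the constraint $\phi\in\mathcal{S}$ to the limit, yielding a minimizer $\phi_1$ with $J(\phi_1)=\mu_1(d,m,B)$. Its Euler--Lagrange equation is exactly the weak form of \eqref{MainEig1-2-app} with eigenvalue $\mu_1(d,m,B)$, and elliptic regularity upgrades $\phi_1$ to a classical solution in $\Omega\setminus\overline{B}$ satisfying the stated boundary conditions.

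Next I would establish positivity, simplicity, and uniqueness of the principal eigenvalue. Since $J(|\phi_1|)=J(\phi_1)$ we may take $\phi_1\ge 0$; the strong maximum principle (Harnack) then forces $\phi_1>0$ in $\Omega\setminus\overline{B}$, and Hopf's lemma gives $\partial\phi_1/\partial\nu<0$ on the smooth boundary $\partial B$. The same reasoning shows that every eigenfunction associated with $\mu_1(d,m,B)$ is of one sign; combined with the standard observation that for two strictly positive eigenfunctions $\phi,\psi$ a suitable combination $\phi-t\psi$ is either identically $0$ or a sign-changing eigenfunction (impossible), this yields simplicity. For uniqueness of the principal eigenvalue, take any principal pair $(\mu,\phi)$ with $\phi>0$, multiply the equation for $\phi$ by $\phi_1$ and the equation for $\phi_1$ by $\phi$, subtract, and integrate over $\Omega\setminus\overline{B}$: the boundary terms on $\partial\Omega$ vanish by the Neumann condition and those on $\partial B$ vanish because both functions are zero there, leaving $(\mu-\mu_1(d,m,B))\int_{\Omega\setminus\overline{B}}\phi\phi_1=0$ with $\int_{\Omega\setminus\overline{B}}\phi\phi_1>0$, hence $\mu=\mu_1(d,m,B)$. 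Equivalently, this entire paragraph is the Krein--Rutman/Perron--Frobenius statement for the compact resolvent, as in the Neumann problem \eqref{MainEig2-2-app} and in \cite{Weinberger1974,Brown1980}.

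The comparison and continuity properties (i)--(v) then fall out of the variational formula. For (i): using the minimizer $\phi_{d_2}$ of $\mu_1(d_2,m,B)$ as a competitor for $\mu_1(d_1,m,B)$ gives $\mu_1(d_1,m,B)\le\mu_1(d_2,m,B)-(d_2-d_1)\int_{\Omega\setminus\overline{B}}|\nabla\phi_{d_2}|^2$, and $\int_{\Omega\setminus\overline{B}}|\nabla\phi_{d_2}|^2>0$ since a nonzero constant cannot belong to $H^1_B(\Omega)$. For (ii): testing $\mu_1(d,m_2,B)$ against $\phi_{m_1}$ gives $\mu_1(d,m_2,B)\le\mu_1(d,m_1,B)-\int_{\Omega\setminus\overline{B}}(m_2-m_1)\phi_{m_1}^2<\mu_1(d,m_1,B)$ because $\phi_{m_1}>0$ a.e. and $m_2-m_1\gneqq 0$. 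For (iii): if $m\le 0$ then $-m\phi^2\ge 0$, so $\mu_1(d,m,B)\ge d\,\mu_1(1,0,B)=d\inf_{\phi\in\mathcal{S}}\int_{\Omega\setminus\overline{B}}|\nabla\phi|^2>0$, the strict positivity of this mixed Neumann--Dirichlet first eigenvalue being once more the Poincar\'e inequality on $H^1_B(\Omega)$; note that the Dirichlet datum on $\partial B$ makes (iii) sharper than its pure Neumann counterpart, which needs $\int_\Omega h<0$. For (iv): setting $\mu=0$ turns \eqref{MainEig1-2-app} into $\Delta\phi+\tfrac1d m\phi=0$, so by the uniqueness of the positive principal eigenvalue $\lambda_1(m,B)$ of the weighted problem \eqref{MainEig1-1-app} (Proposition \ref{existthm-app}) one has $\mu_1(d,m,B)=0$ precisely when $\tfrac1d=\lambda_1(m,B)$; strict monotonicity in $d$ from (i) then pins the sign of $\mu_1(d,m,B)$ on each side of $d=1/\lambda_1(m,B)$. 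For (v): using the minimizer of each problem as a competitor in the other and $\|\phi\|_{L^2(\Omega\setminus\overline{B})}=1$ gives the Lipschitz bound $|\mu_1(d,m_n,B)-\mu_1(d,m,B)|\le\|m_n-m\|_{L^\infty(\Omega\setminus\overline{B})}$, so $m_n\to m$ in $C(\overline{\Omega}\setminus B)$ forces $\mu_1(d,m_n,B)\to\mu_1(d,m,B)$.

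The only genuinely delicate points are the ones tied to the inner boundary $\partial B$: verifying the compact embedding and the Poincar\'e inequality on $H^1_B(\Omega)$ (which rely on Assumption \ref{assumptionA} and the $H^1$-characterization of $H^1_B(\Omega)$ recorded in Subsection \ref{subsec-assumption}), and invoking Hopf's lemma and the strong maximum principle up to the smooth boundary $\partial B$ for the $L^\infty$-coefficient equation. Everything else is a transcription of Proposition \ref{deglineig-app} and of the abstract theory in \cite{Weinberger1974,Brown1980}, so I do not anticipate any essential new difficulty beyond bookkeeping the mixed boundary conditions.
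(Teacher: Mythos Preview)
Your proposal is correct and complete; the paper itself does not supply a proof of this proposition, treating it (like its Neumann analogue Proposition~\ref{deglineig-app}) as a classical result implicit in the references \cite{Weinberger1974,Brown1980,Ni2001,CantrellCosner2003}. Your sketch via the direct method, strong maximum principle/Hopf for positivity and simplicity, and variational comparison for (i)--(v) is exactly the standard argument one would give, with the correct attention paid to the only nontrivial point, namely that the Dirichlet condition on $\partial B$ supplies the Poincar\'e inequality on $H^1_B(\Omega)$ needed for (iii) and the strict inequality in (i).
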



\section{Uniform bounds of principal eigenfunctions}\label{appendix-uniform-bounds}

Denote by $\mu_{1,c}$ the principal eigenvalue with eigenfunction $\phi_c$ solving problem \eqref{MainEig2-2-app} with\\ $h = \Ind_{\O \setminus B} m - c \Ind_B$ for some $m \in L^\i (\O \setminus B)$, normalized so that $\norm{\phi_c}_{L^2 (\O)} = 1$. We include the following technical lemmas which give some uniform boundedness estimates from above and below on $\phi_{c}$ with respect to $c\gg1$.

\begin{lem}\label{eig-lowerbound}
Given any subset $K \Subset \overline{\O} \setminus \overline{B}$, there holds
\eq{
0 < \inf_{c \gg 1} \inf_{K} \phi_c \leq \sup_{c \gg 1} \sup_{K} \phi_c < \i.
}
\end{lem}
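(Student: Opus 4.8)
The plan is to localize to a neighbourhood of $K$ that stays strictly away from $\overline{B}$, where the singular coefficient $-c\Ind_{B}$ vanishes identically, and then run a uniform‑in‑$c$ elliptic compactness argument built on the eigenvalue convergence already at our disposal. Since $\overline{\Omega}\setminus\overline{B}$ is relatively open in $\overline{\Omega}$ and contains the compact set $K$, one has $\operatorname{dist}(K,\overline{B})>0$; I would fix a relatively open set $U\subset\overline{\Omega}$ with $K\subset U$ and $\overline{U}\cap\overline{B}=\emptyset$, allowing $\overline{U}$ to meet $\partial\Omega$. On $U$ one has $\Ind_{\Omega\setminus B}\equiv 1$ and $\Ind_{B}\equiv 0$, so $\phi_{c}$ solves the genuinely linear problem
\[
-d\Delta\phi_{c}=(m+\mu_{1,c})\phi_{c}\quad\text{in }U,\qquad \frac{\partial\phi_{c}}{\partial\nu}=0\quad\text{on }\partial U\cap\partial\Omega,
\]
whose structure carries no $c$-dependence beyond the scalar $\mu_{1,c}$. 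Moreover $\mu_{1,c}$ is bounded uniformly in $c\gg 1$, since $\lim_{c\to\infty}\mu_{1,c}$ exists by a minor modification of Theorem \ref{convergence-thm-2} (that argument uses only $m\in L^{\infty}(\Omega\setminus B)$).

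Next I would extract uniform regularity near $K$. The normalization $\norm{\phi_{c}}_{L^{2}(\Omega)}=1$ gives $\norm{(m+\mu_{1,c})\phi_{c}}_{L^{2}(U)}\le C$ uniformly in $c\gg 1$, so interior and (Neumann‑)boundary $L^{2}$-estimates for $-\Delta$ yield $\norm{\phi_{c}}_{W^{2,2}(U_{1})}\le C$ on a slightly smaller $U_{1}$ with $K\subset U_{1}\Subset U$. Bootstrapping finitely many times—at each step shrinking the domain slightly and using the Sobolev embedding to improve the integrability of the right‑hand side—upgrades this to $\norm{\phi_{c}}_{W^{2,p}(U_{2})}\le C$ for some $p>N$ and some $U_{2}$ with $K\subset U_{2}\Subset U$, hence $\norm{\phi_{c}}_{C^{1,\beta}(\overline{U_{2}})}\le C$ for some $\beta\in(0,1)$, all uniformly in $c\gg 1$. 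By the Arzel\`a--Ascoli theorem, every sequence $c_{n}\to\infty$ has a subsequence along which $\phi_{c_{n}}$ converges in $C^{1}(\overline{U_{2}})$. On the other hand, the modification of Theorem \ref{convergence-thm-2} also gives $\phi_{c}\to\phi_{\infty}$ in $H^{1}(\Omega)$, where $\phi_{\infty}$ is the positive principal eigenfunction of the destruction eigenvalue problem \eqref{MainEig1-2-app} with weight $m$, normalized by $\norm{\phi_{\infty}}_{L^{2}(\Omega\setminus\overline{B})}=1$; in particular $\phi_{c}\to\phi_{\infty}$ in $L^{2}(U_{2})$, so any $C^{1}(\overline{U_{2}})$-subsequential limit must coincide with $\phi_{\infty}|_{\overline{U_{2}}}$. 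Being independent of the subsequence, the full family converges: $\phi_{c}\to\phi_{\infty}$ in $C^{1}(\overline{U_{2}})$, and in particular uniformly on $K$.

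Finally, $\phi_{\infty}>0$ throughout $\overline{\Omega}\setminus\overline{B}$—it is a positive principal eigenfunction, positive up to $\partial\Omega$ by Hopf's lemma and the homogeneous Neumann condition there—while $K$ is a compact subset of $\overline{\Omega}\setminus\overline{B}$, so $0<\min_{K}\phi_{\infty}\le\max_{K}\phi_{\infty}<\infty$. Uniform convergence on $K$ then gives, for all $c$ sufficiently large, $\tfrac{1}{2}\min_{K}\phi_{\infty}\le\phi_{c}\le 2\max_{K}\phi_{\infty}$ on $K$, which is precisely the assertion of the lemma. The step I expect to be the main obstacle is ensuring the $c$-uniformity of the elliptic estimates; this is exactly why $K$ and the auxiliary sets $U,U_{1},U_{2}$ must avoid $\overline{B}$, so that the unbounded term $-c\Ind_{B}$ drops out on $U$ and the local problem becomes a linear equation whose coefficients are bounded independently of $c$. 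Once this localization is in place the remainder is a routine compactness‑and‑bootstrap scheme, and the nontrivial lower bound is delivered for free by the positivity of the limit $\phi_{\infty}$ (Proposition \ref{deglineig-1-app}).
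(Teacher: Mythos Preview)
Your proposal is correct and follows essentially the same route as the paper: localize away from $\overline{B}$ so the coefficients are $c$-independent, bootstrap elliptic $L^{p}$-estimates (using the boundedness of $\mu_{1,c}$) to obtain uniform $C^{1,\beta}$ bounds, and then combine Arzel\`a--Ascoli with the $H^{1}(\Omega)$ convergence $\phi_{c}\to\phi_{\infty}$ to upgrade to uniform convergence on $K$, after which the positivity of $\phi_{\infty}$ on $K$ finishes the job. If anything, your version is slightly more careful than the paper's in making the nested neighbourhoods $U\supset U_{1}\supset U_{2}\supset K$ explicit and in allowing $K$ to meet $\partial\Omega$ via Neumann boundary estimates.
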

\begin{proof}
By a slight modification of the proof of Theorem \ref{convergence-thm-2}, 
\begin{equation}\label{limit-appendix}
\lim_{c\to\infty}\phi_c = \phi_\i\quad\text{in}\quad H ^1 (\O),    
\end{equation}
where $\phi_\i \in H_B ^1 (\O)$ is the first eigenfunction solving problem \eqref{MainEig1-2-app} normalized so that $\norm{\phi_\i}_{L^2 (\O \setminus B)} = 1$. Since $m \in L^\i (\O \setminus B)$ and $\phi_\i = 0$ on $\p B$ (in the sense of the trace), $L^p$-theory of elliptic equations guarantees that $\phi_\i \in C( \overline{\O} \setminus B)$.

Without loss of generality, we may assume $K$ has a smooth boundary. Then, from standard $L^2$-theory of elliptic equations (see, e.g., \cite[Chapter 2.2.2]{Wu2006}), $\{\phi_c \}_{c \gg 1}$ is bounded in $H^2 (K)$. Applying the usual bootstrapping arguments via $L^p$-estimates for elliptic equations (see, e.g., \cite[Ch. 9.4]{SOB2}), we have that in fact $\{ \phi_c \}_{c \gg 1}$ is bounded in $W^{2,p} (K)$ for any $p \geq 1$, since $m \in L^\i (\O \setminus B)$ does not depend on $c$. By the Sobolev embedding, $\{ \phi_c \}_{c \gg 1}$ is bounded in $C^{1,\a} (\overline{K})$ for some $\a \in (0,1)$, and so, $\lim_{c\to\infty}\phi_c = \phi_\i$ in $C (\overline{K})$ thanks to the Arzel\`a-Ascoli theorem and \eqref{limit-appendix}. Since $0 <\inf_{K} \phi_c \leq \sup_{K} \phi_c < \i$, the conclusion of the lemma follows.
\end{proof}

\begin{lem}\label{eig-upperbound}
There holds $\sup_{c \gg 1} \sup_{\O} \phi_c < \i$.
\end{lem}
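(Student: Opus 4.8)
\emph{Proof plan.} The plan is to observe that the only potentially dangerous term in the eigenequation, $-c\,\Ind_{B}\phi_{c}$, has a sign that \emph{helps}: it turns $\phi_{c}$ into a nonnegative subsolution of a single fixed linear elliptic problem on all of $\O$, to which a Moser/De Giorgi--Nash local boundedness estimate applies with constant independent of $c$.

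Concretely, I would first record that $\sup_{c\gg1}|\mu_{1,c}|<\infty$; this follows from the slight modification of the proof of Theorem~\ref{convergence-thm-2} already used in the proof of Lemma~\ref{eig-lowerbound}, since $c\mapsto\mu_{1,c}$ is increasing and bounded above by the corresponding destruction eigenvalue. Set $C:=\|m\|_{L^{\infty}(\O\setminus B)}+\sup_{c\gg1}|\mu_{1,c}|\ge0$. Since $\Ind_{\O\setminus B}m-c\Ind_{B}+\mu_{1,c}\le C$ a.e.\ in $\O$ for all $c\gg1$ (the term $-c\Ind_{B}$ is nonpositive), and $\phi_{c}>0$ solves \eqref{MainEig2-2-app} with $h=\Ind_{\O\setminus B}m-c\Ind_{B}$ weakly in $H^{1}(\O)$, testing the weak formulation against nonnegative $v\in H^{1}(\O)$ gives
$$
\int_{\O}d\,\grad\phi_{c}\cdot\grad v=\int_{\O}\big(\Ind_{\O\setminus B}m-c\Ind_{B}+\mu_{1,c}\big)\phi_{c}\,v\;\le\;C\int_{\O}\phi_{c}\,v,
$$
that is, $\phi_{c}$ is a nonnegative $H^{1}$-subsolution of $-d\D w\le Cw$ in $\O$ subject to the homogeneous Neumann condition on $\p\O$, \emph{uniformly} in $c\gg1$.

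The second step is to apply the interior local boundedness estimate for subsolutions (Moser iteration; see e.g.\ \cite[Ch.~8]{SOB2} or \cite[Ch.~2]{Wu2006}), together with its boundary analogue near the smooth Neumann boundary $\p\O$ (reduce to the interior case by flattening $\p\O$ and reflecting evenly), covering $\overline{\O}$ by finitely many balls. As the zeroth-order coefficient is bounded by the $c$-independent constant $C$, the resulting estimate reads $\sup_{\O}\phi_{c}\le C'\|\phi_{c}\|_{L^{2}(\O)}$ with $C'=C'(N,d,\O,C)$ independent of $c$; the normalization $\|\phi_{c}\|_{L^{2}(\O)}=1$ then yields $\sup_{c\gg1}\sup_{\O}\phi_{c}\le C'<\infty$.

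I expect the only genuinely delicate point to be bookkeeping the uniformity in $c$: the argument hinges on discarding $-c\Ind_{B}$ using its sign so that the effective potential stays bounded independently of $c$, after which the standard subsolution estimate does the rest. (As an aside, on $B$ one has $d\D\phi_{c}=(c-\mu_{1,c})\phi_{c}\ge0$ for $c\gg1$, so $\phi_{c}$ is subharmonic there and attains its maximum over $\overline{B}$ on $\p B$, which localizes the difficulty to a one-sided neighborhood of $\p B$; but the global subsolution estimate above already handles all of $\O$ at once, so this observation is not strictly needed.)
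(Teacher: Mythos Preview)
Your proposal is correct and rests on exactly the same key observation as the paper's proof: the term $-c\Ind_{B}\phi_{c}$ has the favorable sign and can simply be discarded, after which Moser iteration with $c$-independent coefficients yields the uniform $L^{\infty}$ bound in terms of the $L^{2}$ norm. The only difference is in packaging: the paper carries out the Moser iteration explicitly (choosing cutoff functions, testing against $\eta_{k}^{2}\phi_{c}^{2\alpha_{k}-1}$, tracking the constants $C_{k}$ through the iteration, and verifying $\prod C_{k}<\infty$), whereas you invoke the local boundedness theorem for nonnegative subsolutions of $-d\Delta w\le Cw$ as a black box and handle $\partial\O$ by the standard flatten-and-reflect reduction. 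Your route is more concise and arguably cleaner; the paper's explicit computation has the modest advantage of being self-contained and making the uniformity in $c$ visible at each step, but both arguments are the same in substance.
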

\begin{proof}
From Lemma \ref{eig-lowerbound}, we see that $\{ \phi_c \}_{c \gg 1}$ is uniformly bounded from above for any $K \Subset \overline{\O} \setminus \overline{B}$. The delicacy in this case comes in deriving a uniform upper bound on $\phi_c$ in a neighbourhood of $B$. Unlike the previous case, we cannot apply the same $L^p$ style arguments since $h = \Ind_{\O \setminus B} m - c \Ind_B$ becomes unbounded in $L^p (\O)$ as $c\to\infty$ for any $p \geq 1$. For this reason we appeal to an application of the Moser iteration technique. To this end, we seek to obtain a bound of the form
\eql{
\norm{\phi_c}_{L^{2N^{k}/(N-2)^{k}} (B_{R_{k+1}} (x_0))} \leq C_k 
\norm{\phi_c}_{L^{2N^{k-1}/(N-2)^{k-1}}(B_{R_{k}} (x_0))},
}{t1.0}
for some constants $C_k$ such that their product $\prod_{n=1}^\i C_n$ is bounded independent of $c\gg1$, and $B_{R_{k+1}} (x_0) \Subset B_{R_k} (x_0)$ concentric balls of particular radii $R_k$ defined below. In the above estimate, $N\geq3$ is the spatial dimension. The cases $N=1,2$ are simpler and the details are omitted.

\paragraph{\bf Step 1} By Theorem \ref{convergence-thm-2}, $\{\mu_{1,c}\}_{c\gg1}$ is bounded and $\lim_{c\to\infty}\phi_c = \phi_\i$ in $H^{1}(\Omega)$ for some $\phi_\i \in H_B ^1 (\O)$, considered as an element in $H^{1}(\Omega)$ by zero extension. 

Since $B \Subset \O$, there is $R>0$ such that the $R$-neighbourhood of $B$ is compactly contained in $\Omega$. Fix an arbitrary point $x_0 \in \overline{B}$. Then, $B_R (x_0) \Subset \O$. We drop the dependence on $x_0$ moving forward for notational brevity. Choose a cutoff function $\eta \in C_0 ^\i (B_R)$ so that $0 \leq \eta \leq 1$ in $B_R$, $\eta = 1$ in $B_{R(1-1/N)}$, and $\as{\grad \eta} \leq 4 N^2 / R(N-2)$. Multiplying the equation for $\phi_c$ by $\eta^2 \phi_c$ and integrating by parts yields
\eql{
d \int_{B_R} \magg{\grad \phi_c} \eta^2 &\leq 2 d \int_{B_R} \eta \as{\grad \phi_c} \as{\grad \eta} \phi_c + \int_{B_R} \eta^2 ( \Ind_{\O \setminus B} m - c \Ind_B + \mu_{1,c} ) \phi_c ^2.
}{t1.1}
Applying Young's inequality to the first term on the right hand side of \eqref{t1.1} side yields
\eq{
2d \int_{B_R} \eta \as{\grad \phi_c} \as{\grad \eta} \phi_c &\leq \frac{d}{2} \int_{B_R} \eta^2 \magg{\grad \phi_c} + 2d \int_{B_R} \phi_c^2 \magg{\grad \eta} \\
&\leq \frac{d}{2} \int_{B_R} \eta^2 \magg{\grad \phi_c} + 2d \left( \frac{4N^2}{R(N-2)} \right)^2 \int_{B_R} \phi_c^2 .
}
Combining this with \eqref{t1.1}, using the boundedness of $m$, $\{\mu_{1,c}\}_{c\gg1}$, and dropping the negative term, we are left with
\begin{equation}\label{t1.t1}
    \frac{d}{2} \int_{B_R} \magg{\grad \phi_c} \eta^2 \leq \left( 2d \left( \frac{4N^2}{R(N-2)} \right)^2 + \norm{m}_{L^\i (\O \setminus B)} + \as{\mu_{1,c}} \right) \int_{B_R } \phi_c^2 \leq C_0 \int_{B_R} \phi_c ^2 .
\end{equation}
Since $\eta \phi_c \in H_0 ^1 (B_R)$, the Sobolev inequality and Poincar\'e's inequality yields
\eq{
\norm{\eta \phi_c}_{L^{2N/(N-2)} (B_R)} &\leq C \norm{\eta \phi_c}_{H^1 (B_R)} \leq C \norm{\grad ( \eta \phi_c )}_{L^2 (B_R)} ,
}
where $C$ may change between inequalities but does not depend on $c\gg1$. Using the fact that $\grad ( \eta \phi_c ) = \grad \eta  \phi_c + \grad \phi_c \eta$ paired with the estimate \eqref{t1.t1}, we see that
\eq{
\frac{d}{4}\norm{\grad (\eta \phi_c)}_{L^2 (B_R)}^2 &\leq \frac{d}{2} \int_{B_R} ( \magg{\grad \eta} \phi_c^2 + \magg{\grad \phi_c} \eta^2 )   \\
&\leq \frac{d}{2} \left( \frac{4N^2}{R(N-2)} \right)^2 \int_{B_R} \phi_c ^2 + C_0 \int_{B_R} \phi_c ^2 \leq C_0 \norm{\phi_c}_{L^2 (B_R)}^2,
}
where again $C_0$ may change from line to line but remains independent of $c\gg1$. Finally, using the fact that $\eta = 1$ in $B_{R(1-1/N)}$ we obtain the estimate
\eql{
\norm{\phi_c}_{L^{2N/(N-2)} (B_{R(1-1/N)})} &\leq C_1 \norm{\phi_c}_{L^2 (B_R)} ,
}{step1}
where $C_1$ depends on all quantities thus far but can be chosen independent of $c\gg1$. 

\paragraph{\bf Step 2} We now show the induction step. Set $\a_k = (N/(N-2))^{k-1}$ for integer $k \geq 1$ and consider the sequence of radii $R_{k} = \frac{R}{2}( 1 + \a_k ^{-1})$ so that $R_1 = R$ and $R_\i := \lim_{k \to \i} R_k = R/2$. Note that we have established \eqref{t1.0} for $k=1$ (namely, \eqref{step1}), where $C_1$ is as defined above. Then, we consider a sequence of cutoff functions $\eta_k \in C_0 ^\i (B_{R_k})$ so that $0 \leq \eta_k \leq 1$, $\eta_k = 1$ in $B_{R_{k+1}}$, and $\as{\grad \eta_k} \leq 4/(R_{k} - R_{k+1}) = 4 N \a_k / R$. Multiplying the equation for $\phi_c$ by $\eta^2 \phi_c ^{2\a_k -1}$, integrating by parts and throwing away negative terms yields
\eql{
\frac{d(2 \a_k - 1)}{\a_k ^2} \int_{B_{R_k}} \magg{\grad \phi_c ^{\a_k}} \eta_k^2 &\leq 2 d \int_{B_{R_k} } \eta_k \as{\grad \phi_c} \as{\grad \eta_k} \phi_c ^{2 \a_k - 1} \nn \\
&\quad+ \left( \norm{m}_{L^\i (\O \setminus B)} + \as{\mu_{1,c}} \right) \int_{B_{R_k} } \phi_c ^2 .
}{t1.4}
We again control the first term on the right hand side via Young's inequality and absorb into the left hand side. To this end, we compute
\eq{
2 d \int_{B_{R_k}} \eta_k \as{\grad \phi_c} \phi_c ^{2 \a_k - 1} \as{\grad \eta_k} &= \frac{2 d}{\a_k} \int_{B_{R_k} } \eta_k \as{\grad \phi_c ^{\a_k}} \phi_c ^{\a_k} \as{\grad \eta_k}  \\
\leq \frac{d (2 \a_k - 1)}{2 \a_k ^2}& \int_{B_{R_k} } \magg{\grad \phi_c ^{\a_k}} \eta_k ^2 + \frac{2d}{2 \a_k - 1} \int_{B_{R_k} } \phi_c ^{2 \a_k} \magg{\grad \eta_k} \\
\leq \frac{d (2 \a_k - 1)}{2 \a_k ^2} &\int_{B_{R_k} } \magg{\grad \phi_c ^{\a_k}} \eta_k ^2 + \frac{32 d N^2 \a_k ^2}{R^2 (2 \a_k -1)} \int_{B_{R_k} } \phi_c ^{2 \a_k } .
}
Combining this result with \eqref{t1.4} leaves
\eql{
\frac{d (2\a_k -1)}{2 \a_k ^2} \int_{B_{R_k}} \magg{\grad \phi_c ^{\a_k}} \eta_k ^2 &\leq \left( \norm{m}_{L^\i (\O \setminus B)} + \as{\mu_{1,c}} + \frac{32 d N^2 \a_k ^2}{R^2 (2 \a_k -1)} \right) \norm{\phi_c ^{\a _k}}_{L^2 (B_{R_k})} ^2.
}{t1.7}
Notice again that $\eta_k \phi_c ^{\a_k}$ belongs to $H_0 ^1 (B_{R_k})$. Therefore, applying the Sobolev inequality, Poincar\'e's inequality and the fact that $\grad (\eta_k \phi_c ^{\a_k}) = \grad \eta_k \phi_c ^{\a_k} + \eta_k \grad ( \phi_c ^{\a_k})$ gives us that
\eq{
\frac{d (2 \a_k - 1)}{4 \a_k ^2} \norm{\eta_k \phi_c ^{\a_{k}}}^2_{L^{2N/(N-2)} (B_{R_{k}})} &\leq \frac{d (2 \a_k - 1)}{2 \a_k ^2} \int_{B_{R_k}} \left( \magg{\grad \eta_k} \phi_c ^{2 \a_k} + \magg{\grad \phi_c ^{\a_{k}}} \eta_k ^2 \right),
}
and so combining this estimate with \eqref{t1.7} and using that $\eta_k \equiv 1$ in $B_{R_{k+1}}$ yields
\eql{
&\tfrac{d (2 \a_k - 1)}{4 \a_k ^2} \norm{\phi_c ^{\a_k}}^2_{L^{2N/(N-2)} (B_{R_{k+1}})} \leq \nonumber \\
& \left( \norm{m}_{L^\i (\O \setminus B)} + \as{\mu_{1,c}} + \tfrac{32 d N^2 \a_k ^2}{R^2 (2 \a_k - 1)} + \tfrac{8 d N^2 (2 \a_k - 1)}{R^2}  \right) \norm{\phi_c^{\a_k}}^2_{L^2 (B_{R_k})}.
}{eq1111111}
An elementary manipulation gives that
\eql{
\norm{\phi_c ^{\a_k}}_{L^{2N/(N-2)} (B_{R_{k+1}} )}= \norm{\phi_c}_{L^{2\a_{k+1}} (B_{R_{k+1}} )} ^{\a_k},\quad\norm{\phi_c ^{\a_k}}_{L^2 (B_{R_k} )}= \norm{\phi_c}_{L^{2\a_{k}} (B_{R_k} )} ^{\a_k} .
}{eq2222222222}
Finally, rearranging \eqref{eq1111111} and using \eqref{eq2222222222} we obtain the final estimate
\eq{
\norm{\phi_c}_{L^{2\a_{k+1}} (B_{R_{k+1}} )} &\leq C_k \norm{\phi_c}_{L^{2 \a_k} (B_{R_k} )},
}
where $C_k$ is a constant depending on all quantities used throughout this procedure but can be chosen independent of $c$, and is dominated by a term of order $(\a_k ^4/ (2 \a_k -1)^2)^{1/2\a_k} \sim ( \a_k )^{1/\a_k}$ for $k$ large.

\paragraph{\bf Step 3} We complete the limiting process. The uniformity in $c$ is clear; on the other hand, upon iteration we find that
\eql{
\norm{\phi_c}_{L^{2N^k / (N-2)^k} (B_{R_{k+1}} )} &\leq \prod_{n=1}^k C_n \norm{\phi_c}_{L^2 (B_{R_k} )} ,
}{t.18}
and so we now ensure that the product of the constants $C_k$ are bounded. First, note that there exists a constant $A$ depending on $\norm{m}_{L^\i (\O)}, \as{\mu_{1,c}}, d, N, R$ but independent of $c,k$ so that
\eq{
C_k \leq \left( A \a_k \right)^{1/{\a_k}}.
}
Then, we use the fact that $\prod_{n=1}^\i C_n < \i \iff \sum_{n=1}^\i \log (C_n) < \i$. Using the bound above and some elementary calculation, we see that
\eq{
\sum_{n=1}^\i \log (C_n) &\leq \sum_{n=1}^\i \frac{(n-1) \log (A^{1/(n-1)} \s )}{\s ^{n-1}} < \i,
}
where $\s = N/(N-2) > 1$ ensures the convergence. Thus, \eqref{t.18} is bounded, and taking $k \to \i$ yields
\eq{
\norm{\phi_c}_{L^\i (B_{R/2} (x_0))} &\leq M \norm{\phi_c}_{L^2 (B_R (x_0))} .
}
Since $x_0 \in \overline{B}$ was arbitrary, we have that $\phi_c$ is uniformly bounded on some set $B^\prime$ such that $B \Subset B^\prime$. Combining this with Lemma \ref{eig-lowerbound}, we conclude that $\sup_{c \gg 1} \sup_{\O} \phi_c$ is bounded.
\end{proof}

\bibliographystyle{siamplain}
\bibliography{ex_article}

\end{document}